\numberwithin{equation}{section}
\newtheorem{definition}{Definition}
\newtheorem{proposition}{Proposition}
\newtheorem{theorem}{Theorem}
\newtheorem{lemma}{Lemma}
\newtheorem{corollary}{Corollary}
\newtheorem{remark}{Remark}
\newtheorem{example}{Example}
\newtheorem*{thma}{Theorem A}
\newtheorem*{thmb}{Theorem B}
\newtheorem*{thmc}{Theorem C}
\newcommand{\ra}{\rightarrow}
\newcommand{\mc}{\mathbb{C}}
\newcommand{\SL}{\operatorname*{SL}}
\newcommand{\Gr}{{\operatorname*{Gr}}}
\newcommand{\sspan}{\operatorname*{span}}
\newcommand{\id}{\operatorname*{id}}
\newcommand{\pr}{{\operatorname*{pr}}}
\newcommand{\msl}{\mathfrak{sl}}
\newcommand{\cO}{\mathcal O}
\newcommand{\bP}{{\mathbb P}}
\newcommand{\bd}{{\mathbf d}}
\newcommand{\be}{{\mathbf e}}
\newcommand{\bC}{{\mathbb C}}
\newcommand{\fa}{{\mathfrak a}}
\newcommand{\hk}{\hookrightarrow}
\newcommand{\bs}{{\mathbf s}}
\newcommand{\bG}{{\mathbb G}}
\newcommand{\la}{\lambda}
\newcommand{\om}{\omega}
\newcommand{\bE}{{\mathbf E}}
\newcommand{\fb}{{\mathfrak b}}
\newcommand{\fn}{{\mathfrak n}}
\newcommand{\fg}{{\mathfrak g}}
\newcommand{\bZ}{{\mathbb Z}}
\newcommand{\T}{\otimes}
\newcommand{\ZZ}{\mathbb{Z}}
\begin{document}

\title[Linear degenerations of flag varieties]
{Linear degenerations of flag varieties: partial flags, defining equations, and group actions}

\author{G. Cerulli Irelli}
\address{Giovanni Cerulli Irelli:\newline
Dipartimento  S.B.A.I.,  Sapienza Universit\'a di Roma, Via Scarpa 10, 00161, Roma, Italy}
\email{giovanni.cerulliirelli@uniroma1.it} 
\author{X. Fang} 
\address{Xin Fang:\newline
University of Cologne, Mathematical Institute, Weyertal 86--90, 50931 Cologne, Germany}
\email{xinfang.math@gmail.com}
\author{E. Feigin} 
\address{Evgeny Feigin:\newline National Research University Higher School of Economics, Department of Mathematics, 
Usacheva str. 6, 119048, Moscow, Russia, {\it and } 
Skolkovo Institute of Science and Technology, Skolkovo Innovation Center, Building 3,
Moscow 143026, Russia}
\email{evgfeig@gmail.com}
\author{G. Fourier} 
\address{Ghislain Fourier:\newline
RWTH Aachen University, Pontdriesch 10--16, 52062 Aachen}
\email{fourier@mathb.rwth-aachen.de}
\author{M. Reineke}
\address{Markus Reineke:\newline
Ruhr-Universit\"at Bochum, Faculty of Mathematics, Universit\"atsstra{\ss}e 150, 44780 Bochum, Germany}
\email{markus.reineke@rub.de}

\begin{abstract}

We continue, generalize and expand our study of linear degenerations of flag varieties from \cite{CFFFR}. We realize partial flag varieties as quiver Grassmannians for equi-oriented type A quivers and construct linear degenerations by varying the corresponding quiver representation.  We prove that there exists the deepest flat degeneration and the deepest flat irreducible degeneration: the former 
is the partial analogue of the mf-degenerate flag variety and the latter coincides with the partial PBW-degenerate
flag variety. We compute the generating function of the number of orbits in the flat irreducible locus
and study the natural family of line bundles on the degenerations from the flat irreducible locus. We also describe 
explicitly the reduced scheme structure on these degenerations and conjecture that similar results hold for the whole flat locus.
Finally, we prove an analogue of the Borel-Weil theorem for the flat irreducible locus.      
\end{abstract}
\maketitle

\section{Introduction}
The theory of complex simple Lie groups and Lie algebras is known to be closely related to the representation theory of Dynkin quivers
(see e.g. \cite{AdF,CFR2,G,R_Mon}). In this paper we use the following simple but powerful observation: any partial flag variety 
associated to the group $\mathrm{SL}_N$ is isomorphic to a quiver Grassmannian for the equi-oriented type $A$ quiver and suitably chosen 
representation and dimension vector. Varying the representation of the quiver and keeping the dimension vector fixed
one gets degenerations of the flag varieties (see e.g. \cite{CFFFR,CFR1,CFRSchubert,CL}). The goal of this paper is to study 
these degenerations, in particular, to describe the irreducible and flat irreducible loci. Let us formulate
the setup and our results in more details.
 
Let $G=\mathrm{SL}_N(\mathbb{C})$ and let $P$ be a parabolic subgroup of $G$ with respect to the fixed Borel subgroup $B$.
The quotient $G/P$ is known to be isomorphic to the variety of flags $(U_1\subset U_2\subset\dots\subset U_n)$
in an $N$-dimensional vector space such that $\dim U_i=e_i$ for a certain increasing sequence $1\le e_1<\dots <e_n\leq N$.  

Let $Q$ be the equi-oriented quiver of type $A_n$ with the set of vertices $Q_0=\{1,2,\cdots,n\}$ where $n$ is the sink. We fix $N\geq n+1$ and a complex vector space $V$ of dimension $N$. 
We consider the dimension vector $\mathbf{d}=(N,\dots,N)$ and denote by $R_\mathbf{d}$ the affine space whose points parametrize the 
$Q$--representations of dimension vector $\mathbf{d}$, i.e. collections $\{(f_i)_{i=1}^{n-1}\}$ of linear endomorphisms of $V$. 
The group $G_\mathbf{d}=\prod_{i=1}^n \mathrm{GL}_N$ acts on $R_\mathbf{d}$ by base change and the $G_\mathbf{d}$-orbits get identified with 
the isomorphism classes of quiver representations. It is known that there are only finitely many orbits, parametrized by the 
collections $(r_{i,j})_{1\le i<j\le n}$ of the ranks
of the composite maps. A general point of $R_\mathbf{d}$ is isomorphic to $M^0:=(\id_V,\cdots, \id_V)$.  For a point 
$M=(f_i)_{i=1}^{n-1}\in R_\mathbf{d}$ we denote by $\mathbf{r}^M=(r_{i,j}^M)$ the rank collection $r_{i,j}^M=\mathrm{rank}(f_{j-1}\circ\cdots\circ f_i)$. In particular, if $M=M^0$, then $r_{i,j}^{M}=N$ 
for all pairs $i,j$ and we denote this collection by $\mathbf{r}^0$.  

We fix a dimension vector $\mathbf{e}=(e_i)_{i=1}^n$ such that $1\le e_1<\dots <e_n\leq N$ and consider 
the proper family $\pi:Y_\mathbf{e}\rightarrow R_\mathbf{d}$ whose fiber over a point $M$ is the quiver Grassmannian 
$\Gr_\mathbf{e}(M)$.  Our goal is to study geometric properties of this family.

Two simple observations are in order. The first observation is that a general fibre of this family is isomorphic to 
$G/P$, thus the special fibres can be viewed as degenerations of the partial flag varieties. 
The second observation is as follows. The map $\pi$ is $G_\mathbf{d}$-equivariant and the quiver Grassmannians corresponding to the points from one $G_\mathbf{d}$-orbit
are isomorphic.  We denote by $\mathcal{O}_{{\mathbf r}}$ the $G_\mathbf{d}$-orbit corresponding to the tuple ${\mathbf r}$. 
The main message of our paper is that there exist two other rank collections $\mathbf{r}^1$ and $\mathbf{r}^2$:
\begin{gather}\label{Eq:r1}
r^1_{i,j}=N-e_j+e_i,\ 1\le i<j\le n;\\\label{Eq:r2}
r^2_{i,j}=N-1-e_j+e_i,\ 1\le i<j\le n, 
\end{gather}
which are as fundamental as the tuple $\mathbf{r}^0$. In particular, the rank collection $\mathbf{r}^1$ corresponds to
the PBW degenerate flag variety \cite{Feigin1,FFFM,Fou1}.  We provide here some details. 

The partial flag varieties $G/P$ are known to be irreducible and have easily computed dimensions.
There are two natural loci in $R_\mathbf{d}$. The first one is the flat locus $U_{flat}$ which is the locus where the map 
$\pi$ is flat. In other words, $U_{flat}$ consists of representations $M$ such that the quiver Grassmannian  $\Gr_\mathbf{e}(M)$
is of expected (minimal possible) dimension $\dim G/P$. The second natural locus is the flat irreducible locus 
$U_{flat,irr}\subset U_{flat}$ consisting of $M$ such that $\Gr_\mathbf{e}(M)$ is irreducible.   
Here is our first theorem which generalizes \cite[Theorem~3]{CFFFR}.

\begin{thma}\label{Thm:A} The following holds:
\begin{itemize}\item[a)] The flat irreducible locus $U_{flat,irr}$ consists of the orbits  $\mathcal{O}_{{\mathbf r}}$ 
degenerating to $\mathcal{O}_{{\mathbf r^1}}$, i.e. $r_{i,j}\ge r^1_{i,j}$ for all pairs $i,j$.
\item[b)] The flat locus $U_{flat}$ consists of the orbits  $\mathcal{O}_{{\mathbf r}}$ 
degenerating to $\mathcal{O}_{{\mathbf r^2}}$, i.e. $r_{i,j}\ge r^2_{i,j}$ for all pairs $i,j$.
\end{itemize}
\end{thma}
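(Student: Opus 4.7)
The plan is to exploit that both $U_{flat}$ and $U_{flat,irr}$ are open and $G_{\mathbf{d}}$-invariant subsets of $R_{\mathbf{d}}$, hence unions of orbits that are upward closed in the degeneration order: openness of the flat locus of a proper morphism is standard, and within the flat locus irreducibility of fibres is open by upper semi-continuity of the number of irreducible components. Since the orbit closure relation on $R_{\mathbf{d}}$ is precisely the componentwise order on the rank tuples $(r_{i,j})_{i<j}$, the statement reduces to showing that $\mathcal{O}_{\mathbf{r}^1}$ and $\mathcal{O}_{\mathbf{r}^2}$ are the unique minimal orbits in $U_{flat,irr}$ and $U_{flat}$ respectively.

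First I would construct explicit representatives $M^1\in\mathcal{O}_{\mathbf{r}^1}$ and $M^2\in\mathcal{O}_{\mathbf{r}^2}$ as suitable direct sums of the indecomposable representations of $Q$, and verify their rank collections match \eqref{Eq:r1} and \eqref{Eq:r2}. By construction $M^1$ realizes the partial PBW-degenerate flag variety, so that $\Gr_{\mathbf{e}}(M^1)$ is irreducible of dimension $\dim G/P$ by \cite{Feigin1,FFFM,Fou1}. For $M^2$ I would compute $\dim\Gr_{\mathbf{e}}(M^2)=\dim G/P$ directly by stratifying the quiver Grassmannian by subrepresentation types, which is well behaved for the equi-oriented type $A$ quiver, and isolating the top-dimensional stratum; this places $M^2$ in the flat locus even though $\Gr_{\mathbf{e}}(M^2)$ will typically be reducible.

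With the two critical orbits identified, upper semi-continuity of fibre dimension for the proper map $\pi$, together with the universal lower bound $\dim\Gr_{\mathbf{e}}(M)\ge\dim G/P$, immediately gives $\dim\Gr_{\mathbf{e}}(M)=\dim G/P$ for every $M$ with $\mathbf{r}^M\ge\mathbf{r}^2$; flatness then follows from equidimensionality together with constancy of the Hilbert polynomial. This yields the forward inclusion in (b). The forward inclusion in (a) additionally requires irreducibility of $\Gr_{\mathbf{e}}(M)$ when $\mathbf{r}^M\ge\mathbf{r}^1$; here I would argue that the subrepresentation stratification has a unique top-dimensional stratum which is itself irreducible, since it fibres over a product of ordinary Grassmannians with affine-space fibres in this range of ranks.

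The reverse inclusions are the main obstacle. By upper semi-continuity it suffices to check the orbits $\mathcal{O}_{\mathbf{r}}$ that are maximal among those with $\mathbf{r}\not\ge\mathbf{r}^2$ (respectively $\mathbf{r}\not\ge\mathbf{r}^1$); these are obtained by lowering a single entry of $\mathbf{r}^2$ or $\mathbf{r}^1$ by one in an admissible way. For each such orbit I would need to exhibit a subrepresentation type whose stratum has dimension strictly greater than $\dim G/P$, breaking flatness, or produces a second top-dimensional component, breaking irreducibility. The hard part will be precisely this converse direction: enumerating the admissible single rank drops compatible with the rank-tuple inequalities, uniformly bounding the stratum dimensions in terms of the rank collection, and producing explicit witnesses for the two failures in a way that is uniform in both $\mathbf{e}$ and $N$.
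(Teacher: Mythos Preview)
Your overall architecture matches the paper's: reduce to identifying the unique minimal orbits in $U_{flat,irr}$ and $U_{flat}$, verify $M^1$ and $M^2$ belong to the respective loci, and then treat the converse by analysing the maximal orbits lying outside. The forward inclusions are handled essentially as you sketch (the paper uses the stratification criterion of Theorem~\ref{tc} rather than citing the PBW literature for $M^1$, but that is cosmetic).

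The genuine gap is in your description of the converse. The orbits that are maximal among those with $\mathbf r\not\ge\mathbf r^1$ (resp.\ $\mathbf r\not\ge\mathbf r^2$) are \emph{not} obtained by lowering a single entry of $\mathbf r^1$ or $\mathbf r^2$; such a tuple is far from maximal in the degeneration order. The maximal bad orbits are instead close to the \emph{generic} orbit $\mathcal O_{\mathbf r^0}$: they are the orbits of the explicit representations $M(\mathbf a^i)$ and $M(\mathbf a^{i,j})$ introduced in Section~\ref{Sec:ProofThmA}, whose rank collections equal $N$ almost everywhere and dip below the threshold $r^1_{i,i+1}$ (resp.\ $r^2_{i,j}$) only along a single ``interval'' of index pairs. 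Proving this (the paper's Theorem~\ref{t5}) is the real content of the converse and hinges on repeated use of the submodularity inequality $r_{i,l}+r_{j,k}\ge r_{i,k}+r_{j,l}$ for $i<j\le k<l$, which forces a failure $r_{i,j}<r^1_{i,j}$ at some pair to propagate to a failure at an \emph{adjacent} pair $r_{i,i+1}$, and similarly pins down the structure in the $\mathbf r^2$ case. Once these maximal bad orbits are correctly identified, the paper checks via the criterion of Theorem~\ref{tc} that $\overline K=S_i$ gives a second top-dimensional component for $M(\mathbf a^i)$, and that $\overline K=U_{i,j-1}$ violates the dimension inequality for $M(\mathbf a^{i,j})$. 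Your plan of ``enumerating admissible single rank drops'' would not locate these witnesses; you need the structural reduction of Theorem~\ref{t5} first.
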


Our next goal is to compute the number of orbits in the flat irreducible locus.  Let $B_{\mathbf e}$ be the number of these orbits. We note that $B_{\mathbf e}$ does not depend on $N$ (provided $N>e_n$).  
If $e_i=i$, then  $B_{\mathbf e}$ is equal to the $n$-th Bell number https://oeis.org/A000110 
(see \cite[Section 4.2]{CFFFR}).

We consider the generating function 
$$
B_n(x_1,\dots,x_n)=\sum_{{\mathbf e}} B_{\mathbf e}\, x_1^{e_1}x_2^{e_2-e_1}\dots x_n^{e_n-e_{n-1}}.
$$
\begin{thmb} We have
\[
B_n(x_1,\dots,x_n)=\prod_{i=1}^n (1-x_i)^{-1}\prod_{\emptyset\ne I\subset\{2,\dots,n\}} (1-\prod_{i\in I} x_i)^{-1}.
\]
\end{thmb}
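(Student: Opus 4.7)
The plan is to combine Theorem~A with a generating function computation. By Theorem~A, $B_{\mathbf{e}}$ counts the isomorphism classes of representations $M$ of the equi-oriented $A_n$ quiver with dimension vector $(N,\dots,N)$ (for $N>e_n$) satisfying $r^M_{i,j}\geq N-e_j+e_i$ for all $1\leq i<j\leq n$. By Gabriel's theorem $M$ decomposes uniquely as $\bigoplus_{1\leq a\leq b\leq n}U_{[a,b]}^{m_{a,b}}$ with multiplicities subject to $\sum_{a\leq i\leq b}m_{a,b}=N$ at every vertex~$i$, and $r^M_{i,j}=\sum_{a\leq i,\,b\geq j}m_{a,b}$. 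Using $r^M_{i,i}=N$, the flat irreducible condition rewrites as
\[
\phi_{ij}(m):=\sum_{a\leq i\leq b<j}m_{a,b} \;\leq\; e_j-e_i, \qquad 1\leq i<j\leq n.
\]

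The key first observation is that, among these $\binom{n}{2}$ inequalities, only the $n-1$ adjacent ones ($j=i+1$) are independent. Setting $c_k:=e_k-e_{k-1}$ and $E_k(m):=\phi_{k,k+1}(m)=\sum_{a\leq k}m_{a,k}$ (the number of indecomposable summands of $M$ ending at vertex~$k$), the computation
\[
\sum_{k=i}^{j-1}E_k(m)-\phi_{ij}(m) \;=\; \sum_{i<a\leq j-1}\sum_{a\leq k\leq j-1}m_{a,k} \;\geq\; 0
\]
shows that $c_{k+1}\geq E_k(m)$ for $k=1,\dots,n-1$ implies $c_{i+1}+\cdots+c_j\geq\phi_{ij}(m)$ for all $i<j$. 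Interchanging the order of summation and evaluating the inner geometric series in $(c_i)$ gives
\[
B_n(x_1,\dots,x_n) \;=\; \prod_{i=1}^n (1-x_i)^{-1} \cdot \sum_{(m_{a,b})}\prod_{j=2}^n x_j^{E_{j-1}(m)},
\]
where the outer sum is over multiplicity tuples satisfying the dimension vector constraint. For $N>e_n$ the count stabilizes, with $m_{1,n}$ absorbing~$N$ and the remaining $(m_{a,b})_{1\leq a\leq b\leq n-1}$ being free subject only to $\sum_{b=i}^{n-1}m_{i,b}\leq E_{i-1}(m)$ for $i=2,\dots,n-1$ (ensuring non-negativity of the determined $m_{i,n}$).

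It therefore suffices to prove the combinatorial identity
\[
Z_n(x_2,\dots,x_n) \;:=\; \sum_{(m_{a,b})_{b\leq n-1}}\prod_{j=2}^n x_j^{E_{j-1}(m)} \;=\; \prod_{\emptyset\neq I\subseteq\{2,\dots,n\}}\Bigl(1-\prod_{i\in I}x_i\Bigr)^{-1}.
\]
I would argue by induction on~$n$. Splitting the right-hand product according to whether $n\in I$ factors it as $Z_{n-1}\cdot\prod_{J\subseteq\{2,\dots,n-1\}}(1-x_n\prod_{j\in J}x_j)^{-1}$, so by the inductive hypothesis it remains to establish $Z_n=Z_{n-1}\cdot\prod_J(1-x_n x_J)^{-1}$. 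For fixed $(m_{a,b})_{b\leq n-2}$ (the $A_{n-1}$-part, contributing $\prod_{j=2}^{n-1}x_j^{E_{j-1}(m)}$ to $Z_{n-1}$), the sum over the new variables $(m_{a,n-1})_{1\leq a\leq n-1}$ --- where $m_{1,n-1}$ is unbounded and each $m_{a,n-1}$ for $a\geq 2$ is capped above by the slack $E_{a-1}(m)-\sum_{b=a}^{n-2}m_{a,b}$ --- is a product of truncated geometric series in $x_n$. The main obstacle is a partial fraction argument showing that, after re-summing over all $A_{n-1}$ orbits, these truncations rearrange precisely into the factor $\prod_J(1-x_n x_J)^{-1}$.

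For $n=3$ this is a direct calculation: $Z_3=\sum_{m_{1,1}\geq 0}x_2^{m_{1,1}}(1-x_3^{m_{1,1}+1})(1-x_3)^{-2}=(1-x_2)^{-1}(1-x_3)^{-1}(1-x_2 x_3)^{-1}$, the underlying bijection being $b_{\{2\}}=m_{1,1}-m_{2,2}$, $b_{\{3\}}=m_{1,2}$, $b_{\{2,3\}}=m_{2,2}$. An equivalent reformulation of the general identity is a weight-preserving bijection between admissible tuples $(m_{a,b})_{b\leq n-1}$ and non-negative integer vectors $(b_I)_{\emptyset\neq I\subseteq\{2,\dots,n\}}$ satisfying $E_{j-1}(m)=\sum_{I\ni j}b_I$. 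The subtlety is that non-contiguous subsets~$I$ (such as $\{2,4\}$ for $n\geq 4$) do not correspond to any single interval $[a,b]$ and so require a coordinated distribution of "correction" terms among several multiplicities; the inequality constraint $\sum_{b=i}^{n-1}m_{i,b}\leq E_{i-1}(m)$ is precisely what produces the flexibility for such corrections. Carrying out this bijection, or equivalently the partial fraction argument, is the technical heart of the proof.
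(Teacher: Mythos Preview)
Your reduction is correct: the flat--irreducible condition $r^M_{i,j}\ge r^1_{i,j}$ really is equivalent to the $n-1$ adjacent inequalities $E_k(m)\le e_{k+1}-e_k$, and summing out the differences $c_i$ correctly produces the factor $\prod_i(1-x_i)^{-1}$ times your $Z_n$. But the proposal is not a complete proof: you explicitly leave the identity
\[
Z_n(x_2,\dots,x_n)\;=\;\prod_{\emptyset\ne I\subseteq\{2,\dots,n\}}\Bigl(1-\textstyle\prod_{i\in I}x_i\Bigr)^{-1}
\]
as ``the technical heart'' still to be carried out. In your multiplicity coordinates $(m_{a,b})_{b\le n-1}$ this identity is genuinely awkward, because the admissibility constraints $\sum_{b=i}^{n-1}m_{i,b}\le E_{i-1}(m)$ are nested (the bound on row $i$ depends on row $i-1$), so the summation does not factor and the partial-fraction/bijection step you describe is not routine.

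The paper bypasses exactly this difficulty by a change of coordinates that \emph{is} the missing bijection. Rather than the multiplicities $(m_{a,b})$, it represents an orbit by a projection sequence $(\mathrm{pr}_{J_1},\dots,\mathrm{pr}_{J_{n-1}})$ and sets, for each nonempty $I\subseteq\{1,\dots,n-1\}$,
\[
f_I\;=\;\#\bigl\{\,k\in[N]\ :\ \{s:k\in J_s\}=I\,\bigr\}.
\]
This gives a bijection between orbits in the flat--irreducible locus and lattice points of the polytope $\{(f_I)\ge 0:\sum_{I\ni i}f_I\le e_{i+1}-e_i,\ i=1,\dots,n-1\}$. The point is that in the $(f_I)$-coordinates the constraints decouple: after summing over the $c_i$, the variables $f_I$ range \emph{independently} over $\mathbb{Z}_{\ge 0}$, each contributing a weight $\prod_{i\in I}x_{i+1}$, so the product over nonempty $I\subseteq\{2,\dots,n\}$ drops out immediately. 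In other words, the bijection you were looking for between admissible $(m_{a,b})$-tuples and unconstrained $(b_I)$-tuples is precisely the content of the paper's construction; once it is in hand, neither induction nor a partial-fraction argument is really needed.
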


Next, we describe the reduced scheme structure for the quiver Grassmannians corresponding to the representations
in $U_{flat, irr}$ by providing an explicit set of quadratic generators for the ideal describing the Pl\"ucker embedding
(see also \cite{LW}).
Our main combinatorial tool is the notion of PBW semi-standard Young
tableaux (see \cite{Feigin2}), parametrizing a basis in the homogeneous coordinate ring of the PBW degenerate flag
varieties. We prove the following theorem.

\begin{thmc}
For any orbit $\mathcal{O}$ degenerating to $\mathcal{O}_{{\mathbf r}^1}$ there exists a point $M\in \mathcal{O}$ such
that the semi-standard PBW tableaux form a basis in the homogeneous coordinate ring of ${\rm Gr}_{\mathbf e}(M)$.
\end{thmc}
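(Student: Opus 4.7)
The plan is to produce, for each orbit $\mathcal O$ in the flat irreducible locus $U_{flat,irr}$, an explicit representative $M\in\mathcal O$ adapted to the PBW combinatorics, and then to show that the Pl\"ucker monomials indexed by PBW semi-standard tableaux form a basis of the homogeneous coordinate ring of ${\rm Gr}_{\mathbf e}(M)$. First, I would use the classification of $G_{\mathbf d}$-orbits by rank collections ${\mathbf r}$ to choose $M=(f_1,\ldots,f_{n-1})$ acting on the standard basis of $V$ as a coordinate ``staircase'' map: each $f_i$ sends a prescribed subset of basis vectors to zero and the rest to basis vectors, in such a way that the rank of every composition matches ${\mathbf r}$. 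This normal form is a common generalization of the explicit representative $M^1$ used for the PBW degenerate flag variety in \cite{Feigin2, FFFM} and of the generic (identity) representative $M^0$.

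Next, I would write down the Pl\"ucker embedding ${\rm Gr}_{\mathbf e}(M)\hookrightarrow\prod_{k=1}^{n}\Gr(e_k,N)$. Its homogeneous coordinate ring is the quotient of the polynomial ring in the Pl\"ucker coordinates $p_I^{(k)}$ by an ideal generated by (i) the classical quadratic Pl\"ucker relations on each factor and (ii) incidence relations encoding $f_i(U_i)\subset U_{i+1}$, which for the chosen $M$ are transparent linear combinations of one-column Pl\"uckers, in the spirit of \cite{LW}. With these equations in hand, I would build a straightening procedure: starting from an arbitrary product of Pl\"ucker coordinates (equivalently, a semi-standard Young tableau), a sequence of applications of (i) and (ii) rewrites it modulo the ideal as a linear combination of products indexed by PBW semi-standard tableaux. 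The termination of the procedure relies on a diagonal/lexicographic term order on the entries of the tableau, together with the observation that the hypothesis ${\mathbf r}\ge{\mathbf r}^1$ supplied by Theorem~A keeps every incidence relation available at $M^1$ still available at $M$ (possibly with additional summands), so the PBW straightening moves of \cite{Feigin2} continue to apply.

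The argument closes with a dimension count: by \cite{Feigin2} the number of PBW semi-standard tableaux of shape $\lambda$ equals $\dim V_\lambda$, while Theorem~A, the flatness of $\pi$ on $U_{flat,irr}$, and the Borel--Weil theorem for $G/P$ together force the $\lambda$-graded piece of the coordinate ring of ${\rm Gr}_{\mathbf e}(M)$ to have the same dimension; hence a spanning set of the right cardinality is a basis.

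The main obstacle I anticipate is precisely the straightening step: one must verify that the reduction terminates inside the PBW cone for \emph{every} orbit in $U_{flat,irr}$, not only for the deepest one at $M^1$. This is where the numerical inequalities $r_{i,j}\ge N-e_j+e_i$ are essential --- they guarantee that the extra incidence relations carried by a non-PBW $M$ cannot produce non-PBW tableaux beyond those already controlled at $M^1$. A careful choice of term order, matched to the staircase form of $M$, together with a case-by-case analysis of the leading terms in the incidence relations should close this gap.
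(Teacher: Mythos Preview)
Your three-step outline---pick a canonical representative, straighten every Pl\"ucker monomial to PBW semi-standard form, close by a dimension count using flatness and the known case of $M^1$---is exactly the paper's strategy, and your dimension argument is the same as the one given there.

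The substantive difference lies in the straightening step. You propose a coordinate-projection (``staircase'') representative and a direct analysis of the deformed incidence relations. The paper instead chooses $M$ on the transversal slice of \cite{CFFFR} (Lemma~\ref{canon}), so that each $f_i$ is \emph{upper-triangular} and equals the identity outside rows and columns $\{i,i+1\}$. Consequently every composite $f_{s,r}$ is upper-triangular with identity blocks outside $\{s,\dots,r\}$, and an upper-triangular change of basis on $M_s$ and $M_r$ (performed separately for each pair $(s,r)$) turns $f_{s,r}$ into a coordinate projection $\pr_I$ with $I\subset\{s,\dots,r\}$. In the new coordinates $Y$ the relation $R^{I}_{J,L;k}(0)$ needed to straighten $Y_JY_L$ is exactly of the type handled by Proposition~\ref{P1P2} (whose proof uses $K(s,r)\subset\{s,\dots,r\}$ to locate the entries $l_1,\dots,l_{k_0}$ outside $K$), while upper-triangularity of the base change forces $X_JX_L-Y_JY_L$ to be a combination of monomials with strictly smaller total index sum, so the recursion terminates.

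This change-of-basis device is precisely what replaces the ``case-by-case analysis of leading terms'' you anticipate. Your heuristic that the relations at $M^1$ remain available at $M$ ``with additional summands'' is the right intuition---it expresses that the ideal degenerates along the slice---but by itself it does not supply a term order in which those extra summands are smaller. If you insist on a projection representative with arbitrary $I_k$, the containment $K(s,r)\subset\{s,\dots,r\}$ can fail, and then the key step in Proposition~\ref{P1P2} (identifying which column entries lie outside $K$) no longer goes through; the straightening move need not decrease the tableau in the PBW order. The slice form is what makes the term order (sum of all indices) and the supply of usable relations line up without further casework.
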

We conjecture that a similar result holds for the whole flat locus. 

Finally, we discuss groups acting on the fibers in the flat irreducible locus and study the sections of 
natural line bundles. More precisely, we make use of a transversal slice $T$ through the flat irreducible locus constructed in \cite{CFFFR}. For a $Q$-representation 
$M_t$ for $t\in T$ we construct a group $G_t$ acting on the quiver Grassmannian  $\Gr_\be(M_t)$ with an open dense orbit.
We construct a family of representations of $G_t$ and identify them with the dual spaces of sections of natural line bundles 
on $\Gr_\be(M_t)$.

Our paper is organized as follows. In section~\ref{quiver} we recall some basic facts about quivers and quiver Grassmannians of type $A$. In section~\ref{Sec:ProofThmA} we prove Theorem~A. In section~\ref{Sec:ProofThmB} we prove Theorem~B. In section~\ref{Sec:Plucker} we describe the ideal of relations defining linear flat degenerations and prove Theorem~C. In section~\ref{Sec:LineBundles} we construct line bundles on the flat degenerations of the complete flag variety and 
provide a Borel-Weil-type theorem for quiver Grassmannians. 

\vskip 5pt
\noindent
\textbf{Acknowledgments}. The work of the authors is supported by the DFG-RSF project ``Geometry and representation theory at the interface between Lie algebras and quivers''. E.F. was partially supported by the Russian Academic Excellence Project '5-100'.

\section{Methods from the representation theory of quivers}\label{quiver}

\subsection{Quiver representations}

For all basic definitions and facts on the representation theory of (Dynkin) quivers, we refer to \cite{ASS}.

Let $Q$ be a finite quiver with the set of vertices $Q_0$ and arrows written $a:i\rightarrow j$ for $i,j\in Q_0$. 
We  assume that $Q$ is a Dynkin quiver, that is, its underlying unoriented graph $|Q|$ is a disjoint union of simply-laced Dynkin diagrams. 
\par
We consider (finite-dimensional) $\mathbb{C}$-representations of $Q$. Such a representation is given by a tuple 
$$M=((M_i)_{i\in Q_0},(f_a)_{a:i\rightarrow j}),$$
where $M_i$ is a finite-dimensional $\mathbb{C}$-vector space for every vertex $i$ of $Q$, and $f_a:M_i\rightarrow M_j$ 
is a $\mathbb{C}$-linear map for every arrow $a:i\rightarrow j$ in $Q$. A morphism between representations $M$ and 
$K=((K_i)_i,(g_a)_a)$ is a tuple of $\mathbb{C}$-linear maps $(\varphi_i:M_i\rightarrow K_i)_{i\in Q_0}$ such that 
$\varphi_jf_a=g_a\varphi_i$ for all $a:i\rightarrow j$ in $Q$. Composition of morphisms is defined componentwise, resulting in a $\mathbb{C}$-linear category ${\rm rep}_\mathbb{C}Q$. This category is $\mathbb{C}$-linearly equivalent to the category $\bmod A$ of finite-dimensional left modules over the path algebra $A=\mathbb{C} Q$ of $Q$.
\par
For a vertex $i\in Q_0$, we denote by $S_i$ the simple representation associated to $i$, namely, $(S_i)_i=\mathbb{C}$ and 
$(S_i)_j=0$ for all $j\not=i$, and all maps being identically zero; every simple representation is of this form. We let 
$P_i$ be a projective cover of $S_i$, and $I_i$ an injective hull of $S_i$. 
\par
The Grothendieck group $K_0({\rm rep}_\mathbb{C}Q)$ is isomorphic to the free abelian group ${\mathbb Z}Q_0$ in $Q_0$ 
via the map attaching to the class of a representation $M$ its dimension vector 
${\rm\mathbf{dim}\,} M=(\dim M_i)_{i\in Q_0}\in\ZZ Q_0$. The category ${\rm rep}_\mathbb{C}Q$ is hereditary, that is, ${\rm Ext}^{\geq 2}(\_,\_)$ vanishes identically, and its homological Euler form
$$\dim {\rm Hom}(M,K)-\dim {\rm Ext}^1(M,K)=\langle{\rm\mathbf{dim}\,} M,{\rm\mathbf{dim}\,} K\rangle$$
is given by
$$\langle{\mathbf d},{\mathbf e}\rangle=\sum_{i\in Q_0}d_ie_i-\sum_{a:i\rightarrow j}d_ie_j.$$
For two dimension vectors $\mathbf{e},\mathbf{d}\in\mathbb{N}{Q_0}$ we write $\mathbf{e}\leq\mathbf{d}$ if $e_i\leq d_i$ for all $i\in Q_0$. 

By Gabriel's theorem, the isomorphism classes $[U_\alpha]$ of indecomposable representations $U_\alpha$ of $Q$ correspond bijectively to the positive roots $\alpha$ of the root system 
$\Phi$ of type $|Q|$; more concretely, we realize $\Phi$ as the set of vectors $\alpha\in{\mathbb Z}Q_0$ such that $\langle\alpha,\alpha\rangle=1$; then there exists a unique (up to isomorphism) indecomposable representation $U_\alpha$ such that ${\rm\mathbf{dim}\,} U_\alpha=\alpha$ for every $\alpha\in\Phi^+=\Phi\cap{\mathbb N}Q_0$.

We make our discussion of the representation theory of a Dynkin quiver so far explicit in the case of the equi-oriented type 
$A_n$ quiver $Q$ given as
$$
\xymatrix{
1\ar[r]&2\ar[r]&\ar[r]\cdots\ar[r]&n.
}
$$We identify ${\mathbb Z}Q_0$ with ${\mathbb Z}^n$, and the Euler form is then given by
$$\langle{\mathbf d},{\mathbf e}\rangle=\sum_{i=1}^nd_ie_i-\sum_{i=1}^{n-1}d_ie_{i+1}.$$
We denote the indecomposable representations by $U_{i,j}$ for $1\leq i\leq j\leq n$, where $U_{i,j}$ is given as
$$0\rightarrow\ldots\rightarrow 0\rightarrow \mathbb{C}\stackrel{{\rm id}}{\rightarrow}\ldots\stackrel{{\rm id}}{\rightarrow}\mathbb{C}\rightarrow 0\rightarrow\ldots\rightarrow 0,$$
supported on the vertices $i,\ldots,j$. In particular, we have $S_i=U_{i,i}$, $P_i=U_{i,n}$, $I_i=U_{1,i}$ for all $i$.

We have
$$\dim{\rm Hom}(U_{i,j},U_{k,l})=1\mbox{ if and only if }k\leq i\leq l\leq j$$
and zero otherwise, and we have
$$\dim{\rm Ext}^1(U_{k,l},U_{i,j})=1\mbox{ if and only if }k+1\leq i\leq l+1\leq j,$$
and zero otherwise, where the extension group, in case it is non-zero, is generated by the class of the exact sequence
$$0\rightarrow U_{i,j}\rightarrow U_{i,l}\oplus U_{k,j}\rightarrow U_{k,l}\rightarrow 0,$$ 
where we formally set $U_{i,j}=0$ if $i<1$ or $j>n$ or $j<i$.
\par
Given two dimension vectors ${\mathbf e}$ and ${\mathbf s}$ such that $e_0:=0\leq e_1\leq e_2\leq\ldots\leq e_n$ and 
$s_1\geq s_2\geq\ldots\geq s_n\geq s_{n+1}:=0$, we define the two $Q$--representations: 
\begin{equation}\label{Eq:PeIf}
P^{\mathbf e}:=\bigoplus_{i=1}^nP_i^{e_i-e_{i-1}},\ \ I^{\mathbf s}:=\bigoplus_{i=1}^nI_i^{s_i-s_{i+1}}.
\end{equation}

Given a dimension vector ${\mathbf d}\in{\mathbb N}Q_0$ and $\mathbb{C}$-vector spaces $V_i$ of dimension $d_i$ ($i\in Q_0$), let $R_{\mathbf d}$ be the affine space $$R_{\mathbf d}=\bigoplus_{i=1}^{n-1}{\rm Hom}_\mathbb{C}(V_i,V_{i+1}),$$
on which the group
$G_{\mathbf d}={\rm GL}(V_1)\times \cdots\times  {\rm GL}(V_n)$
acts via base change: given $g=(g_i)_{i=1}^n\in G_\mathbf{d}$ and $f=(f_i)_{i=1}^{n-1}\in R_\mathbf{d}$, we have $g\cdot f=f'$ where $f'$ makes commutative every square
$$
\xymatrix{V_i\ar^{f_i}[r]\ar_{g_i}[d]&V_{i+1}\ar^{g_{i+1}}[d]\\V_i\ar^{f'_i}[r]&V_{i+1}}
$$
for $i\in Q_0$. The $G_{\mathbf d}$-orbits in $R_{\mathbf d}$ are naturally parametrized by isomorphism classes of representations of $Q$ of dimension vector ${\mathbf d}$. 
By the Krull-Schmidt theorem, a $Q$-representation $M$ is, up to isomorphism, determined by the multiplicities of the $U_{i,j}$, that is,
$$M=\bigoplus_{i\leq j}U_{i,j}^{m_{i,j}}.$$
Then ${\rm\mathbf{dim}\,} M={\mathbf d}$ is equivalent to
$$\sum_{k\leq i\leq l}m_{k,l}=d_i\mbox{ for all }i.$$
We define
$$r_{i,j}(M)=\sum_{k\leq i\leq j\leq l}m_{k,l}$$
for $i\leq j$. We note that $r_{i,j}$ is equal to the rank of the composite map $M_i\to M_j$.
Viewing $M$ as a tuple of maps $(f_1,\ldots,f_{n-1})$ as before, $r_{i,j}$ is thus the rank of $f_{j-1}\circ\ldots\circ f_i$ and, trivially, we have $r_{i,i}=n+1$. We can recover $m_{i,j}$ from $(r_{k,l})_{k,l}$ via
$$m_{i,j}=r_{i,j}-r_{i,j+1}-r_{i-1,j}+r_{i-1,j+1},$$
for all $1\leq i\leq j\leq n$, where we formally set $r_{i,j}=0$ if $i=0$ or $j=n+1$ and $r_{i,i}=n+1$. 
We easily derive the inequality
\begin{equation}\label{recin}r_{i,l}+r_{j,k}\geq r_{i,k}+r_{j,l}\end{equation}
for all four-tuples $i<j\leq k<l$.

Let $\mathcal{O}_{\mathbf r}$ be a subset of $R_{\mathbf d}$ consisting of maps $(f_1,\dots,f_{n-1})$ such that
\[
{\rm rank} (f_{j-1}\circ\dots\circ f_i)=r_{i,j}.
\]
If non-empty, $\mathcal{O}_{\mathbf r}$ is a single $G_\mathbf{d}$-orbit, and every orbit arises in this way.

The orbit of $M$ degenerates to the orbit of $K$ if $K$ (or $\mathcal{O}_K$) is contained in the closure of $\mathcal{O}_M$. 
In this case we write $M\leq_{deg} K$. By \cite{Bo}, we have for any $U$
\begin{equation}\label{Eq:DegChar}
M\leq_{deg} K\textrm{ if and only if }\dim{\rm Hom}(U,M)\leq\dim{\rm Hom}(U,K).
\end{equation}

\subsection{Dimension estimates for certain quiver Grassmannians}\label{Sec:QG}

Let $Q$ be an equi-oriented quiver of type $A_n$. Let $N\geq n+1$ and let $V$ be a complex vector space of dimension $N$. 
Given the dimension vector $\mathbf{d}=(N,\cdots, N)\in\mathbb{N}^{n}$, the variety $R_\mathbf{d}$ consists of collections 
$(f_i:V\rightarrow V)_{i=1}^{n-1}$ of linear endomorphisms of $V$. Let ${\mathbf e}=(e_1,\cdots, e_n)$ be a dimension vector such that 
$e_0:=0<1\leq e_1\leq\cdots\leq e_n\leq e_{n+1}:=N$, 
$Z_{\mathbf e}={\rm Gr}_{e_1}(V)\times\ldots\times{\rm Gr}_{e_n}(V)$ and let $Y_{\mathbf e}\subset R_{\mathbf d}\times Z_{\mathbf e}$ be the variety of compatible pairs of
sequences $(f_*,U_*)$ such that $f_i(U_i)\subset U_{i+1}$ for all $i$. The natural projection $\pi:Y_{\mathbf{e}}\to R_{\mathbf d}$ is called the universal quiver Grassmannian and it is the family mentioned in the introduction  that we want to study. 
It is $G_\mathbf{d}$-equivariant and the quiver Grassmannian for a $Q$-representation $M\in R_\mathbf{d}$ is defined as 
${\rm Gr}_{\mathbf e}(M)=\pi^{-1}(M)$.

We would like to estimate the dimension of ${\rm Gr}_{\mathbf e}(M)$. A general representation $M^0$ of dimension vector ${\mathbf d}$ is isomorphic to 
$U_{1,n}^{N}$, thus all its arrows are represented by the identity maps.  Since ${\rm Gr}_{\mathbf e}(M^0)$ is a partial $\SL_{N}$-flag variety, we denote by ${\rm Fl}^{\mathbf r}(V)$ the $\pi$-fiber over a point in $\mathcal{O}_{\mathbf r}$, which is well-defined up to isomorphism since $\pi$ is $G_\mathbf{d}$-equivariant. We call ${\rm Fl}^{\mathbf r}(V)$ the ${\mathbf r}$-degenerate partial flag variety.

It follows from \cite[Prop.~2.2]{CFR1} that every irreducible component of ${\rm Fl}^{\mathbf r}(V)$ has dimension at least 
$$
\textrm{dim}({\rm Fl}^{\mathbf r}(V))\geq \langle\mathbf{e},\mathbf{d-e}\rangle=\sum_{i=1}^ne_i(e_{i+1}-e_i)=\textrm{dim}(\mathrm{SL}_N/P)
$$
where $P$ is an appropriate parabolic subgroup. 
We would like to study for which rank collections $\mathbf r$ this dimension estimate is an equality, and in case the equality holds, 
how many irreducible components the corresponding ${\mathbf r}$-degenerate partial flag varieties  have. It turns out that this can 
be done by a straightforward modification of the proof of \cite[Theorem~1, Proposition~1]{CFFFR}. We get the following complete answer. 

To state the result we need to recall the stratification of ${\rm Gr}_{\mathbf e}(M)$ introduced in \cite{CFR1}. Namely, for 
a representation $K$ of dimension vector ${\mathbf e}$, let $\mathcal{S}_{[K]}$ be the subset of ${\rm Gr}_{\mathbf e}(M)$ consisting 
of all sub-representations $U\subset M$ which are isomorphic to $K$. Then $\mathcal{S}_{[K]}$ is known to be an irreducible locally 
closed subset of ${\rm Gr}_{\mathbf e}(M)$ of dimension $\dim{\rm Hom}(K,M)-\dim{\rm End}(K)$. Since this gives a stratification of 
${\rm Gr}_{\mathbf e}(M)$ into finitely many irreducible locally closed subsets, the irreducible components of ${\rm Gr}_{\mathbf e}(M)$ 
are necessarily of the form $\overline{\mathcal{S}_{[K]}}$ for certain $K$.

\begin{theorem}\label{tc} Let $Q$ be the equi-oriented quiver of type $A_n$. Let ${\mathbf d}=(N,\cdots, N)$, 
$\mathbf{e}=(e_1\leq\cdots\leq e_n)$ and $\mathbf{f}=\mathbf{d-e}$ be dimension vectors as above.  
Let $M$ be a $Q$--representation of dimension vector ${\mathbf d}$, written as $M=P\oplus X$, where $P$ is  projective.
\begin{enumerate}
\item The quiver Grassmannian ${\rm Gr}_{\mathbf e}(M)$ has dimension $\langle\mathbf{e},\mathbf{d-e}\rangle$ if and only if, for all subrepresentations $\overline{K}$ of $X$ such that ${\mathbf e}-{\rm\mathbf{dim}\,}\overline{K}\leq {\rm \mathbf{dim}}\,P$, we have
$$\dim{\rm End}(\overline{K})\geq \dim{\rm Hom}(\overline{K},X)-\dim{\rm Hom}(\overline{K},I^{\mathbf f}).$$
\item In this case, the irreducible components of ${\rm Gr}_{\mathbf e}(M)$ are of the form $\overline{\mathcal{S}_{[K]}}$ 
for representations $K=K_P\oplus\overline{K}$ such that, $K_P$ is projective, $\overline{K}$ has no projective direct summands 
and in the previous inequality for $\overline{K}$, equality holds.
\end{enumerate}
\end{theorem}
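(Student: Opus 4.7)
The proof mirrors \cite[Theorem~1, Proposition~1]{CFFFR}, adapted to an arbitrary dimension vector $\mathbf{e}$. I would organize it in three steps.

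\emph{Step 1 (stratification).} The stratification $\Gr_{\mathbf{e}}(M) = \bigsqcup_{[K]} \mathcal{S}_{[K]}$ recalled before the statement is finite, with each stratum irreducible and of dimension $\dim\Hom(K, M) - \dim\End(K)$. Hence $\dim\Gr_{\mathbf{e}}(M) = \max_{[K]}\dim\mathcal{S}_{[K]}$ and the irreducible components are the closures of the top-dimensional strata. Combined with the universal lower bound $\dim\Gr_{\mathbf{e}}(M) \ge \langle \mathbf{e}, \mathbf{d-e}\rangle$ of \cite[Prop.~2.2]{CFR1}, both parts of the theorem reduce to establishing the inequality
\[
\dim\Ext^1(K, M) \leq \dim\Ext^1(K, K)
\]
(equivalent, via the hereditary Euler form identity, to $\dim\mathcal{S}_{[K]} \leq \langle \mathbf{e}, \mathbf{d-e}\rangle$) and then describing the equality cases.

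\emph{Step 2 (decomposition of sub-representations).} For $K \subset M = P \oplus X$, the intersection $K_P := K \cap P$ is a sub-representation of $P$, hence itself projective: in the equi-oriented $A_n$ setting every sub-representation of $P_i = U_{i,n}$ is of the form $U_{j,n}$. Projectivity of $K_P$ splits $0 \to K_P \to K \to \overline{K} \to 0$, and the quotient $\overline{K}$ embeds into $M/P = X$ and acquires no projective summands (as $X$ has none). Conversely, every pair $(K_P, \overline{K})$ with $K_P$ projective, $\overline{K} \subset X$ without projective summands, and $\mathbf{dim}\,K_P = \mathbf{e} - \mathbf{dim}\,\overline{K} \leq \mathbf{dim}\,P$ yields such a $K$. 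Two vanishings will drive Step~3: $\Ext^1(K_P, -) = 0$ by projectivity, and the equi-oriented $A_n$-specific identity $\dim\Hom(U_{i,j}, U_{k,n}) = 0$ for $j < n$, which forces $\Hom(\overline{K}, P) = \Hom(\overline{K}, K_P) = 0$.

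\emph{Step 3 (conclusion).} Substituting the splitting of Step~2 into the Ext-inequality and applying the Euler form $\dim\Ext^1(\overline{K}, A) = \dim\Hom(\overline{K}, A) - \langle \mathbf{dim}\,\overline{K}, \mathbf{dim}\,A\rangle$ to each of the resulting terms, the Hom contributions collapse via the vanishings of Step~2 (and using $\mathbf{dim}\,K_P + \mathbf{dim}\,\overline{K} = \mathbf{e}$) to
\[
\dim\Ext^1(K, M) - \dim\Ext^1(K, K) = \dim\Hom(\overline{K}, X) - \dim\End(\overline{K}) - \langle \mathbf{dim}\,\overline{K}, \mathbf{f}\rangle.
\]
Since $I^{\mathbf{f}}$ is injective (so $\Ext^1(-, I^{\mathbf{f}}) = 0$) and has dimension vector $\mathbf{f}$, the Euler form gives $\langle \mathbf{dim}\,\overline{K}, \mathbf{f}\rangle = \dim\Hom(\overline{K}, I^{\mathbf{f}})$. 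The inequality of Step~1 therefore becomes exactly $\dim\End(\overline{K}) \geq \dim\Hom(\overline{K}, X) - \dim\Hom(\overline{K}, I^{\mathbf{f}})$, proving (1); part (2) follows as the equality cases, with the parametrization by $(K_P, \overline{K})$ from Step~2. The main delicacy lies in the splitting argument of Step~2 — checking that $\overline{K}$ really is a sub-representation of $X$ of the stated form and that the constraint $\mathbf{e} - \mathbf{dim}\,\overline{K} \leq \mathbf{dim}\,P$ is both necessary and sufficient for realizability — which is where the equi-oriented $A_n$ structure enters essentially.
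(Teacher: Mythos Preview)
Your proof is correct and follows exactly the approach the paper intends: the paper's own proof is the single sentence ``This is a straightforward modification of the proof of \cite[Theorem~1]{CFFFR},'' and your three steps are precisely that modification spelled out. The reduction via strata dimensions to the $\Ext$-inequality, the splitting $K=K_P\oplus\overline{K}$, and the Euler-form bookkeeping leading to $\dim\Hom(\overline{K},X)-\dim\End(\overline{K})-\dim\Hom(\overline{K},I^{\mathbf f})$ are all as in \cite{CFFFR}.

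One small point to tighten in Step~2: you assert that $\overline{K}=K/(K\cap P)$ ``acquires no projective summands (as $X$ has none),'' but the theorem as stated only says $P$ is projective, not that it is the \emph{maximal} projective summand, so $X$ could in principle carry a copy of $U_{n,n}=P_n$ (and indeed in the paper's application to $M^2$ one has $X=S\oplus I^{\mathbf f}/S$, which contains $S_n$). The clean fix is to take $K_P$ to be the maximal projective summand of $K$ rather than $K\cap P$; then $\overline{K}$ has no projective summands by construction, the vanishings $\Hom(\overline{K},P)=\Hom(\overline{K},K_P)=0$ follow immediately from $\Hom(U_{i,j},U_{k,n})=0$ for $j<n$, and your Step~3 computation goes through verbatim. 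The realizability constraint $\mathbf{e}-{\rm\mathbf{dim}}\,\overline{K}\leq{\rm\mathbf{dim}}\,P$ then records exactly when a projective $K_P$ of the required dimension vector embeds in $P$.
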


\begin{proof}
This is a straightforward modification of the proof of \cite[Theorem~1]{CFFFR}.
\end{proof}

\section{Flat and flat-irreducible locus}\label{Sec:ProofThmA}
In this section we prove Theorem~A of the introduction. 
\subsection{Complements of certain open loci in \texorpdfstring{$R_{\mathbf d}$}{the base space}}

We retain the notation of the previous section. Thus, $Q$ is the equi-oriented quiver of type $A_n$, $N\geq n+1$, $\mathbf{d}=(N,\cdots, N)\in\mathbb{N}^n$, $\mathbf{e}=(e_1\leq\cdots\leq e_n)\leq\mathbf{d}$ and $\mathbf{f}=\mathbf{d-e}$. We are going to show the technical key result to prove Theorem~\ref{Thm:A}. 
We introduce some special representations in $R_\mathbf{d}$: for a tuple ${\mathbf a}=(a_1,\ldots,a_{n-1})$ of non-negative integers $a_i$ such that $\sum_{i<n}a_i\leq N$, we define $M({\mathbf a})$ by the multiplicities:
$$m_{1,n}=N-\sum_ia_i,\; m_{1,i}=a_i\mbox{ for }i<n,\; m_{i,n}=a_{i-1}\mbox{ for }i>1,$$
and $m_{j,k}=0\mbox{ for all other }j<k.$
In particular, we define
$$M^0=M(0,\ldots,0),\;
M^1=M(e_2-e_1,\ldots,e_n-e_{n-1}).$$

It is easily verified that
$$r_{ij}(M({\mathbf a}))=N-\sum_{i\leq k<j}a_k.$$

We also define $M^2$ by the multiplicities $$m_{1,1}=e_2-e_1+1,\; m_{n,n}=e_n-e_{n-1}+1,\; m_{1,i}=e_{i+1}-e_i\mbox{ for all }i>1,$$
$$m_{i,n}=e_i-e_{i-1}\mbox{ for all }i<n,\; m_{i,i}=1\mbox{ for all }1<i<n,$$
and $m_{j,k}=0$ for all other $j<k$.

A direct calculation then shows that
$${\mathbf r}(M^0)={\mathbf r}^0,\;
{\mathbf r}(M^1)={\mathbf r}^1,\;
{\mathbf r}(M^2)={\mathbf r}^2$$
as defined in \eqref{Eq:r1} and \eqref{Eq:r2}, respectively.
In more invariant terms, we can write $M^1=P^{\mathbf e}\oplus I^{\mathbf f}$. There exists a short exact sequence
$$0\rightarrow P^{\mathbf e}\rightarrow M^0\rightarrow I^{\mathbf f}\rightarrow 0.$$
We have canonical maps
$$P^{\mathbf e}\rightarrow S=\bigoplus_{i=1}^n S_i\rightarrow I^{\mathbf f},$$
and $M^2$ can be written as
\begin{equation}\label{Eq:DefM2}
M^2\simeq P^{\mathbf e}\oplus S\oplus (I^{\mathbf f}/S).
\end{equation}

Now we turn to degenerations of representations. Again we write $M\leq_{deg} K$ if the closure of the $G_{\mathbf d}$-orbit of $M$ 
contains $K$; the numerical characterization \eqref{Eq:DegChar} of degenerations mentioned above then reads
$$M\leq_{deg} K\mbox{ if and only if } r_{i,j}(M)\geq r_{i,j}(K)\mbox{ for all }i<j.$$
The representation $M^0=U_{1,n}^{N}$ is generic in the sense that $M^0\leq M$ for all $M$ in $R_\mathbf{d}$.
The following result characterizes representations $M\in R_\mathbf{d}$ that degenerate to $M^1$.
\begin{proposition}
Given $M\in R_\mathbf{d}$ we have: $M\leq_{deg} M^1$ if and only if there exists a short exact sequence $0\rightarrow P^{\mathbf e}\rightarrow M\rightarrow I^{\mathbf f}\rightarrow 0$.
\end{proposition}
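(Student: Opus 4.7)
The plan is to handle the two directions separately. For the $(\Leftarrow)$ direction, I would use the standard principle that any short exact sequence degenerates to its split version: the given SES is classified by some $\xi\in{\rm Ext}^1(I^{\mathbf f},P^{\mathbf e})$, and the one-parameter family of extensions with class $t\xi$ (for $t\in\mathbb{C}$) gives an algebraic curve in $R_{\mathbf d}$ whose general member is isomorphic to $M$ and whose fibre at $t=0$ is the split extension $P^{\mathbf e}\oplus I^{\mathbf f}=M^1$. This witnesses $M^1\in\overline{\mathcal{O}_M}$, i.e.\ $M\leq_{deg}M^1$.

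For the $(\Rightarrow)$ direction, assume $M\leq_{deg}M^1$ and introduce the locus
\[
X:=\{N\in R_{\mathbf d}\,:\,\text{there exists an exact sequence }0\to P^{\mathbf e}\to N\to I^{\mathbf f}\to 0\}.
\]
My plan is to show that $X$ is open and $G_{\mathbf d}$-stable in $R_{\mathbf d}$; granting this, a standard orbit-closure argument concludes. Indeed, $M^1\in X$ via the split SES, and $M^1\in\overline{\mathcal{O}_M}$ by hypothesis, so the open $G_{\mathbf d}$-stable set $X$ meets $\overline{\mathcal{O}_M}$; density of $\mathcal{O}_M$ in its closure then forces $\mathcal{O}_M\cap X\neq\emptyset$, and $G_{\mathbf d}$-invariance of $X$ gives $\mathcal{O}_M\subseteq X$, so $M\in X$. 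Both the $G_{\mathbf d}$-stability of $X$ and the membership $M^1\in X$ are immediate.

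The substance of the argument, and the main obstacle I foresee, is the openness of $X$. I would realize $X$ as the image, under the structural projection $\pi:Y_{\mathbf e}\to R_{\mathbf d}$, of the subvariety
\[
Z:=\{(N,V)\in Y_{\mathbf e}\,:\,V\cong P^{\mathbf e}\text{ and }N/V\cong I^{\mathbf f}\},
\]
which is open in $Y_{\mathbf e}$ because $P^{\mathbf e}$ and $I^{\mathbf f}$ are rigid (being projective and injective, respectively). The remaining claim, that $\pi|_Z$ is an open map, I expect to follow from a deformation-theoretic unobstructedness argument based on ${\rm Ext}^1(P^{\mathbf e},I^{\mathbf f})=0$ (by projectivity of $P^{\mathbf e}$) together with the fact that $\dim{\rm Hom}(P^{\mathbf e},N)$ and $\dim{\rm Hom}(N,I^{\mathbf f})$ are both constant in $N\in R_{\mathbf d}$, since the respective Hom-functors are exact. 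A more constructive alternative, perhaps parallel in spirit to the corresponding proof in \cite{CFFFR}, would be to use the rank inequalities $r_{i,j}(M)\geq N-e_j+e_i$ directly to build subspaces $V_i\subseteq M_i$ of dimension $e_i$ by descending induction on $i$, arranging at each step that $f_i|_{V_i}$ is injective and $V_i+f_{i-1}(M_{i-1})=M_i$; the bounds on $r_{i,j}(M)$ are exactly what is needed to make each descending choice feasible, and the resulting $V_\bullet$ is then automatically isomorphic to $P^{\mathbf e}$ with quotient $I^{\mathbf f}$.
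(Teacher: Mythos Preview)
Your $(\Leftarrow)$ direction agrees with the paper's: both invoke the standard fact (the paper cites \cite[Lemma~1.1]{Bo2}) that the middle term of a short exact sequence degenerates to the direct sum of the outer terms.

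For $(\Rightarrow)$ the paper takes a different and much shorter route. Since $P^{\mathbf e}$ is projective, one has $\dim{\rm Hom}(P^{\mathbf e},M)=\langle{\mathbf e},{\mathbf d}\rangle=\dim{\rm Hom}(P^{\mathbf e},M^1)$; the paper then invokes Bongartz's theorem \cite[Theorem~2.4]{Bo2}, which says that whenever $M\leq_{deg}N$ and $U$ is a submodule of $N$ with $\dim{\rm Hom}(U,M)=\dim{\rm Hom}(U,N)$, the module $U$ also embeds in $M$ and the generic quotient of $M$ by $U$ degenerates to $N/U$. Applied with $U=P^{\mathbf e}$ and $N=M^1$ this immediately produces the required embedding, and since $I^{\mathbf f}$ is already the open orbit in $R_{\mathbf f}$ the generic quotient must equal $I^{\mathbf f}$. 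Your openness strategy is also valid but more involved; the cleanest way to complete it is not through the proper map $\pi|_Z$ (whose openness is not obvious) but through the bundle $\mathcal{H}\to R_{\mathbf d}$ with fibre ${\rm Hom}(P^{\mathbf e},N)$, whose projection \emph{is} open: the constancy of $\dim{\rm Hom}(P^{\mathbf e},-)$ you noted is exactly what makes $\mathcal{H}$ a vector bundle, the locus of injective maps with cokernel isomorphic to $I^{\mathbf f}$ is open in $\mathcal{H}$, and its image is $X$. Your direct-construction alternative via the rank inequalities would also work. The paper's argument is quicker but imports a nontrivial black box; yours is more self-contained and exposes the underlying geometry.
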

\begin{proof}
If $M$ fits into the stated exact sequence then $M$ degenerates to $P^\mathbf{e}\oplus I^\mathbf{f}$ (\cite[Lemma~1.1]{Bo2}). On the other hand, suppose that $M\leq_{deg} M^1=P^\mathbf{e}\oplus I^\mathbf{f}$. Since $[P^\mathbf{e},M]=[P^\mathbf{e},M^1]$ we can conclude that $P^\mathbf{e}$ embeds into $M$ by \cite[Theorem~2.4]{Bo2} and the generic quotient of $M$ by $P^\mathbf{e}$ is $I^\mathbf{f}$. 
\end{proof}

We are now interested in the complement of the locus of representations degenerating into $M^1$ resp. $M^2$. For this, we introduce the following tuples:
\begin{itemize}
\item for $1\leq i<n$, define $${\mathbf a}^i=(0,\ldots,0,e_{i+1}-e_i+1,0,\ldots,0),$$ with the $i$-th entry being non-zero;
\item  for $1\leq i<j<n$, define 
$${\mathbf a}^{i,j}=(0,\ldots,0,e_{i+1}-e_i+1,e_{i+2}-e_{i+1},\ldots,e_{j-1}-e_{j-2},e_j-e_{j-1}+1,0,\ldots,0),$$
with the non-zero entries placed between the $i$-th and the $(j-1)$-st entry, except in the case $j=i+1$, where we define   
$${\mathbf a}^{i,i+1}=(0,\ldots,0,e_{i+1}-e_i+2,0,\ldots,0),$$ with the $i$-th entry being non-zero.
\end{itemize}

Now we can formulate:

\begin{theorem}\label{t5} Let $M$ be a representation in $R_{\mathbf d}$.
\begin{enumerate}
\item If $M$ degenerates to $M^2$ but not to $M^1$, then $M$ is a degeneration of $M({\mathbf a}^i)$ for some $i$.
\item If $M$ does not degenerate to $M^2$, then $M$ is a degeneration of $M({\mathbf a}^{i,j})$ for some $i<j$.
\end{enumerate}
\end{theorem}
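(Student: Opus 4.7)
My plan is to work throughout with the shifted quantity $\tilde r_{ij}(M) := r_{ij}(M) + e_j - e_i$, which unifies the degeneration conditions: $M \leq_{deg} M^1$ amounts to $\tilde r_{ij}(M) \geq N$ for all $i < j$, $M \leq_{deg} M^2$ to $\tilde r_{ij}(M) \geq N - 1$, and the target degenerations $M({\mathbf a}^i) \leq_{deg} M$ resp.\ $M({\mathbf a}^{i,j}) \leq_{deg} M$ translate into explicit upper bounds on $r_{kl}(M)$ obtained by writing $r_{kl}(M({\mathbf a}))$ as $N$ minus a telescoping sum of the entries of ${\mathbf a}$. The key combinatorial inputs are the rank monotonicity $r_{kl}(M) \leq r_{k'l'}(M)$ whenever $[k',l'] \subseteq [k,l]$, and Sylvester's rank inequality $\tilde r_{ij}(M) \geq \tilde r_{im}(M) + \tilde r_{mj}(M) - N$ for $i < m < j$, which is the degenerate case $j = k$ of the supermodular inequality \eqref{recin} (using the convention $r_{jj} = N$).

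For part (1), the hypotheses yield some pair with $\tilde r_{ij}(M) = N - 1$. I will iterate to consecutive indices: if $j > i+1$, Sylvester gives $\tilde r_{i,i+1}(M) + \tilde r_{i+1,j}(M) \leq \tilde r_{ij}(M) + N = 2N - 1$, and since $M \leq_{deg} M^2$ forces both summands $\geq N - 1$, one of them equals $N-1$; replacing $(i,j)$ accordingly and recursing produces $i^*$ with $\tilde r_{i^*, i^*+1}(M) = N - 1$. Then for every $(k,l)$ with $k \leq i^* < l$, monotonicity gives $r_{kl}(M) \leq r_{i^*, i^*+1}(M) = N - 1 - e_{i^*+1} + e_{i^*}$, matching $r_{kl}(M({\mathbf a}^{i^*}))$ exactly, while for the remaining $(k,l)$ one has $r_{kl}(M({\mathbf a}^{i^*})) = N$ vacuously; hence $M({\mathbf a}^{i^*}) \leq_{deg} M$.

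For part (2), I choose $(i^*, j^*)$ with $\tilde r_{i^*, j^*}(M) \leq N - 2$ and $j^* - i^*$ minimal. The subcase $j^* = i^* + 1$ is handled by the same monotonicity step as in (1), now with drop $N - 2 - e_{i^*+1} + e_{i^*}$. When $j^* \geq i^* + 2$, the argument rests on two subclaims. First, $\tilde r_{i^*,m}(M) = \tilde r_{m,j^*}(M) = N - 1$ for every $i^* < m < j^*$: minimality of $j^* - i^*$ forces both quantities $\geq N - 1$, while Sylvester applied to $r_{i^*,j^*}$ split at $m$ forces their sum $\leq \tilde r_{i^*,j^*}(M) + N \leq 2N - 2$, pinning both to $N - 1$. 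Second, $\tilde r_{kl}(M) \leq N$ for every $i^* < k < l < j^*$: Sylvester applied to $r_{k,j^*}$ split at $l$ gives $\tilde r_{kl}(M) \leq \tilde r_{k,j^*}(M) + N - \tilde r_{l,j^*}(M) = (N - 1) + N - (N - 1) = N$ by the first subclaim.

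The desired degeneration $M({\mathbf a}^{i^*, j^*}) \leq_{deg} M$ then follows by comparing $r_{kl}(M)$ with $r_{kl}(M({\mathbf a}^{i^*, j^*}))$ in each of the five geometric positions of $(k,l)$ relative to $[i^*, j^*]$: no overlap is trivial; full overlap $k \leq i^* < j^* \leq l$ uses monotonicity against $r_{i^*, j^*}(M)$; the two partial overlaps $k \leq i^* < l < j^*$ and $i^* < k < j^* \leq l$ use monotonicity against $r_{i^*, l}(M)$ resp.\ $r_{k, j^*}(M)$ combined with the first subclaim; and the strictly interior position $i^* < k < l < j^*$ uses the second subclaim. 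I expect this last position to be the main obstacle: $\tilde r_{kl}(M)$ has no a priori upper bound of $N$ (indeed $\tilde r_{kl}(M^0) = N + e_l - e_k$ already exceeds $N$ whenever $e_l > e_k$), so bounding it must genuinely exploit the minimality of the bad pair $(i^*, j^*)$, cascaded through two nested applications of Sylvester.
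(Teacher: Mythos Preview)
Your proof is correct and follows essentially the same route as the paper: reduce to rank collections via the numerical degeneration criterion, use the $j=k$ case of \eqref{recin} (your ``Sylvester'') to locate/pin down a bad pair, and then compare $r_{kl}(M)$ against $r_{kl}(M({\mathbf a}^{\bullet}))$ region by region via rank monotonicity. Your shifted variable $\tilde r_{ij}$ is just a convenient repackaging.

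One point worth highlighting: for the strictly interior region $i^*<k<l<j^*$ in part~(2), the paper proves the \emph{equality} $r_{kl}(M)=N-e_l+e_k$ via an induction on $k$ that chains several applications of \eqref{recin}. You instead observe that only the upper bound $\tilde r_{kl}(M)\le N$ is needed for the degeneration comparison, and obtain it in a single Sylvester step by splitting $r_{k,j^*}$ at $l$ and plugging in $\tilde r_{k,j^*}=\tilde r_{l,j^*}=N-1$ from your first subclaim. This is a genuine streamlining of the paper's argument; the paper's inductive chain (with its somewhat awkward telescoping estimates) is unnecessary for the statement being proved.
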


\begin{proof} To prove the first part, let $M$ degenerate to $M^2$ but not to $M^1$ and consider the corresponding rank collection ${\mathbf r}={\mathbf r}(M)$. 
Degeneration of $M$ to $M^2$ is equivalent to ${\mathbf r}\geq{\mathbf r}^2$ componentwise, thus $r_{i,j}\geq N-1-e_j+e_i$ for all $i<j$. Non-degeneration of $M$ to $M^1$ is equivalent to ${\mathbf r}\not\geq{\mathbf r}^1$, thus there exists a pair $i<j$ such that $r_{i,j}<N-e_j+e_i$, which implies $r_{i,j}=N-1-e_j+e_i$. We claim that this equality already holds for a pair $i<j$ such that $j=i+1$. Suppose, to the contrary, that $r_{i,j}=N-1-e_j+e_i$ for some pair $i<j$ such that $j-i\geq 2$, and that $r_{k,l}\geq N-e_l+e_k$ for all $k<l$ such that $l-k<j-i$. In particular, we can choose an index $k$ such that $i<k<j$, and the previous estimate holds for $r_{i,k}$ and $r_{k,j}$. But then, the inequality (\ref{recin}), applied to the quadruple $i<k=k<j$ yields
$$2N-1-e_j+e_i=r_{i,j}+r_{k,k}\geq r_{i,k}+r_{k,j}=2N-e_j+e_i,$$
a contradiction. We thus find an index $i$ such that $r_{i,i+1}=N-1-e_{i+1}+e_i$, and thus $r_{k,l}\leq N-1-e_{i+1}+e_i$ for all $k\leq i<i+1\leq l$ trivially. On the other hand, it is easy to compute the rank collection of $M({\mathbf a}^i)$ as
$$r_{j,k}(M({\mathbf a}^i))=N-1-e_{i+1}+e_i\mbox{ for }j\leq i<k,$$
and $r_{j,k}(M({\mathbf a}^i))=N$ otherwise. This proves that ${\mathbf r}\leq{\mathbf r}(M({\mathbf a}^i))$ as claimed.
\par
Now suppose that $M$ does not degenerate to $M^2$, and again consider the rank collection ${\mathbf r}={\mathbf r}(M)\not\geq {\mathbf r}^2$. We thus find a pair $i<j$  such that $$r_{i,j}\leq N-2-e_j+e_i.$$
We assume this pair to be chosen such that $j-i$ is minimal with this property; thus
$$r_{k,l}\geq N-1-e_l+e_k\mbox{ for all }k<l\mbox{ such that }l-k<j-i.$$ For every $i<k<j$, application of the inequality (\ref{recin}) to the quadruple $i<k=k<j$ yields
$$2N-2-e_j+e_i=N-2-e_j+e_i+N\geq r_{i,j}+r_{k,k}\geq$$
$$\geq r_{i,k}+r_{k,j}\geq N-1-e_k+e_i+N-1-e_j+e_j=2N-2-e_j+e_i,$$
from which we conclude
$$r_{i,k}=N-1-e_k+e_i,\, r_{k,j}=N-1-e_j+e_k\mbox{ for all }i<k<j$$
and
$$r_{i,j}=N-2-e_j+e_i.$$
Now we claim that
$$r_{k,l}=N-e_l+e_k\mbox{ for all }i<k<l<j.$$
This condition is empty if $j-i=1$, thus we can assume $j-i\geq 2$.
We prove this by induction over $k$, starting with $k=i+1$. For every $i+1<l<j$, application of (\ref{recin}) to $i<l-1<l<l$ yields
$$r_{i+1,l-1}=r_{i+1,l-1}+r_{i,l}-r_{i,l-1}+e_l-e_{l-1}\geq r_{i+1,l}+e_l-e_{l-1}.$$
This, together with (\ref{recin}) for $i<i+1\leq j-1<j$, yields the estimate
$$N=r_{i+1,i+1}\geq r_{i+1,i+2}+e_{i+2}-e_{i+1}\geq r_{i+1,i+3}+e_{i+3}-e_{i+1}\geq\ldots$$
$$\ldots\geq r_{i+1,j-1}+e_{j-1}-e_{i+1}\geq r_{i+1,j}+r_{i,j-1}-r_{i,j}+e_{j-1}-e_{i+1}=N,$$
thus equality everywhere. Now assume that $k>i+1$, and that the claim holds for all relevant $r_{k-1,l}$. Similarly to the previous argument, we arrive at an estimate
$$N=r_{k,k}\geq r_{k,k+1}+e_{k+1}-e_k\geq r_{k,k+2}+e_{k+2}-e_k\geq\ldots$$
$$\ldots\geq r_{k,j-1}+e_{j-1}-e_k\geq r_{k,j}+r_{k-1,j-1}-r_{k-1,j}+e_{j-1}-e_k=N,$$
and this again yields equality everywhere. This proves the claim.

Finally, we have the trivial estimates
\begin{itemize}
\item $r_{k,l}\leq r_{i,j}=N-2-e_j+e_i$ if $k\leq i\leq j\leq l$,
\item $r_{k,l}\leq r_{i,l}=N-1-e_l+e_i$ if $k<i<l<j$,
\item $r_{k,l}\leq r_{k,j}=N-1-e_j+e_k$ if $i<k<j<l$, and trivially
\item $r_{k,l}\leq N$ otherwise, that is, if $k<l\leq i<j$ or $i<j\leq k<l$.
\end{itemize}
An elementary calculation of ${\mathbf r}(M({\mathbf a}^{i,j}))$ shows that all these estimates together prove that
$${\mathbf r}\leq{\mathbf r}(M({\mathbf a}^{i,j})).$$ The theorem is proved.
\end{proof}

\subsection{Proof of Theorem~A}
We can now combine Theorem \ref{tc} and Theorem \ref{t5} to prove Theorem~A stated in the introduction. For the reader's convenience we restate it here. Let $Q$ be the equi-oriented quiver of type $A_n$. Let ${\mathbf d}=(N,\cdots, N)$, $\mathbf{e}=(e_1\leq\cdots\leq e_n)$ and $\mathbf{f}=\mathbf{d-e}$ be dimension vectors as above. Let $\pi: Y_\mathbf{e}\rightarrow R_\mathbf{d}$ be the universal quiver Grassmannian, whose generic fiber is a partial flag variety of dimension $\langle\mathbf{e,d-e}\rangle$. Consider the rank collections $\mathbf{r}^0$, $\mathbf{r}^1$ and $\mathbf{r}^2$ defined by
\begin{eqnarray*}
r^0_{i,j}&=&N,\ 1\le i<j\le n;\\
r^1_{i,j}&=&N-e_j+e_i,\ 1\le i<j\le n;\\
r^2_{i,j}&=&N-1-e_j+e_i,\ 1\le i<j\le n.
\end{eqnarray*}

\begin{theorem}\label{t2} The following holds:
\begin{itemize}\item[a)] The flat locus $U_{flat}\subset R_\mathbf{d}$ is the union of all orbits $\mathcal{O}_{{\mathbf r}}$  degenerating to 
$\mathcal{O}_{{\mathbf r^2}}$, i.e. $r_{i,j}\ge r^2_{i,j}$ for all pairs $i,j$.
\item[b)] The flat irreducible locus $U_{flat,irr}\subset R_\mathbf{d}$ is the union of all orbits $\mathcal{O}_{{\mathbf r}}$ 
degenerating to $\mathcal{O}_{{\mathbf r^1}}$, i.e. $r_{i,j}\ge r^1_{i,j}$ for all pairs $i,j$.
\end{itemize}
\end{theorem}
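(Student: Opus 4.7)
The plan is to bridge Theorem~\ref{tc} (dimension and components of a quiver Grassmannian) with Theorem~\ref{t5} (complement loci in $R_\mathbf{d}$) via upper semi-continuity of the fiber dimension of the proper family $\pi: Y_\mathbf{e} \to R_\mathbf{d}$. Since the minimum fiber dimension is $\langle\mathbf{e}, \mathbf{d-e}\rangle$, attained on the generic orbit, upper semi-continuity shows that $U_{flat}$ is open, $G_\mathbf{d}$-stable, and \emph{upward-closed} in the componentwise order on rank collections: if $\mathcal{O}_{\mathbf{r}} \subseteq U_{flat}$ and $\mathbf{r}' \geq \mathbf{r}$, then $\mathcal{O}_{\mathbf{r}'} \subseteq U_{flat}$. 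The same upward-closure holds for $U_{flat,irr}$, using upper semi-continuity of the number of irreducible components of fibers in a flat proper family.

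Granting this, part (a) reduces to two claims: $M^2 \in U_{flat}$, and $M(\mathbf{a}^{i,j}) \notin U_{flat}$ for every pair $i<j$. Indeed, the first claim together with upward-closure gives $\{\mathbf{r} \geq \mathbf{r}^2\} \subseteq U_{flat}$, while the second, combined with Theorem~\ref{t5}(2), gives the reverse inclusion: if $\mathbf{r}(M) \not\geq \mathbf{r}^2$, then $M(\mathbf{a}^{i,j}) \leq_{deg} M$ for some $i<j$, whence $\dim\mathrm{Gr}_\mathbf{e}(M) \geq \dim\mathrm{Gr}_\mathbf{e}(M(\mathbf{a}^{i,j})) > \langle\mathbf{e}, \mathbf{d-e}\rangle$. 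I would verify $M^2 \in U_{flat}$ by applying Theorem~\ref{tc}(1) to the decomposition $M^2 \simeq P^\mathbf{e} \oplus S \oplus (I^\mathbf{f}/S)$ from (\ref{Eq:DefM2}), with $P = P^\mathbf{e}$ and $X = S \oplus (I^\mathbf{f}/S)$, checking the inequality $\dim\mathrm{End}(\overline{K}) \geq \dim\mathrm{Hom}(\overline{K}, X) - \dim\mathrm{Hom}(\overline{K}, I^\mathbf{f})$ for every subrepresentation $\overline{K} \subseteq X$ satisfying the projective bound, using the explicit $\mathrm{Hom}$-dimensions between the indecomposables $U_{k,l}$. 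The non-flatness at $M(\mathbf{a}^{i,j})$ amounts to exhibiting a single violator $\overline{K}$, which I expect to find among the natural subrepresentations built from the extra indecomposable summands $U_{1,k}$ and $U_{k+1,n}$ that $\mathbf{a}^{i,j}$ introduces.

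Part (b) proceeds in parallel, reducing to $M^1 \in U_{flat,irr}$ and $M(\mathbf{a}^i) \notin U_{flat,irr}$. From part (a) we already know $M(\mathbf{a}^i) \in U_{flat}$, so only reducibility of $\mathrm{Gr}_\mathbf{e}(M(\mathbf{a}^i))$ has to be shown; by Theorem~\ref{tc}(2) this amounts to exhibiting two distinct $\overline{K}_1, \overline{K}_2$ that both attain equality in the Theorem~\ref{tc}(1) bound, producing two top-dimensional strata. For $M^1 = P^\mathbf{e} \oplus I^\mathbf{f}$, Theorem~\ref{tc} applied with $X = I^\mathbf{f}$ yields both flatness and irreducibility once the inequality is verified with equality attained by a unique $\overline{K}$; I expect this unique $\overline{K}$ to be $I^\mathbf{f}$ itself, recovering the PBW-degenerate partial flag variety as in \cite{CFFFR}, with uniqueness following by showing that any proper subrepresentation of $I^\mathbf{f}$ meeting the projective bound makes the inequality strict.

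The main obstacle is the explicit combinatorial verification of Theorem~\ref{tc}(1) for all the relevant $\overline{K}$ with careful bookkeeping of the constraint $\mathbf{e} - \mathbf{dim}\,\overline{K} \leq \mathbf{dim}\,P$. The $\mathrm{Hom}$-dimensions between the $U_{k,l}$ are explicit, but summing them over the summands of $\overline{K}$ and of $X$ is delicate, and it is not a priori clear why the single-unit gaps between $\mathbf{r}^1$, $\mathbf{r}^2$, and the rank collections $\mathbf{r}(M(\mathbf{a}^{i,j}))$ cleanly separate the three regimes. A secondary concern is the invocation of upper semi-continuity of the number of irreducible components on the flat locus, standard for flat proper morphisms but deserving an explicit justification.
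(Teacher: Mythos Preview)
Your strategy coincides with the paper's: combine Theorem~\ref{tc} with Theorem~\ref{t5} and upper semi-continuity, reducing everything to the four concrete checks you list. Two points of detail deserve correction or sharpening.

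For $M^1=P^{\mathbf e}\oplus I^{\mathbf f}$ with $X=I^{\mathbf f}$, the right-hand side of the Theorem~\ref{tc} inequality is $\dim\Hom(\overline{K},I^{\mathbf f})-\dim\Hom(\overline{K},I^{\mathbf f})=0$, so the criterion reads $\dim\End(\overline{K})\geq 0$, trivially satisfied, with equality precisely when $\overline{K}=0$. Hence the unique irreducible component corresponds to $\overline{K}=0$ (equivalently $K=P^{\mathbf e}$), not to $\overline{K}=I^{\mathbf f}$ as you guessed; the latter has strictly positive endomorphism dimension and, moreover, in general contains the projective summand $I_n=P_1$, so it is not even admissible in Theorem~\ref{tc}(2).

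The combinatorial verification for $M^2$ that you flag as the main obstacle collapses once you use the exact sequence $0\to S\to I^{\mathbf f}\to I^{\mathbf f}/S\to 0$ and injectivity of $I^{\mathbf f}$: one obtains
\[
\dim\Hom(\overline{K},S\oplus I^{\mathbf f}/S)-\dim\Hom(\overline{K},I^{\mathbf f})=\dim\Ext^1(\overline{K},S),
\]
so the criterion becomes $\dim\End(\overline{K})\geq\dim\Ext^1(\overline{K},S)$. Writing $\overline{K}=\bigoplus_{i\le j<n}U_{i,j}^{k_{i,j}}$, the right side equals $\sum k_{i,j}$ while the left is at least $\sum k_{i,j}^2$, and the inequality follows with no bookkeeping of the projective bound needed. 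The explicit violator at $M(\mathbf{a}^{i,j})$ is $\overline{K}=U_{i,j-1}$ (a subrepresentation of the injective summand $U_{1,j-1}$ of $X$; note that the summands $U_{k+1,n}$ you mention are projective and sit in $P$, not $X$), and the two equality-achievers at $M(\mathbf{a}^i)$ are $\overline{K}=0$ and $\overline{K}=S_i$. For the semi-continuity of the number of irreducible components on the flat locus, the paper simply cites \cite[Theorem~2(2)]{CFFFR}.
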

\begin{proof}
The flat locus $U_{flat}\subset R_\mathbf{d}$ consists of those $M\in R_\mathbf{d}$ such that the fiber $\pi^{-1}(M)$ has minimal dimension given by $\textrm{dim}\,\Gr_\mathbf{e}(M)=\langle\mathbf{e,d-e}\rangle$ (see e.g. \cite[Theorem~2~(1)]{CFFFR}). Let us prove that $\textrm{dim}\,\Gr_\mathbf{e}(M^2)=\langle\mathbf{e,d-e}\rangle$.
We have $M^2=P\oplus X$ with $P=P^{\mathbf e}$ and $X=S\oplus I^{\mathbf f}/S$ and we can apply the criterion of Theorem \ref{tc}. Using the exact sequence
$$0\rightarrow S\rightarrow I^{\mathbf f}\rightarrow I^{\mathbf f}/S\rightarrow 0,$$
and injectivity of $I^{\mathbf f}$, we can rewrite
$$\dim{\rm Hom}(\overline{K},S\oplus I^{\mathbf f}/S)-\dim{\rm Hom}(\overline{K},I^{\mathbf f})=\dim{\rm Ext}^1(\overline{K},S).$$
We thus have to check the inequality
$$\dim{\rm End}(\overline{K})\geq\dim{\rm Ext}^1(\overline{K},S).$$
Writing
$$\overline{K}=\bigoplus_{1\leq i\leq j<n}U_{i,j}^{k_{i,j}},$$
we have
$$\dim{\rm Ext}^1(\overline{K},S)=\sum_{1\leq i\leq j<n}k_{i,j},$$
and certainly
$$\dim{\rm End}(\overline{K})\geq \sum_{1\leq i\leq j<n}k_{i,j}^2.$$
This proves the claim about the dimension of ${\rm Gr}_{\mathbf e}(M^2)$.  Next, suppose that $M$ does not degenerate to $M^2$. 
By Theorem \ref{t5}, $M$ is a degeneration of some $M({\mathbf a}^{i,j})$ for $i<j$. We claim that 
${\rm Gr}_{\mathbf e}(M({\mathbf a}^{i,j}))$ has dimension strictly bigger than $\langle\mathbf{e},\mathbf{d-e}\rangle$. 
Namely, we can choose a subrepresentation $K\in{\rm Gr}_{\mathbf e}(M({\mathbf a}^{i,j}))$ such that 
$\overline{K}=U_{i,j-1}$ (notation as in Theorem~\ref{tc}). The conditions of Theorem \ref{tc} are easily seen to be violated. By upper
semi-continuity of fiber dimensions, $\dim{\rm Gr}_{\mathbf e}(M)$ is also strictly bigger than $\langle\mathbf{e,d-e}\rangle$.

Since $r^1_{i,j}\geq r^2_{i,j}$ for every $i,j$, it follows that $M^1\leq_{deg} M^2$ and hence $M^1\in U_{flat}$. 
Let us prove that $\Gr_\mathbf{e}(M^1)$ is irreducibile. This follows from Theorem~\ref{tc}: Indeed, $M^1=P\oplus X$ for $P=A$ and $X=A^*$. The criterion of Theorem \ref{tc} then reads $\dim{\rm End}(\overline{K})\geq 0$ which is trivially fulfilled, and irreducibility 
follows since $\overline{K}=0$ is the only representations for which equality holds. On the other hand, since 
${\rm Gr}_{\mathbf e}(M^1)$ is irreducible, then ${\rm Gr}_{\mathbf e}(M')$ is irreducible for every representation degenerating to $M^1$ 
(see e.g. \cite[Theorem~2~(2)]{CFFFR}). Suppose that $M$ does not degenerate to $M^1$. By Theorem \ref{t5}, $M$ is a degeneration of some 
$M({\mathbf a}^{i})$. We claim that ${\rm Gr}_{\mathbf e}(M({\mathbf a}^{i}))$ is reducible. Namely, we consider the two subrepresentations 
$K_1$ and $K_2$ determined by $\overline{K_1}=0$ and $\overline{K_2}=S_i$ (notation as in Theorem~\ref{tc}). Both $K_1$ and $K_2$ 
fulfill equality in the estimate of Theorem \ref{tc}, thus ${\rm Gr}_{\mathbf e}(M({\mathbf a}^i))$ has at least two irreducible components. It hence follows that ${\rm Gr}_{\mathbf e}(M)$ is reducible (see e.g. \cite[Theorem~2~(2)]{CFFFR}). 
\end{proof}

Since the orbit $\mathcal{O}_{{\mathbf r}^2}$ is minimal in the flat locus $U_{\rm flat}$, the linear degenerate partial 
flag variety ${\rm Fl}^{{\mathbf r}^2}(V)$ is maximally degenerated, thus we call it the maximally flat (mf)--linear 
degeneration of the partial flag variety. That this variety is rather natural, although being highly reducible and singular, 
is suggested by the next result (see also \cite{R,Rior}):

\begin{theorem}\label{t1}
The variety ${\rm Fl}^{{\mathbf r}^2}(V)$ is equi-dimensional, its number of irreducible components being the $n$-th Catalan number.
\end{theorem}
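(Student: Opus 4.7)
Plan: The proof splits into equi-dimensionality and a count of irreducible components.

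Equi-dimensionality follows immediately from Theorem~\ref{t2}(a): since $M^2\in U_{\rm flat}$, the fibre $\Gr_{\mathbf e}(M^2)={\rm Fl}^{{\mathbf r}^2}(V)$ has the expected dimension $\langle\mathbf e,\mathbf d-\mathbf e\rangle$, and combining with the lower bound $\geq\langle\mathbf e,\mathbf d-\mathbf e\rangle$ on each irreducible component from \cite[Prop.~2.2]{CFR1} forces every component to have exactly this dimension.

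For the component count, I would apply Theorem~\ref{tc}(2) to $M=M^2$ with $P=P^{\mathbf e}$ and $X=S\oplus I^{\mathbf f}/S$: the components are parametrized by subrepresentations $K=K_P\oplus\overline{K}$ with $K_P$ projective, $\overline{K}$ free of projective summands, and equality in the inequality of Theorem~\ref{tc}(1). Writing $\overline{K}=\bigoplus_{1\leq i\leq j<n}U_{i,j}^{k_{i,j}}$ and reusing the calculation from the proof of Theorem~\ref{t2} that the right-hand side of the inequality equals $\dim\Ext^1(\overline{K},S)=\sum_{i\leq j<n}k_{i,j}$, the equality becomes $\dim\End(\overline{K})=\sum k_{i,j}$. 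Expanding the left-hand side as $\sum_{\alpha,\beta}k_\alpha k_\beta\dim\Hom(U_\alpha,U_\beta)$ and using the Hom formula of Section~\ref{quiver} forces each $k_{i,j}\in\{0,1\}$ together with $\dim\Hom(U_{i,j},U_{k,l})=\dim\Hom(U_{k,l},U_{i,j})=0$ for every pair of distinct summands of $\overline{K}$; equivalently, the corresponding intervals $[i,j],[k,l]\subseteq\{1,\dots,n-1\}$ are pairwise either disjoint, or one is strictly contained in the other with no shared endpoint. Call such a collection of intervals \emph{admissible}. Under the hypothesis $1\leq e_1<\dots<e_n$ the projective complement $K_P$ then always exists: the required multiplicity $p_v=(e_v-e_{v-1})-((\overline{K})_v-(\overline{K})_{v-1})$ of $P_v$ is non-negative because admissibility forces $(\overline{K})_v-(\overline{K})_{v-1}\in\{-1,0,1\}$, and the embedding $K_P\hookrightarrow M^2$ follows by comparing dimension vectors of the projective parts.

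The remaining step, which I expect to be the main combinatorial obstacle, is to count admissible families in $\{1,\dots,n-1\}$. My plan is to exhibit a bijection with the non-crossing set partitions of $\{1,\dots,n\}$, whose number is the Catalan number $C_n$: to an admissible $\mathcal F$ I associate the graph on $\{1,\dots,n\}$ with edges $\{i,j+1\}$, one for each $[i,j]\in\mathcal F$, and declare its connected components to be the blocks of a partition; conversely, to each block $\{a_1<\dots<a_r\}$ of a non-crossing partition I assign the intervals $[a_s,a_{s+1}-1]$ for $s=1,\dots,r-1$. The no-shared-endpoint condition on $\mathcal F$ guarantees that distinct blocks carry disjoint vertex supports, so the construction genuinely produces a set partition; the disjoint-or-strictly-nested condition on intervals translates precisely into the non-crossing property of the partition. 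Verifying that the two constructions are mutually inverse is the delicate point; it reduces to a routine but attentive case analysis once the combinatorial translation is in place.
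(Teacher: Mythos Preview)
Your proposal is correct and follows essentially the same route as the paper: both reduce to the equality $\dim\End(\overline{K})=\dim\Ext^1(\overline{K},S)=\sum k_{i,j}$, deduce that $\overline{K}$ is multiplicity-free with pairwise Hom-orthogonal indecomposable summands, and then count such $\overline{K}$ by a Catalan object. The only cosmetic difference is the combinatorial packaging: the paper shifts indices, writing $\overline{K}=\bigoplus_{(i,j)\in A}U_{i,j-1}$ and recognising $A$ as a \emph{non-crossing arc diagram} on $n$ points (whose enumeration by $C_n$ is taken as known), whereas you keep the intervals $[i,j]\subseteq\{1,\dots,n-1\}$ and supply an explicit bijection with non-crossing set partitions of $\{1,\dots,n\}$; your edges $\{i,j+1\}$ are exactly the paper's arcs, so the two bijections coincide. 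One small point you state but do not justify is that each admissible $\overline{K}$ actually embeds into $X=S\oplus I^{\mathbf f}/S$; the paper likewise only says this is ``easily verified''.
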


An arc diagram on $n$ points is a subset $A$ of $\{(i,j),\, 1\leq i<j\leq n\}$ (draw an arc from $i$ to $j$ for every element $(i,j)$ of $A$). An arc diagram $A$ is called non-crossing if there is no pair of different elements $(i,j)$, $(k,l)$ in $A$ such that 
$i\le k<j\le l$ 
(that is, two arcs are not allowed to properly cross, or to have the same left or right point. But immediate succession of arcs, like for example $\{(1,2),(2,3)\}$, is allowed).

To a non-crossing arc diagram we associate a rank collection ${\mathbf r}(A)$ by
$$r(A)_{i,j}=e_i-\#\{\mbox{arcs in $A$ starting in $[1,i]$ and ending in $[i+1,j]$}\}.$$
Define $S_A\subset {\rm Fl}^{{\mathbf r}^2}(V)$ as the set of all tuples $(U_1,\ldots,U_n)$ such that
$${\rm rank}((f_{j-1}\circ\ldots\circ f_i)|_{U_i}:U_i\rightarrow U_j)=r(A)_{i,j}$$
for all $i<j$.

Moreover, define representations $\overline{N}_A$ and $N_A$ of $Q$ by
$$\overline{N}_A=\bigoplus_{(i,j)\in A}U_{i,j-1},\; N_A=\bigoplus_i P_i^{c_i}\oplus \overline{N}_A,$$
where
$$c_i=e_i-e_{i-1}+\#\{\mbox{arcs ending in $i$}\}-\#\{\mbox{arcs starting in $i$}\}.$$
It is immediately verified that ${\mathbf r}(A)$ is precisely the rank collection of $N_A$.

We have the following more precise version of the previous theorem:

\begin{theorem}\label{t3} The irreducible components of ${\rm Fl}^{{\mathbf r}^2}(V)$ are the closures of the $S_A$, for $A$ a non-crossing arc diagram.
\end{theorem}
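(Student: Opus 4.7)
The plan is to apply Theorem~\ref{tc} to $M^2 = P\oplus X$ with $P = P^{\mathbf e}$ and $X = S\oplus I^{\mathbf f}/S$, refining the analysis already carried out in the proof of Theorem~\ref{t2}. That proof reduces the equality condition of Theorem~\ref{tc} to
\[
\dim\operatorname*{End}(\overline{K}) = \dim\operatorname*{Ext}^{1}(\overline{K},S)=\sum_{1\le i\le j<n} k_{i,j},
\]
when $\overline{K}=\bigoplus_{1\le i\le j<n}U_{i,j}^{k_{i,j}}$ has no projective direct summands. First, I would combine this with the trivial estimate $\dim\operatorname*{End}(\overline{K})\ge\sum k_{i,j}^{2}$ coming from diagonal endomorphisms: equality forces every $k_{i,j}\in\{0,1\}$ and simultaneously forces $\operatorname*{Hom}(U_{i,j},U_{k,l})=0$ for every pair of distinct indecomposable summands appearing in $\overline{K}$. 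Using the Hom-formula $\dim\operatorname*{Hom}(U_{i,j},U_{k,l})=1$ iff $k\le i\le l\le j$, I would translate mutual Hom-orthogonality of the summands $U_{i,j-1}$ indexed by pairs $(i,j)$ into the statement that neither $K\le I<L\le J$ nor $I\le K<J\le L$ holds for any two distinct such pairs, which is exactly the non-crossing condition of the excerpt.

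Second, I would reverse the correspondence: to a non-crossing arc diagram $A$ I associate $\overline{K}=\overline{N}_A=\bigoplus_{(i,j)\in A}U_{i,j-1}$, which by the previous paragraph satisfies the equality in Theorem~\ref{tc}. The projective complement is forced by the dimension-vector constraint $\mathbf{dim}\,K=\mathbf{e}$; a direct count shows that the number of non-projective strands passing through vertex $i$ equals $\#\{(p,q)\in A: p\le i<q\}$, and so the multiplicities $c_i$ as defined in the statement are precisely what is required in order that $N_A=\bigoplus_i P_i^{c_i}\oplus\overline{N}_A$ have dimension vector $\mathbf{e}$. Thus the bijection
\[
\{\text{non-crossing arc diagrams } A\}\;\longleftrightarrow\;\{\text{irreducible components of }{\rm Fl}^{{\mathbf r}^2}(V)\},\qquad A\mapsto\overline{\mathcal{S}_{[N_A]}},
\]
is established by Theorem~\ref{tc}.

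Finally, I would match $S_A$ with $\mathcal{S}_{[N_A]}$. Since in type $A$ the isomorphism class of a quiver representation is determined by its rank collection via $m_{i,j}=r_{i,j}-r_{i,j+1}-r_{i-1,j}+r_{i-1,j+1}$, it suffices to verify that the intrinsic rank collection of $N_A$ equals $\mathbf{r}(A)$. A direct count of ranks of composite maps gives
\[
r_{i,j}(N_A)=\sum_{k\le i}c_k+\#\{(p,q)\in A: p\le i,\, q>j\},
\]
whereas the equality $e_i=\sum_{k\le i}c_k+\#\{(p,q)\in A:p\le i<q\}$ then yields $e_i-r_{i,j}(N_A)=\#\{\text{arcs starting in }[1,i]\text{ and ending in }[i+1,j]\}$, matching the definition of $r(A)_{i,j}$. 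Hence $S_A=\mathcal{S}_{[N_A]}$ and the claim follows. The only step requiring care is the combinatorial translation in the first paragraph, where one must pay attention to the asymmetric indexing shift $(i,j)\leftrightarrow U_{i,j-1}$ when matching Hom-orthogonality with the precise form $i\le k<j\le l$ of the non-crossing condition; once this is pinned down, the rest is bookkeeping.
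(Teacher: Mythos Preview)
Your proposal is correct and follows essentially the same route as the paper: you apply Theorem~\ref{tc}(2) to $M^2=P^{\mathbf e}\oplus(S\oplus I^{\mathbf f}/S)$, identify the equality $\dim\End(\overline K)=\dim\Ext^1(\overline K,S)$ with the multiplicity-free Hom-orthogonality condition, and translate this via the Hom-formula into the non-crossing condition on arcs. The paper handles the embeddability of $\overline{N}_A$ into $X$ and the dimension-vector constraint with a one-line ``easily verified'', while you are a bit more explicit about the $c_i$ and the rank identity $\mathbf r(N_A)=\mathbf r(A)$; these are cosmetic differences only.
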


\begin{proof}
Working again in the setup and the notation of the proof of Theorem \ref{t2}, the irreducible components are parametrized by the representations $K$ as above for which the direct summand $\overline{K}$ satisfies
$$\dim{\rm End}(\overline{K})=\dim{\rm Ext}^1(\overline{K},S).$$
To satisfy this equality, it is thus necessary and sufficient for $\overline{K}$ to have all multiplicities $k_{i,j}$ of 
indecomposables equal to either $0$ or $1$, and there should be no non-zero maps between those $U_{i,j}$ for which $k_{i,j}=1$. 
But this can be made explicit since
$$\dim{\rm Hom}(U_{i,j},U_{m,l})=1\mbox{ if }m\leq i\leq l\leq j,$$
and zero otherwise. Thus $\overline{K}$ has to be of the form
$$\overline{K}=\bigoplus_{(i,j)\in I}U_{i,j-1}$$
for a set $I$ of pairs $(i,j)$ with $i\leq j$, such that there is no pair of different elements $(i,j),(m,l)\in I$ 
fulfilling $i\leq m<j\leq l$. These are precisely the representations $\overline{K}_A$ associated to non-crossing arc diagrams 
introduced above. It suffices to check that these $\overline{K}$ fulfill the additional assumptions, that is, that they embed into 
$S\oplus I^{\mathbf f}/S$ and the condition on dimension vectors. But this is easily verified.
\end{proof}

\section{Counting orbits in the flat irreducible locus: proof of Theorem~B}\label{Sec:ProofThmB}
We retain notation as in the previous sections. Thus, $Q$ is the equi-oriented quiver of type $A_n$, $N\geq n+1$, $\mathbf{d}=(N,\cdots, N)\in\mathbb{N}^n$, $\mathbf{e}=(e_1\leq\cdots\leq e_n)\in\mathbb{N}^n$, $e_0=0$, $e_n<N$ and $\overline{e}_{i+1}=e_{i+1}-e_i$.
Let $B_\mathbf{e}$
be the number of orbits in the flat irreducible locus in $R_\mathbf{d}$ (relative to the universal quiver Grassmannian $\pi: Y_\mathbf{e}\rightarrow R_\mathbf{d}$). 
\begin{lemma}
$B_{\mathbf e}$ does not depend on $N$, provided $N>e_n$.  
\end{lemma}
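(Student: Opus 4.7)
\medskip

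\noindent\textbf{Proof plan.} By Theorem~\ref{t2}, the orbits in $U_{flat,irr}$ are precisely the isomorphism classes of $Q$-representations $M$ of dimension vector $\mathbf{d}=(N,\dots,N)$ whose rank collection satisfies $r_{i,j}(M)\geq N-e_j+e_i$ for all $1\leq i<j\leq n$. The plan is to exhibit a bijection between such orbits and a collection of combinatorial data manifestly independent of $N$ (for $N>e_n$). The key observation is that the indecomposable $U_{1,n}$ is the only one of length $n$, so its multiplicity is free to absorb the $N$-dependence.

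Concretely, I would write every such $M$ uniquely as
\[
M=U_{1,n}^{m}\oplus M',\qquad m:=m_{1,n}(M),
\]
where $M'$ has no $U_{1,n}$ summand. Since $U_{1,n}$ has dimension vector $(1,\dots,1)$, the representation $M'$ has dimension vector $(N',\dots,N')$ with $N':=N-m$, and its rank collection satisfies $r_{i,j}(M)=m+r_{i,j}(M')$ for all $i<j$. Setting $s_{i,j}(M'):=N'-r_{i,j}(M')\geq 0$, the flat irreducible inequality $r_{i,j}(M)\geq N-e_j+e_i$ translates into
\[
s_{i,j}(M')\leq e_j-e_i\qquad (1\leq i<j\leq n).
\]
Specializing to $(i,j)=(1,n)$ and using $r_{1,n}(M')=m_{1,n}(M')=0$, this forces $N'=s_{1,n}(M')\leq e_n-e_1$.

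Conversely, given any non-negative integer $N'\leq e_n-e_1$ together with an isomorphism class of representation $M'$ of dimension vector $(N',\dots,N')$ containing no $U_{1,n}$ summand and satisfying $s_{i,j}(M')\leq e_j-e_i$ for all $i<j$, the representation $M=U_{1,n}^{\,N-N'}\oplus M'$ has dimension vector $\mathbf{d}$ and lies in $U_{flat,irr}$; the hypothesis $N>e_n$ guarantees $N-N'\geq N-e_n+e_1>0$, so the multiplicity of $U_{1,n}$ is a well-defined non-negative integer. These two constructions are mutually inverse, yielding a bijection between $G_\mathbf{d}$-orbits in the flat irreducible locus and pairs $(N',[M'])$ subject to the conditions above. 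Since neither the bound $N'\leq e_n-e_1$ nor the constraints on $M'$ involve $N$, the cardinality $B_\mathbf{e}$ is independent of $N$.

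The argument is essentially a bookkeeping reduction and I do not anticipate any serious obstacle; the only small point to verify carefully is that the flat irreducible inequalities, once transported to $M'$, do not secretly still depend on $N$ via some hidden monotonicity—this is ruled out by the explicit shift $r_{i,j}(M)=m+r_{i,j}(M')$, which makes the $N$-dependence sit entirely in the single multiplicity~$m$.
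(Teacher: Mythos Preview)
Your argument is correct. The paper's own proof is a one-liner: it simply observes that an orbit $\mathcal{O}_{\mathbf r}$ lies in $U_{flat,irr}$ if and only if $N-e_j+e_i\le r_{i,j}\le N$ for all $i<j$, so after replacing $r_{i,j}$ by the corank $c_{i,j}=N-r_{i,j}$ the constraints read $0\le c_{i,j}\le e_j-e_i$, which no longer involves $N$.

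Your approach is a representation-theoretic repackaging of exactly this shift: splitting off the summand $U_{1,n}^{m}$ and passing to $M'$ is precisely the move that replaces ranks by coranks, since $s_{i,j}(M')=N-r_{i,j}(M)$. The advantage of your formulation is that it makes the bijection explicit at the level of isomorphism classes and, in particular, makes transparent why the residual ``valid rank collection'' constraints (the non-negativity of the multiplicities $m_{i,j}$) are themselves $N$-independent: all of them live on $M'$, except $m_{1,n}(M)=N-N'$, whose non-negativity is guaranteed by $N>e_n\ge e_n-e_1\ge N'$. The paper leaves this last point implicit.
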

\begin{proof}
An orbit $\mathcal{O}_{{\mathbf r}}$ sits in the flat irreducible locus if and only if $r_{i,j}\ge r_{i,j}^1$ for all
pairs $i,j$ where $r_{i,j}^1=N-e_j+e_i$. Since $r_{i,j}^1$ can not exceed $N$, the number of orbits depends on
${\mathbf e}$, but not on $N$. 
\end{proof}

We consider the generating series 
$$B_n(x_1,\dots,x_n):=\sum_{\mathbf{e}=(e_1\leq\cdots\leq e_n)} B_{\mathbf e}\, x_1^{\overline{e}_1}x_2^{\overline{e}_2}x_3^{\overline{e}_3}\dots x_n^{\overline{e}_n}.$$
\begin{theorem}
We have
\[
B_n(x_1,\dots,x_n)=\prod_{i=1}^n (1-x_i)^{-1}\prod_{\emptyset\neq I\subseteq\{2,\dots,n\}} (1-x_I)^{-1},
\]
where for $I=\{i_1,\cdots,i_k\}$, $x_I:=x_{i_1}\cdots x_{i_k}$.
\end{theorem}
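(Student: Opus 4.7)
The plan is to construct a weight-preserving bijection between orbits in the flat irreducible locus (summed over all $\mathbf{e}$, each orbit carrying the monomial $x_1^{\overline{e}_1}\cdots x_n^{\overline{e}_n}$) and tuples of non-negative integers $(c_1,\ldots,c_n,(d_I)_{\emptyset\ne I\subseteq\{2,\ldots,n\}})$ weighted by $\prod_i x_i^{c_i}\prod_I x_I^{d_I}$, in such a way that $\overline{e}_i=c_i+\sum_{I\ni i}d_I$.

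By Theorem~\ref{t2} an orbit $\mathcal{O}_{\mathbf{r}}$ lies in the flat irreducible locus if and only if $r_{i,j}\ge r^1_{i,j}=N-e_j+e_i$. Passing to $t_{i,j}=N-r_{i,j}$ the conditions become $0\le t_{i,j}\le e_j-e_i$ together with the monotonicities $t_{i,j+1}\ge t_{i,j}$ and $t_{i-1,j}\ge t_{i,j}$, and the quadrangle inequality $t_{i-1,j}+t_{i,j+1}\ge t_{i,j}+t_{i-1,j+1}$ (equivalent to $m_{i,j}\ge 0$). Counting pairs $(t_{i,j},\overline{\mathbf{e}})$ satisfying these inequalities and carrying the monomial $x^{\overline{\mathbf{e}}}$ is equivalent to computing $B_n(x_1,\ldots,x_n)$.

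The bijection is built by exploiting the extension sequence $0\to P^{\mathbf{e}}\to M\to I^{\mathbf{f}}\to 0$ from the proof of Theorem~A. For every flat irreducible $M$ the quotient $M/U_{1,n}^{m_{1,n}(M)}$ has constant dimension vector, and its Krull--Schmidt decomposition distributes its indecomposable summands among ``primitive patterns'' indexed by subsets $I\subseteq\{2,\ldots,n\}$; the integer $d_I$ records the multiplicity of the pattern $I$, while the $c_i$'s absorb the residual slack in $\mathbf{e}$ once the $d_I$-contributions have been subtracted. Verifying the bijection amounts to writing closed formulas for $(t_{i,j})$ (equivalently, for the multiplicities $m_{i,j}$) in terms of $(c_i,d_I)$, and checking that monotonicity, convexity and the upper bound hold simultaneously. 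Once the bijection is in place, summing the weights factorizes as
\[
\sum_{(c_i,d_I)\ge 0}\prod_{i=1}^n x_i^{c_i}\prod_{\emptyset\ne I\subseteq\{2,\ldots,n\}}x_I^{d_I}=\prod_{i=1}^n(1-x_i)^{-1}\prod_{\emptyset\ne I\subseteq\{2,\ldots,n\}}(1-x_I)^{-1},
\]
which is the desired formula.

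The main obstacle is pinning down the precise combinatorial rule that attaches a subset $I$ to each indecomposable summand of $M/U_{1,n}^{m_{1,n}}$ and verifying that the inverse reconstruction produces a rank collection obeying every one of the inequalities. A cleaner alternative is an inductive argument: peeling off from $B_n$ the factors involving $x_n$ yields
\[
B_n(x_1,\ldots,x_n)=B_{n-1}(x_1,\ldots,x_{n-1})\cdot(1-x_n)^{-2}\prod_{\emptyset\ne J\subseteq\{2,\ldots,n-1\}}(1-x_nx_J)^{-1},
\]
and the task reduces to identifying the right-hand factor with the generating function for the number of flat irreducible $A_n$-orbits extending each fixed flat irreducible $A_{n-1}$-orbit under the natural restriction that forgets vertex $n$; this fiberwise count is where the representation-theoretic input is concentrated.
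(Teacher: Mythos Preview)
Your proposal is a plan, not a proof: you outline two strategies and explicitly leave both unexecuted (``the main obstacle is pinning down the precise combinatorial rule''; ``the task reduces to\dots''). That is the central gap. Your target parametrization by tuples $(c_1,\dots,c_n,(d_I)_{\emptyset\ne I\subseteq\{2,\dots,n\}})$ with $\overline e_i=c_i+\sum_{I\ni i}d_I$ is exactly what the paper arrives at: after the index shift $I\mapsto I-1$, your $d_I$'s are the paper's lattice points $f_I$ of the polytope $Q_{\mathbf e}=\{(f_I)\ge 0:\sum_{I\ni i}f_I\le\overline e_{i+1}\}$, and your $c_i$'s are the slack variables that arise when summing over $\mathbf e$. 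So the framework is the same; what is missing is the actual correspondence between flat-irreducible rank collections and these tuples. Your suggested mechanism --- read off a subset $I$ from each Krull--Schmidt summand of $M/U_{1,n}^{m_{1,n}}$ --- does not work as stated: the indecomposable summands are $U_{i,j}$'s indexed by \emph{intervals}, and there is no natural ``primitive pattern'' $I\subseteq\{2,\dots,n\}$ attached to a single $U_{i,j}$. The paper's construction is different and concrete: realise $M$ (up to isomorphism) by coordinate projections $f_s=\mathrm{pr}_{J_s}$, record for each basis index $k$ the set $L_k=\{s:k\in J_s\}$ of arrows where $v_k$ is killed, and set $f_I=\#\{k:L_k=I\}$. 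This is precisely the step you flag as the obstacle and do not supply.

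Your inductive alternative also matches the paper's route --- the identity you write is the paper's $B_n=B_{n-1}\cdot(1-x_n)^{-1}\prod_{J\subseteq\{2,\dots,n-1\}}(1-x_Jx_n)^{-1}$ with $x_\emptyset=1$ --- but again you stop at the reduction. The paper does not perform the fibrewise orbit count on the representation side either; it first passes to the polytope $Q_{\mathbf e}$ and then runs the induction there via the projection onto the coordinates $f_I$ with $n-1\in I$. In both strategies the missing ingredient is therefore the same explicit $(f_I)$-correspondence; without it, neither your bijective nor your inductive outline constitutes a proof.
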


\begin{proof}
The proof is executed by induction on $n$. The  case $n=1$ is trivial. For $n=2$ one has $B_{\mathbf e}=e_2-e_1+1$ and
\[
B_n(x_1,x_2)=\sum_{0\le e_1 \le e_2} (e_2-e_1+1)x_1^{e_1} x_2^{e_2-e_1}=(1-x_1)^{-1}(1-x_2)^{-2}.
\]
 By induction, it suffices to show that
$$B_n(x_1,\dots,x_n)=B_{n-1}(x_1,\dots,x_{n-1})\times(1-x_n)^{-1}\prod_{I\subseteq\{2,\cdots,n-1\}}(1-x_Ix_n)^{-1},$$
where $x_\emptyset:=1$.

We fix the following notation:
\begin{itemize}
\item $\mathcal{R}$ is the set of rank collections $\mathbf{r}$ satisfying $\mathbf{r}^0\geq\mathbf{r}\geq\mathbf{r}^1$;
\item $\mathcal{P}_{n-1}$ is the power set on $\{1,2,\cdots,n-1\}$, $\mathcal{P}_{n-1}^*:=\mathcal{P}_{n-1}\setminus\{\emptyset\}$ and for $1\leq i\leq n-1$, $\mathcal{P}_{n-1}^i:=\{I\in\mathcal{P}_{n-1}\mid i\in I\}$;
\item $Q_\mathbf{e}$ is the polytope
$$Q_{\mathbf{e}}:=\left\{(f_I)\in\mathbb{R}_{\geq 0}^{\mathcal{P}_{n-1}^*}\mid\sum_{I\in\mathcal{P}_{n-1}^i}f_I\leq e_{i+1}-e_i,\ i=1,\cdots,n-1\right\};$$
\item for a polytope $P\subseteq\mathbb{R}^k$, we denote $P^{\mathbb{Z}}:=P\cap\mathbb{Z}^{k}\subset P$ the set of lattice points.
\end{itemize}

First notice that by Theorem \ref{t2}, $B_\mathbf{e}=\#\mathcal{R}$.  By definition, $Q_{\mathbf{e}}$ depends only on the mutual differences $\overline{e}_{i+1}$; we sometimes denote by $\overline{\mathbf{e}}=(\overline{e}_1,\overline{e}_2,\cdots,\overline{e}_n)$ the dimension vector of those differences. A rank collection $\mathbf{r}=(r_{i,j})$ satisfies this condition if and only if for $i=1,\cdots,n-1$, $r_{i,i+1}\geq N-\overline{e}_{i+1}$: the conditions posed on $r_{i,j}$ are implications of those on $r_{i,i+1}$.

We claim that there exists a bijection between $\mathcal{R}$ and $Q_\mathbf{e}^\mathbb{Z}$. To show this it suffices to establish two mutually inverse maps. 

\begin{itemize}
\item Given $\mathbf{f}:=(f_I)\in Q_\mathbf{e}^\mathbb{Z}$, we define for $1\leq i<j\leq n$ 
$$r_{i,j}(\mathbf{f}):=N-\sum_{I\in\mathcal{P}_{n-1}^*,\, I\cap[i,j]\neq\emptyset}f_I.$$
The defining inequalities of $\mathbf{f}$ imply that for any $i=1,2,\cdots,n-1$ one has $r_{i,i+1}(\mathbf{f})\geq N-\overline{e}_{i+1}$. This gives a rank collection in $\mathcal{R}$.
\item Conversely, let $\mathbf{r}\in\mathcal{R}$ be a rank collection. Let $(\mathrm{pr}_{J_1},\cdots,\mathrm{pr}_{J_{n-1}})\in R_\mathbf{d}$ be a projection sequence having rank collection $\mathbf{r}$. By assumption, $J_k\subset\{1,2,\cdots,N\}$ and $\#J_k\leq \overline{e}_{k+1}$. We associate to this projection sequence a point $\mathbf{f}=(f_I)\in Q_{\mathbf{e}}$ in the following way: for $k=1,\cdots,N$, we denote 
$$L_k:=\{s\mid 1\leq s\leq n-1,\ k\in J_s\}$$
and 
$$f_I:=\#\{k\mid 1\leq k\leq N,\ I=L_k\}.$$
It is clear that $f_I$ does not depend on the choice of the projection sequence. To show they give mutually inverse maps, it suffices to notice that for a projection sequence $(\mathrm{pr}_{J_1},\cdots,\mathrm{pr}_{J_{n-1}})$, the rank $r_{i,j}=N-\#J_i\cup\cdots\cup J_{j-1}$ and 
$$\#J_i\cup\cdots\cup J_{j-1}=\#\bigcup_{\substack{I\in\mathcal{P}_{n-1}^*\\ I\cap[i,j]\neq\emptyset}}\{k\mid 1\leq k\leq N,\ I=L_k\}.$$

\end{itemize}

For $\mathbf{f}\in Q_\mathbf{e}$ and $1\leq i\neq j\leq n-1$, we denote
$$\Sigma_i(\mathbf{f}):=\sum_{I\in\mathcal{P}_{n-1}^i}f_I,\ \ \mathcal{P}_{n-1}^{i,j}=\mathcal{P}_{n-1}^{i}\cap \mathcal{P}_{n-1}^{j},\ \text{and} \ \Sigma_{i,j}(\mathbf{f}):=\sum_{I\in\mathcal{P}_{n-1}^{i,j}}f_I.$$

Then 
$$B_n(x_1,\cdots,x_n)=\sum_{\mathbf{e}=(e_1\leq\cdots\leq e_n)\in\mathbb{N}^n}\#Q_\mathbf{e}^\mathbb{Z}\,x^{\overline{\mathbf{e}}}=\sum_{\substack{\Sigma_i(\mathbf{f})\leq \overline{e}_{i+1}\\ i=1,\cdots,n-1}}x^{\overline{\mathbf{e}}}.
$$

We consider the projected and the fibre polytopes. Let $\pi:\mathbb{R}^{\mathcal{P}_{n-1}^*}\ra\mathbb{R}^{\mathcal{P}_{n-1}^{n-1}}$ be the linear projection induced by the inclusion $\mathcal{P}_{n-1}^{n-1}\subseteq\mathcal{P}_{n-1}^*$. We denote $Q_{\mathbf{e},n-1}:=\pi(Q_\mathbf{e})$, and for $\mathbf{g}\in Q_{\mathbf{e},n-1}$, the fibre polytope is denoted by $Q_\mathbf{e}(\mathbf{g}):=\pi^{-1}(\mathbf{g})\cap Q_\mathbf{e}$.

By rearranging the sum we have
$$\sum_{\overline{e}_n\geq 0}\sum_{\mathbf{g}\in Q_{\mathbf{e},n-1}}\left(\sum_{\overline{e}_1,\cdots,\overline{e}_{n-1}\geq 0} \sum_{\mathbf{h}\in Q_\mathbf{e}(\mathbf{g})} x_1^{\overline{e}_1}x_2^{\overline{e}_2-\Sigma_{1,n-1}(\mathbf{g})}\cdots x_{n-1}^{\overline{e}_{n-1}-\Sigma_{n-2,n-1}(\mathbf{g})}\right)\times
$$
$$\times x_2^{\Sigma_{1,n-1}(\mathbf{g})}\cdots x_{n-1}^{\Sigma_{n-2,n-1}(\mathbf{g})}x_n^{\overline{e}_n}.$$

The bracket in the middle gives $B_{n-1}(x_1,\cdots,x_{n-1})$. It suffices to evaluate the sum
$$\sum_{\overline{e}_n\geq 0}\sum_{\mathbf{g}\in Q_{\mathbf{e},n-1}}x_2^{\Sigma_{1,n-1}(\mathbf{g})}\cdots x_{n-1}^{\Sigma_{n-2,n-1}(\mathbf{g})}x_n^{\overline{e}_n},$$
which can be written into 
$$\sum_{\substack{g_I\geq 0\\ I=\{i_1<\cdots<i_k\}\in\mathcal{P}_{n-1}^{n-1}}}\left(\sum_{\overline{e}_n-\Sigma_{n-1}(\mathbf{g})\geq 0}x_n^{\overline{e}_n-\Sigma_{n-1}(\mathbf{g})}\right)(x_{i_1+1}\cdots x_{i_k+1})^{g_I}.$$
Notice that in the first sum, $i_k=n-1$ hence the last variable $x_{i_k+1}=x_n$. The sum in the middle bracket gives $(1-x_n)^{-1}$; for the remaining summation, it suffices to notice that the variables $g_I$ are independent, hence we obtain
$$(1-x_n)^{-1}\prod_{I\subseteq\{2,\cdots,n-1\}}(1-x_Ix_n)^{-1},$$
and the proof terminates.
\end{proof}

From \cite[Section 4.2]{CFFFR}, the Bell numbers can be recovered as
$$\left.\frac{\partial^n}{\partial x_1\cdots\partial x_n}\right|_{x_1=\cdots=x_n=0}B_n(x_1,\cdots,x_n).$$
In fact, the coefficient in front of $x_1\dots x_n$ in $B_n$ is equal to the number of orbits in the flat
irreducible locus corresponding to the case of complete flags ($e_1=1,\dots,e_n=n$).

\section{Homogeneous coordinate rings: flat locus}\label{Sec:Plucker}
We start with linear degenerations of the \emph{complete} flag variety. 
Thus, $Q$ denotes the equi-oriented quiver of type $A_n$, $N=n+1$, $\mathbf{d}=(n+1, n+1, \cdots, n+1)\in\mathbb{N}^n$ and $\mathbf{e}=(1,2,\cdots, n)$. Moreover, $\pi: Y_\mathbf{e}\rightarrow R_\mathbf{d}$ is the universal quiver Grassmannian whose generic fiber is the complete flag variety  of dimension $\frac{n(n+1)}{2}$, and all other fibers are quiver Grassmannians $\Gr_\mathbf{e}(M)$ where $M\in R_\mathbf{d}$.
We consider the Pl\"ucker embedding 
${\rm Gr}_{\mathbf e}(M)\subset \prod_{i=1}^n {\mathbb P} (\Lambda^{i} M_i)$. 
Our goal is to describe the reduced scheme structure of the embedded Grassmannian in the flat irreducible locus, i.e. to describe 
the ideal of multi-homogeneous polynomials vanishing on the image of Grassmannians in an orbit degenerating
to $\mathcal{O}_{{\mathbf r}^1}$. The strategy is as follows:
first, we give explicit set of Pl\"ucker-like quadratic relations.
Second, we show that for any orbit $\mathcal{O}$ degenerating to $\mathcal{O}_{{\mathbf r}^1}$ there exists a point $M\in \mathcal{O}$ such
that these relations are enough to express any monomial (in Pl\"ucker coordinates) from the coordinate ring of 
${\rm Gr}_{\mathbf e}(M)$ in terms of PBW semi-standard monomials. This would imply that our quadratic relations indeed provide the
reduced scheme structure due to the fact that the number of PBW semi-standard monomials of shape $\lambda$ is equal to
the dimension of the irreducible $\mathrm{SL}_N$ module of highest weight $\lambda$ (recall that degenerations
over $\mathcal{O}_{{\mathbf r}^1}$ -- even over $\mathcal{O}_{{\mathbf r}^2}$ -- are flat).        

\begin{remark}
The results in the following two subsections hold for the whole flat locus. In particular, the set-theoretic equality 
(Proposition \ref{set-theoretic}) of the quiver Grassmannian and the vanishing set of the Pl\"ucker-like quadratic relations 
are true for the whole flat locus. In Section \ref{bases-ring}, the crucial ingredient is the existence of a special point in 
every orbit (Lemma \ref{canon}), which can be shown to exist for orbits in the flat, irreducible locus and a few other orbits 
(see Remark \ref{Remarkr2}). Nevertheless, we conjecture that Theorem \ref{ssPBW} extend to the whole flat locus. 
\end{remark}

\subsection{Degenerate Pl\"ucker relations for the complete flags}
We first fix some notation:
\begin{enumerate}
\item for $n\in\mathbb{N}_{>0}$, $[n]=\{1,2,\ldots,n\}$;
\item $I(d,n)$ is the set of all $d$-element subsets of $[n]$; $T(d,n)$ is the set of $d$-tuples $(j_1,\ldots,j_d)$ with $1\leq j_1,\ldots,j_d\leq n$ pairwise distinct.
\end{enumerate}

We fix a basis $\{v_1,v_2,\ldots,v_{n+1}\}$ to identify $V$ with $\mc^{n+1}$. 
Let $I_1,\ldots,I_{n-1}$ be subsets of $[n+1]$ and $\pr_{I_k}:\mc^{n+1}\ra\mc^{n+1}$ be the projection along basis elements indexed by $I_k$.
Let $M$ be the following representation of $Q$:
$$
M:\xymatrix{
\mc^{n+1}\ar^-{\pr_{I_1}}[r]&\mc^{n+1}\ar^-{\pr_{I_2}}[r]&\ar[r]\cdots\ar^-{\pr_{I_{n-1}}}[r]&\mc^{n+1}.
}
$$
Assume that $I_1,\ldots,I_{n-1}$ are chosen such that the dimension of the quiver Grassmannian $\dim\Gr_{\mathbf{e}}(M)=\frac{n(n+1)}{2}$ is minimal (i.e. $M\in U_{flat}$).

We fix the Pl\"ucker embedding of the quiver Grassmannian:
$$\Gr_{\mathbf{e}}(M)\hookrightarrow \Gr_1(\mc^{n+1})\times\Gr_2(\mc^{n+1})\times\cdots\times\Gr_n(\mc^{n+1})
\hookrightarrow\prod_{k=1}^n\mathbb{P}(\Lambda^k\mc^{n+1}).$$
For $I\in I(d,n+1)$, let $X_I$ be the Pl\"ucker coordinate on $\Gr_d(\mc^{n+1})$. Let $\mathcal{A}=\mc[X_I\mid I\in I(d,n+1)
\text{ for some }1\leq d\leq n]$ and $\mathcal{A}_t:=\mathcal{A}[t]$.

We first introduce the deformed Pl\"ucker relations with respect to a set $\emptyset\neq K\subset [n+1]$. For $J\in I(r,n+1)$, we define $\deg_K(X_J):=\#(K\cap J)$.

For $J=(j_1,j_2,\ldots,j_r)\in T(r,n+1)$, $L=(l_1,l_2,\ldots,l_s)\in T(s,n+1)$ with $1\leq s<r\leq n$ and $1\leq k\leq s$, we denote
$$R_{J,L;k}^K(t):=t^{-m(J,L,K)}\left(t^{\deg_K(X_L)}X_JX_L-\sum_{1\leq\alpha_1<\cdots<\alpha_k\leq r}t^{\deg_K(X_{L_\alpha})}X_{J_\alpha}X_{L_\alpha}\right)\in\mathcal{A}_t,$$
where for $\alpha=(\alpha_1,\cdots,\alpha_k)$ with $1\leq \alpha_1<\cdots<\alpha_k\leq r$, 
$$J_\alpha=(j_1,\cdots,j_{\alpha_1-1},l_1,j_{\alpha_1+1},\cdots,j_{\alpha_2-1},l_2,j_{\alpha_2+1},\cdots,j_r),$$ 
$$L_\alpha=(j_{\alpha_1},j_{\alpha_2},\cdots, j_{\alpha_k},l_{k+1},\cdots,l_s);$$ 
and 
$$m(J,L,K)=\min\{\deg_K(X_L),\deg_K(X_{L_\alpha})\mid 1\leq\alpha_1<\cdots<\alpha_k\leq r\}.$$

In particular, when the projection sequence $\mathbf{I}=(I_1,I_2,\ldots,I_{n-1})$ is given, we define for $1\leq s<r\leq n$ a set $K(s,r)\subset [n+1]$ by:
$$K(s,r):=I_s\cup I_{s+1}\cup\cdots\cup I_{r-1}.$$
Then $\pr_{K(s,r)}=\pr_{I_{r-1}}\circ\cdots\circ\pr_{I_s}:\mc^{n+1}\ra\mc^{n+1}$.

\begin{definition}\label{ideal}
Let $\mathfrak{I}_\mathbf{I}$ be the ideal in $\mathcal{A}$ generated by the following relations: 
\begin{enumerate}
\item[(P1)] Pl\"ucker relations in $\Gr_k(\mc^{n+1})$ for $1\leq k\leq n$;
\item[(P2)] for any $1\leq s<r\leq n$, $J\in T(r,n+1)$, $L\in T(s,n+1)$ and $1\leq k\leq\max\{1,\#(L\backslash K(s,r))\}$, 
the relation $R_{J,L;k}^{K(s,r)}(0)$.
\end{enumerate}
\end{definition}

Let $X_\mathbf{I}=V(\mathfrak{I}_\mathbf{I})$ denote the vanishing locus of $\mathfrak{I}_\mathbf{I}$ 
in $\prod_{k=1}^n\mathbb{P}(\Lambda^k\mc^{n+1})$.

\begin{remark}\label{Rmk:Pluecker}
In the study of these relations, we can always assume that $L$ is not contained in $K$ for $K=K(s,r)$. Under the assumption $\dim\Gr_\mathbf{e}(M)=\frac{n(n+1)}{2}$, we have $\#K\leq r-s+1$. If $J\subseteq K$, $s$ must be $1$ and hence $J=K$. In this case $L\subset K$ will make the relation $R_{J,L;1}^K(0)$ to be empty. 
\par
Without loss of generality we can assume that $j_1\notin K$. If $L\subseteq K$ we denote $\widetilde{J}:=(l_1,j_2,\cdots,j_r)$ and $\widetilde{L}:=(j_1,l_2,\cdots,l_s)$, then $\widetilde{L}$ is not contained in $K$ and $R_{J,L;1}^K(0)=-R_{\widetilde{J},\widetilde{L};1}^K(0)$.
\end{remark}

\begin{proposition}\label{set-theoretic}
The set $\Gr_{\mathbf{e}}(M)$ coincides with $X_\mathbf{I}$.
\end{proposition}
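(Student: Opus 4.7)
The plan is to prove both set-theoretic inclusions $\Gr_\mathbf{e}(M)\subseteq X_\mathbf{I}$ and $X_\mathbf{I}\subseteq \Gr_\mathbf{e}(M)$ using explicit Pl\"ucker-coordinate manipulations, leveraging the elementary identity
$$X_L(\pr_K(U_s)) = X_L(U_s)\cdot[L\cap K=\emptyset],$$
which expresses that the rows of any matrix spanning $\pr_K(U_s)$ indexed by $K$ vanish.

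For the forward inclusion, take a flag $(U_1,\ldots,U_n)\in\Gr_\mathbf{e}(M)$. Its Pl\"ucker coordinates satisfy (P1) automatically since each $U_k$ is a genuine subspace of $\mc^{n+1}$. For (P2), the arrow conditions $\pr_{I_k}(U_k)\subseteq U_{k+1}$ compose (the $\pr_{I_k}$ are commuting idempotents on $\mc^{n+1}$) to yield $\pr_{K(s,r)}(U_s)\subseteq U_r$ for all $s<r$. Substituting the above identity into the standard Young-exchange Pl\"ucker relation for this incidence produces exactly $R^{K(s,r)}_{J,L;k}(0)=0$ in the regime $m(J,L,K)=0$. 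The cases $m(J,L,K)\geq 1$, where the naive substitution collapses to the tautology $0=0$, are handled by the antisymmetry swap of Remark~\ref{Rmk:Pluecker}, which identifies such a relation (up to a sign) with one having $m=0$.

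For the reverse inclusion, a point $(x_I)\in X_\mathbf{I}$ satisfies (P1), which forces each $(x_I)_{I\in I(k,n+1)}$ to arise from a genuine $k$-dimensional subspace $U_k\subseteq \mc^{n+1}$. It then remains to check the defining condition $\pr_{I_s}(U_s)\subseteq U_{s+1}$ for every $s$. Applying the consecutive (P2) relations $R^{I_s}_{J,L;1}(0)$ (taking $r=s+1$, $K=I_s$) under the same substitution for $X_L(\pr_{I_s}(U_s))$ recovers precisely the standard incidence Pl\"ucker relations that cut out the locus $\{(U_s,U_{s+1}):\pr_{I_s}(U_s)\subseteq U_{s+1}\}$ inside $\Gr_s(\mc^{n+1})\times\Gr_{s+1}(\mc^{n+1})$, so the tuple $(U_1,\ldots,U_n)$ lies in $\Gr_\mathbf{e}(M)$.

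The main obstacle is the treatment of the non-consecutive (P2) relations in the forward direction. When $r-s\geq 2$ and $m(J,L,K(s,r))\geq 1$, the naive substitution argument from $\pr_{K(s,r)}(U_s)\subseteq U_r$ does not directly produce the relation, and one must either iterate the antisymmetry reduction of Remark~\ref{Rmk:Pluecker} to drop to the $m=0$ case, or, more uniformly, deploy a deformation argument: introduce the one-parameter diagonal family $\phi^t\in\End(\mc^{n+1})$ acting as multiplication by $t$ on basis vectors indexed by $K(s,r)$ and as the identity elsewhere. For $t\neq 0$, the incidence $\phi^t(U_s)\subseteq U_r$ defines a classical flag-type variety whose standard Pl\"ucker relation, rewritten using $X_L(\phi^t U_s)=t^{\deg_K L}X_L(U_s)$, specializes as $t\to 0$ to exactly $R^{K(s,r)}_{J,L;k}(0)=0$, and continuity along the family transports the vanishing to the limit fiber $\Gr_\mathbf{e}(M)$.
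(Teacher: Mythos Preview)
Your forward inclusion via the substitution $X_L(\pr_K(U_s))=[L\cap K=\emptyset]\,X_L(U_s)$ correctly yields the $m(J,L,K)=0$ relations. The deformation idea for $m\geq 1$ can be made rigorous, but ``continuity'' alone is not enough: you need the special fibre $Z_0=\{(U_s,U_r):\pr_K(U_s)\subseteq U_r\}$ to lie in the closure of $\bigcup_{t\neq 0}\{t\}\times Z_t$ inside $\mathbb{A}^1\times\Gr_s\times\Gr_r$, which amounts to flatness of this two-factor family at $t=0$. This does hold under the standing hypothesis $\#K(s,r)\leq r-s+1$, but it must be argued. Note also that Remark~\ref{Rmk:Pluecker} is insufficient on its own: it treats only $k=1$ with $L\subseteq K$, whereas $m>0$ already occurs for $L\not\subseteq K$ whenever some $l_i$ with $i>k$ lies in $K$. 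The paper bypasses all of this: after permuting $L$ so that $l_1,\ldots,l_k\notin K$ (possible since $k\leq\#(L\setminus K)$), it invokes the explicit minor computation of \cite{Feigin1,Feigin2}.

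The genuine gap is in your reverse inclusion. Your substitution identifies the $m=0$ consecutive (P2) relations with the classical incidence relations for the pair $(\pr_{I_s}(U_s),U_{s+1})$, but those classical relations cut out $\{W_s\subseteq W_{s+1}\}$ only as a subvariety of $\Gr_s\times\Gr_{s+1}$. When $U_s\cap E_{I_s}\neq 0$ the tuple $\bigl([L\cap I_s=\emptyset]\,X_L(U_s)\bigr)_L$ is identically zero and every such relation collapses to $0=0$, imposing no constraint on $U_{s+1}$. Concretely, with $I_s=\{1\}$, $s=2$, $U_2=\mathrm{span}(v_1,v_2)$, all $m=0$ relations vanish for arbitrary $U_3$, yet $\pr_{I_s}(U_2)=\mathrm{span}(v_2)\subseteq U_3$ is a nontrivial condition. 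You give no argument that the $m\geq 1$ relations close this gap. The paper handles precisely this situation by contrapositive: given $\pr_K(V_s)\not\subseteq V_r$, it chooses a basis $e_1,\ldots,e_s$ of $V_s$ adapted to the decomposition $E_K\oplus E_{K^c}$ (with $e_{t+1},\ldots,e_s\in V_s\cap E_K$), extracts a tuple $L=(l_1,\ldots,l_s)$ with $l_1\notin K$ and $X_L(V_s)\neq 0$, and then verifies that in $R_{J,L;1}^K(0)$ every surviving term $X_{J_\alpha}X_{L_\alpha}$ with $j_\alpha\notin K$ vanishes on $V_s$, so that $R_{J,L;1}^K(0)(\mathbf{x})=X_JX_L\neq 0$.
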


\begin{proof}
We first show that $\Gr_{\mathbf{e}}(M)\subset X_\mathbf{I}$. Let $\mathbf{x}=(V_1,V_2,\ldots,V_n)\in \Gr_{\mathbf{e}}(M)$ and 
$1\leq s<r\leq n$. For $J=(j_1,j_2,\ldots,j_r)\in T(r,n+1)$, $L=(l_1,l_2,\ldots,l_s)\in T(s,n+1)$, $K=K(s,r)$ and 
$1\leq k\leq\#(L\backslash K)$, one needs to show that $R_{J,L;k}^K(0)$ vanishes on $\mathbf{x}$. Since $k\leq\#(L\backslash K)$, 
by arranging elements in $L$ we can always assume that $l_1,\cdots,l_k\notin K$. With this assumption, the proof of Theorem 3.13 
in \cite{Feigin2} (or Proposition 2.2 in \cite{Feigin1}) can be applied.
\par
To show the other inclusion, we take $\mathbf{x}=(V_1,\cdots,V_n)\notin\Gr_{\mathbf{e}}(M)$ and construct a relation $R_{J,L;k}^K(0)(\mathbf{x})\neq 0$. According to the assumption, there exist $V_1\in\Gr_s(\mc^{n+1})$ and $V_2\in\Gr_r(\mc^{n+1})$ for $1\leq s<r\leq n$ such that $\pr_K(V_1)\nsubseteq V_2$. 
\par
We prove that $\mathbf{x}\notin X_\mathbf{I}$.
\par
Assume that $E_K=\sspan\{v_k\mid k\in K\}$ and $E_{K^c}=\sspan\{v_l\mid l\in [n+1]\backslash K\}$. We choose a basis $\{e_1,e_2,\ldots,e_s\}$ of $V_1$ in the following way: $e_{t+1},\ldots,e_s$ is a basis of $V_1\cap E_K$, then extend it to a basis $e_1,\ldots,e_s$ of $V_1$. Up to base changes in $E_K$ and $E_{K^c}$ we can assume that 
$$e_1=v_{l_1}+w_1,\ \ldots,\ e_t=v_{l_t}+w_t,\ \ e_{t+1}=v_{l_{t+1}},\ \ldots,\ e_s=v_{l_s},$$
where $w_1,\ldots,w_t\in V_1\cap E_K$. As $V_1\nsubseteq E_K$, we can assume that $l_1\notin K$.
\par
We denote $L=(l_1,\ldots,l_s)$, then $X_L(V_1)\neq 0$. There exists a tuple $J=(j_1,\ldots,j_r)$ such that $X_J(V_2)\neq 0$. 
\par
We consider the relation $t^{m(J,L,K)}R_{J,L;1}^K(t)$:
$$t^{m(J,L,K)}R_{J,L}^K(t):=t^{\#(L\cap K)}X_JX_L-\sum_{1\leq\alpha\leq r}t^{\#(L_\alpha\cap K)}X_{J_\alpha}X_{L_\alpha},$$
where $J_\alpha=(j_1,\cdots,j_{\alpha-1},l_1,j_{\alpha+1},\cdots,j_r)$ and $L_\alpha=(j_\alpha,l_2,\cdots,l_s)$. Since $l_1\notin K$, by definition, 
$$R_{J,L;1}^K(0)=X_JX_L-\sum_{1\leq\alpha\leq r,\,j_\alpha\notin K}X_{J_\alpha}X_{L_\alpha}.$$

We claim that for any $1\leq \alpha\leq r$ with $j_\alpha\notin K$, 
$$X_{j_\alpha,l_2,\ldots,l_s}(V_1)=0.$$
As $V_1\subset \sspan(\{v_{l_1},\ldots,v_{l_s}\}\cap\{v_k\mid k\in K\})$, it suffices to show that for $j_\alpha\in\{l_1,\ldots,l_t\}$ the above equality holds. But in this case the corresponding Pl\"ucker relation is empty.
\par
As a conclusion, $R_{J,L;1}^K(0)(\mathbf{x})=X_JX_L(\mathbf{x})\neq 0$.
\end{proof}

\begin{example}
Consider $M=\xymatrix{\mc^{4}\ar^-{\pr_{1,2}}[r]&\mc^{4}\ar^-{\pr_{2,3}}[r]&\mc^{4}}$: $\Gr_{\mathbf{e}}(M)$ is the mf-linear degenerate flag variety. The defining ideal is given by:
$$X_{12}X_3,\ \ X_{12}X_4,\ \ X_{13}X_4-X_{14}X_3,\ \ X_{23}X_4-X_{24}X_3,$$
$$X_{123}X_4,\ \ X_{123}X_{14},\ \ X_{123}X_{24}+X_{234}X_{12},\ \ X_{123}X_{34}+X_{234}X_{13},\ \ X_{234}X_{14},$$
$$X_{12}X_{34}-X_{13}X_{24}+X_{14}X_{23}.$$
\end{example}

\subsection{Straightening law}
We assume that $\mathbf{I}=(I_1,\cdots,I_{n-1})$ with $I_k\subset \{k,k+1\}$, then $K(s,r)\subset\{s,s+1,\cdots,r\}$. Recall that 
$$
\xymatrix{
M=\mc^{n+1}\ar^-{\pr_{I_1}}[r]&\mc^{n+1}\ar^-{\pr_{I_2}}[r]&\ar[r]\cdots\ar^-{\pr_{I_{n-1}}}[r]&\mc^{n+1}
}
$$
and $\dim\Gr_\mathbf{e}(M)=\frac{n(n+1)}{2}$.

A PBW semi-standard Young tableau \cite{Feigin2} of shape $\la=\sum_{i=1}^{N-1} m_i\om_i$ is a filling $T_{i,j}$ of the Young tableau
with $m_i$-columns of length $i$ ($i=1,\dots,N-1$) such that the following conditions are satisfied ($l_j$ denotes the length
of the $j$-th column):
\begin{itemize}
\item if $T_{i,j}\le l_j$, then $T_{i,j}=i$;
\item if $T_{i_1,j}, T_{i_2,j}>l_j$, then $i_1<i_2$ implies $T_{i_1,j}>T_{i_2,j}$;
\item for any $j>1$, $i\le l_j$ there exists $i_0\le l_{j-1}$ such that $T_{i_0,j-1}\ge T_{i,j}$. 
\end{itemize}
We call a monomial in Pl\"ucker coordinates PBW semi-standard if it corresponds to a PBW semi-standard Young tableau.

\begin{proposition}\label{P1P2}
The relations $(P1)$ and $(P2)$ are enough to express any monomial in Pl\"ucker coordinates 
on $\Gr_\mathbf{e}(M)$ as a linear combination of the  PBW semi-standard monomials.
\end{proposition}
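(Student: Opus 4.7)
The plan is to adapt Feigin's straightening argument for the classical PBW degenerate flag variety to the more general projection data $\mathbf{I}=(I_1,\ldots,I_{n-1})$ with $I_k\subset\{k,k+1\}$. More precisely, I would fix a linear order on monomials in Pl\"ucker coordinates so that every non-PBW-semi-standard monomial can be rewritten, modulo the relations (P1) and (P2), as a linear combination of strictly smaller monomials; the statement then follows by Noetherian induction on this order.

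First, I would introduce a lexicographic order on multi-column tableaux, reading columns left to right and, within each column, rows top to bottom. Modulo the classical Pl\"ucker relations (P1), every monomial is congruent to one whose $j$-th column has its ``low'' entries $T_{i,j}=i$ for $i\leq l_j$ placed at the top and the remaining entries listed below. What is left to enforce are the two genuinely multi-column conditions of PBW semi-standardness: the strict anti-ordering of the tail entries within a column, and the adjacency condition between consecutive columns.

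Given a monomial that violates one of these conditions at a pair of columns of lengths $s<r$ (say columns $j$ and $j-1$), I would invoke relation (P2) with $K=K(s,r)=I_s\cup\cdots\cup I_{r-1}$. The restriction $I_k\subset\{k,k+1\}$ guarantees $K\subset\{s,s+1,\ldots,r\}$, which matches exactly the range of row indices where the PBW-semi-standard condition is non-trivial. Taking $L$ to be the tuple coming from the short column and $J$ from the long column, the positions of $L$ not lying in $K$ correspond precisely to the ``tail'' entries that detect the violation. The relation $R^{K}_{J,L;k}(0)$ then rewrites $X_JX_L$ as a sum $\sum_\alpha X_{J_\alpha}X_{L_\alpha}$, where only those exchanges $\alpha$ survive for which $\deg_K(X_{L_\alpha})$ meets the minimum (the terms raising the $K$-degree vanish at $t=0$); using Remark \ref{Rmk:Pluecker} to assume $L\not\subset K$, each surviving exchange can be checked to produce a tableau strictly smaller in the chosen order.

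The main obstacle is the precise combinatorial verification that, for every violating configuration, there is a choice of $(J,L,k)$ such that \emph{all} surviving terms $X_{J_\alpha}X_{L_\alpha}$ in $R^{K}_{J,L;k}(0)$ are strictly smaller than $X_JX_L$. This is the crux of Feigin's original argument in \cite{Feigin2}, and the role of the hypothesis $I_k\subset\{k,k+1\}$ is to make $K(s,r)$ compatible with the lex order used there, so that the classical case-by-case check transfers once the indexing conventions are aligned. Once this is in place, iterated reduction terminates at PBW semi-standard monomials, which completes the proof.
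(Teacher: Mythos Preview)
Your overall strategy coincides with the paper's: straighten via the (P2) relations, controlled by a total order on tableaux, using that $I_k\subset\{k,k+1\}$ forces $K(s,r)\subset\{s,\ldots,r\}$. The paper, however, does not defer the ``main obstacle'' you flag; it carries out that verification directly and this is really the content of the proof. Concretely, for a violating pair of PBW columns $A$ (length $r$) and $B=(l_1,\ldots,l_s)$ (length $s$), the paper picks the \emph{smallest} index $k_0$ such that $A_k<B_{k_0}$ for all $k\ge k_0$, observes that $B_{k_0}\ge r+1$ and that $l_1,\ldots,l_{k_0-1}$ are either $<s$ or $>r+1$, hence $l_1,\ldots,l_{k_0}\notin K(s,r)$. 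This single observation simultaneously (i) makes $R^{K(s,r)}_{A,B;k_0}(0)$ a legitimate (P2) relation (since $k_0\le\#(B\setminus K)$), (ii) guarantees $\deg_K(X_{L_\alpha})\ge\deg_K(X_L)$ for every exchange $\alpha$, so that $X_AX_B$ itself survives at $t=0$, and (iii) ensures all surviving terms are strictly smaller in the chosen total order.

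Two points where your sketch would need adjustment. First, your lexicographic order reads columns left-to-right and rows top-to-bottom; the paper's order reads the \emph{shortest} column first (largest column index) and, within a column, from the bottom up. The paper's choice is what makes step (iii) immediate: after the exchange the short column $L_\alpha$ is compared first, and the entry at the critical position has strictly decreased. With your order you would have to re-do this check, and it is not obvious it goes through as stated. Second, your plan to first normalise each column via (P1) and then treat only the inter-column condition is fine, but note that the paper simply assumes both columns are already PBW tableaux and handles the two-column reduction; the finiteness of tableaux of fixed shape then gives termination.
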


\begin{proof}
We consider the following total ordering defined on the set of tableaux of a fixed shape: for two tableaux $T^{(1)}$ and $T^{(2)}$: we say $T^{(1)}\geq T^{(2)}$, if there exists $(i,j)$ such that for any $(k,\ell)$ where either $\ell>j$ or $\ell=j$ and $k>i$, $T_{k,\ell}^{(1)}=T_{k,\ell}^{(2)}$ and $T_{i,j}^{(1)}>T_{i,j}^{(2)}$.

Assume that we have a non-PBW semi-standard Young tableau with two columns $A$ and $B$ representing the product of Pl\"ucker monomial $X_AX_B$ where $r=\ell(A)\geq\ell(B)=s$ such that both $A$ and $B$ are PBW tableaux. 

We assume that $k_0$ is the smallest index such that for any $k\geq k_0$, $A_k<B_{k_0}$. First notice that by the semi-standard property, $B_{k_0}\geq r+1$. Assume that $A=(j_1,\cdots,j_r)$ and $B=(l_1,\cdots,l_s)$, then $j_k<l_{k_0}$. Since $l_{k_0}\geq r+1$, $l_1,\dots,l_{k_0-1}$ are either strictly less than $s$ or strictly larger than $r+1$; this implies that 
$$\{l_1,\cdots,l_{k_0}\}\cap \{s,s+1,\cdots,r\}=\emptyset$$
and hence $l_1,\cdots,l_{k_0}\notin K(s,r)$.

We consider the relation $R_{A,B;k_0}^{K(s,r)}(0)$ from $(P2)$ exchanging the first $k_0$ indices in $B$ with an 
arbitrary $k_0$ elements in $A$: the resulting tableaux are strictly smaller in the total order on tableaux introduced above. 
Moreover, the monomial $X_AX_B$ appears in the relation: assume that $X_{A'}X_{B'}$ is a monomial obtained from the exchange, 
then $\#(B'\cap K)\geq \#(B\cap K)$. As there are only finitely number of tableaux of a fixed shape, this procedure will 
terminate after having been repeated finitely many times.
\end{proof}

\subsection{Bases in the coordinate rings}\label{bases-ring}
\begin{lemma}\label{canon}
a) Let $N=n+1$ and $e_i=i$, $i=1,\dots,n$. Then for an orbit $\mathcal{O}$ degenerating to $\mathcal{O}_{{\mathbf r}^1}$ 
there exists a point $M\in\mathcal{O}$ such that 
the defining maps ${\mathbf f}=(f_1,\dots,f_{n-1})$, $f_i:M_i\to M_{i+1}$ satisfy the following properties:
\begin{itemize}
\item $(f_i)_{a,b}=1$ if $a=b<i$ or $a=b>i+1$,
\item $(f_i)_{a,b}=0$ if $a,b<i$, $a\ne b$;
\item $(f_i)_{a,b}=0$ if $a,b>i+1$, $a\ne b$;
\item $(f_i)_{a,b}=0$ if $a>b$.
\end{itemize} 
b) For a partial flag variety case (arbitrary $N$, $e_1,\dots,e_n$) any orbit has a canonical form, which is a projection
of the canonical form for the complete flags (with the same $N$) forgetting all the components but the ones
numbered by $e_1,\dots,e_n$. 
\end{lemma}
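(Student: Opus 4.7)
The plan is to exploit the short exact sequence $0 \to P^{\mathbf{e}} \to M \to I^{\mathbf{f}} \to 0$, which, by the proposition recalled in Section~\ref{Sec:ProofThmA}, is available for every $M$ in the flat irreducible locus. Given such an $M$, I will exhibit a representative of its orbit by an explicit basis choice adapted to this extension.

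For part (a), I would first fix the standard basis $w_i^{(k)} \in (P_i)_k$ of $P^{\mathbf{e}} = \bigoplus_{i=1}^n P_i$ (defined for $i \leq k$, with $w_i^{(k)} \mapsto w_i^{(k+1)}$ under the structure map), and the analogous basis $u_i^{(k)} \in (I_i)_k$ of $I^{\mathbf{f}} = \bigoplus_{i=1}^n I_i$ (defined for $k \leq i$). I would then lift each $u_i^{(k)}$ to some $\tilde{u}_i^{(k)} \in M_k$ via a vector-space splitting of the exact sequence. The key step is the ordering of the basis of $M_k$: I put the $w$'s first at positions $1, \ldots, k$ and the $\tilde{u}$'s at positions $k+1, \ldots, n+1$,
\[
v_p^{(k)} := \begin{cases} w_p^{(k)}, & 1 \leq p \leq k, \\ \tilde{u}_{p-1}^{(k)}, & k+1 \leq p \leq n+1. \end{cases}
\]
A column-by-column inspection of $f_k$ in this basis then produces: $f_k(v_p^{(k)}) = v_p^{(k+1)}$ for $p \leq k$; $f_k(v_{k+1}^{(k)}) \in P^{\mathbf{e}}_{k+1} = \sspan\{v_1^{(k+1)}, \ldots, v_{k+1}^{(k+1)}\}$ (since $u_k^{(k)}$ is annihilated by the structure map of $I_k = U_{1,k}$); and $f_k(v_p^{(k)}) \equiv v_p^{(k+1)} \pmod{P^{\mathbf{e}}_{k+1}}$ for $p \geq k+2$. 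Reading off the matrix entries, each of the four listed properties falls out immediately.

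For part (b), I expect the same strategy to go through with $P^{\mathbf{e}} = \bigoplus_{i=1}^n P_i^{e_i - e_{i-1}}$ and $I^{\mathbf{f}} = \bigoplus_{i=1}^n I_i^{e_{i+1} - e_i}$: placing the $w$-basis at positions $1, \ldots, e_k$ and the lifted $\tilde{u}$-basis at positions $e_k+1, \ldots, N$ in $M_k$ produces a matrix of $f_k$ that is identity outside the block at rows/columns $e_k+1, \ldots, e_{k+1}$. This matrix is a specialization of the form obtained by composing $g_{e_{k+1}-1} \circ \cdots \circ g_{e_k}$ from a complete flag canonical form of part (a) at the same ambient $N$, which is precisely the ``projection'' description the lemma asserts.

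The step I expect to be the main obstacle is getting the placement of the middle block exactly right: the ordering of basis vectors at each vertex must be calibrated so that the width-$k$ $w$-part of $M_k$ terminates just before index $k+1$, placing the single $\tilde{u}$-vector killed modulo $P^{\mathbf{e}}_{k+1}$ at position $k+1$. Any other choice shifts the middle block away from $\{k, k+1\}$ and breaks the canonical form; once the ordering is correct, the verification of all four conditions is a short bookkeeping argument.
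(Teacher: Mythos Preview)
Your proof of part (a) is correct and takes a genuinely different route from the paper. The paper's argument is a one-line citation: the transversal slice $T$ through the flat irreducible locus constructed in \cite[Proposition~3]{CFFFR} (and recalled at the start of Section~\ref{Sec:LineBundles}) consists precisely of tuples $(f_1,\dots,f_{n-1})$ of the required shape, and transversality guarantees that $T$ meets every orbit in $U_{flat,irr}$. You instead use only the short exact sequence $0\to P^{\mathbf e}\to M\to I^{\mathbf f}\to 0$ supplied by the proposition in Section~\ref{Sec:ProofThmA}, which is a much lighter input, and build the canonical representative by hand via an adapted basis. This is more elementary and self-contained; the paper's route is shorter only because the heavy lifting was done in \cite{CFFFR}, and in exchange it yields the stronger statement that the canonical forms assemble into an affine space meeting all orbits transversally---which is what Section~\ref{Sec:LineBundles} actually needs later.

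For part (b) your sketch is on the right track, but the phrase ``identity outside the block at rows/columns $e_k{+}1,\ldots,e_{k+1}$'' is not quite accurate: columns $p>e_{k+1}$ can still carry nonzero entries in rows $1,\ldots,e_{k+1}$, exactly as in part (a). This does not violate the four bulleted conditions (they permit such entries), but it does mean the ``projection'' claim---that your partial-flag map $f_k$ arises as a composite $g_{e_{k+1}-1}\circ\cdots\circ g_{e_k}$ of complete-flag canonical maps---needs one more sentence. You must either exhibit the intermediate lifts explicitly, or check that every rank collection $\mathbf r\ge\mathbf r^1$ for the $A_n$ quiver is the restriction of some $\mathbf r'\ge(\mathbf r')^1$ for the $A_{N-1}$ quiver. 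The latter is easy from the rank inequalities, but it should be said.
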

\begin{proof} This follows immediately from the definition of a transversal slice to the flat irreducible locus given in \cite[Proposition 3]{CFFFR}.
\end{proof}

\begin{theorem}\label{ssPBW}
For any orbit $\mathcal{O}$ degenerating to $\mathcal{O}_{{\mathbf r}^1}$ there exists a point $M\in \mathcal{O}$ such
that the semi-standard PBW tableaux provide a basis in the homogeneous coordinate ring of ${\rm Gr}_{\mathbf e}(M)$.
\end{theorem}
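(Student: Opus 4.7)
The plan is to combine three ingredients already assembled in Section~\ref{Sec:Plucker}: the canonical form from Lemma~\ref{canon}, the straightening law of Proposition~\ref{P1P2}, and a Hilbert-function comparison via flatness. First, I would pick $M\in\mathcal{O}$ using Lemma~\ref{canon}. In the complete flag case this realizes $M$ as a sequence of projections $\mathbf{I}=(I_1,\ldots,I_{n-1})$ with $I_k\subset\{k,k+1\}$, so the setup of Section~4.2 applies verbatim. In the partial flag case the canonical representative is obtained by forgetting the components not indexed by $e_1,\dots,e_n$ from a complete flag canonical form.

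Next, Proposition~\ref{set-theoretic} identifies the quiver Grassmannian $\Gr_\mathbf{e}(M)$ with the zero locus $X_\mathbf{I}$ of the ideal $\mathfrak{I}_\mathbf{I}$, yielding a surjection $\mathcal{A}/\mathfrak{I}_\mathbf{I}\twoheadrightarrow\mathbb{C}[\Gr_\mathbf{e}(M)]$ onto the homogeneous coordinate ring. Proposition~\ref{P1P2} then asserts that modulo the relations $(P1)$ and $(P2)$ every Pl\"ucker monomial rewrites as a linear combination of PBW semi-standard monomials, so those monomials span the coordinate ring. For linear independence, the key observation is that since $M\in U_{flat,irr}$, the family $\pi$ is flat and irreducible at $M$, hence the multi-graded Hilbert function of $\Gr_\mathbf{e}(M)$ under the Pl\"ucker embedding coincides with that of the generic fiber $G/P$. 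By the classical Borel-Weil theorem, the $\lambda$-component (for $\lambda=\sum m_i\omega_{e_i}$) has dimension $\dim V_\lambda$. On the other hand, by the main result of \cite{Feigin2}, the number of PBW semi-standard Young tableaux of shape $\lambda$ is exactly $\dim V_\lambda$. The spanning set therefore has the correct cardinality, forcing it to be a basis.

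The main obstacle is that Proposition~\ref{P1P2} is phrased in the complete flag setting, so I need the straightening law to remain valid for the partial flag representative produced by Lemma~\ref{canon}(b). One way forward is to observe that the canonical form obtained by projection still has the property that its defining projections $\mathrm{pr}_{K(s,r)}$ lie in the appropriate subspace configuration, so the exchange argument of Proposition~\ref{P1P2} (choosing $k_0$ minimal and applying $R^{K(s,r)}_{J,L;k_0}(0)$) goes through unchanged. Alternatively, one can deduce the partial flag case from the complete flag case by composing the closed embedding $\Gr_\mathbf{e}(M)\hookrightarrow\Gr_{(1,2,\ldots,n)}(\widetilde M)$ induced by the canonical form with the known basis on the larger quiver Grassmannian, observing that PBW semi-standard tableaux of partial flag shape are exactly the restrictions of complete flag PBW tableaux to the columns of lengths $e_1,\dots,e_n$. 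Either route reduces the partial flag statement to the complete flag one, at which point the Hilbert-function argument concludes the proof.
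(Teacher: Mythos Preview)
There is a genuine gap. Your first sentence after ``The plan'' already goes wrong: Lemma~\ref{canon} does \emph{not} realize $M$ as a sequence of projections. The canonical form coming from the transversal slice has $f_i$ upper-triangular with unspecified entries $(f_i)_{p,q}$ for $2\le p\le i+1\le q\le n$ (the parameters $\lambda_{p,q}$); only for $t=0$ do all $f_i$ become projections. Worse, the weaker claim that \emph{some} point of $\mathcal{O}$ is a projection sequence with $I_k\subset\{k,k+1\}$ is false in general. Take $n=4$, $N=5$, and the orbit with $r_{i,j}=4$ for all $1\le i<j\le 4$ (this lies in $U_{flat,irr}$ since $r^1_{i,j}=5-(j-i)\le 4$). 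Any projection representative must have $I_1=I_2=I_3=\{a\}$ a common singleton, but $\{1,2\}\cap\{2,3\}\cap\{3,4\}=\emptyset$, so no choice with $I_k\subset\{k,k+1\}$ exists. Hence Proposition~\ref{P1P2}, whose hypothesis is precisely $I_k\subset\{k,k+1\}$, cannot be invoked verbatim, and your spanning argument collapses.

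What the paper does to bridge this gap is the step you are missing. It takes $M$ in the canonical (upper-triangular) form of Lemma~\ref{canon} and, for each fixed pair $p<q$, performs an \emph{upper-triangular} change of basis in $M_p$ and in $M_q$ so that the composite $f_{p,q}$ becomes a projection $\pr_I$ with $|I|\le q-p$; this is possible because $f_{p,q}$ itself is upper-triangular. In the new coordinates $Y$, the straightening relation of Proposition~\ref{P1P2} applies to $Y_AY_B$, while the upper-triangularity of the base change guarantees that $X_AX_B-Y_AY_B$ is a combination of monomials with strictly smaller total index sum, allowing an induction. Your Hilbert-function comparison via flatness is correct and is exactly how the paper concludes linear independence, but the spanning step genuinely requires this base-change argument, not a direct appeal to Proposition~\ref{P1P2}.
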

\begin{proof}
We consider a representation $M=(M_1,\dots,M_n)$ satisfying conditions of Lemma \ref{canon}. 
Let $f_{p,q}:M_p\to M_q$ be corresponding linear map. Let $\{v_1,\dots,v_N\}$ be the standard basis of $M_p$ and $M_q$
(the conditions from Lemma \ref{canon} are written for matrix elements of the maps $f_i$ in the basis $\{v_a\}$).
Since the orbit of $M$ degenerates to $\mathcal{O}_{{\mathbf r}^1}$, the corank of $f_{p,q}$ is at most $q-p$.
Let us choose a basis $\{v'_b\}$ of $M_p$ and $\{v''_b\}$ of $M_q$ such that the matrix of $f$ in these bases is   
$\pr_{I}$ for some $I$ with $|I|\le q-p$. Since the matrix $f_{p,q}$ in the basis $\{v_a\}$ is upper-triangular,
we may assume that the matrices expressing $\{v'_b\}$ and $\{v''_b\}$ in terms of the initial basis $\{v_a\}$ 
are both upper-triangular. 

Now assume we are given a non PBW semi-standard monomial $X_AX_B$, $|A|=p$, $|B|=q$ written in the coordinates corresponding to the
basis $\{v_a\}$. Let $Y_{A'}$, $Y_{B'}$ be the Pl\"ucker coordinates in the bases $\{v'_b\}$ and $\{v''_b\}$.
Then $X_AX_B-Y_AY_B$ is equal to the linear combination of monomials (in $X$-coordinates or in $Y$-coordinates)
such that the sum of all indices of these monomials is strictly smaller than that of $X_AX_B$.  
Since Proposition \ref{P1P2} tells us that a non PBW semi-standard $Y_AY_B$ can be rewritten in terms of the
PBW semi-standard quadratic monomials, the same is true for $X_AX_B$. 

Recall (see \cite{Feigin2}) that the PBW semi-standard monomials form a basis in the homogeneous coordinate ring of
the PBW degenerate flag variety, which is isomorphic to ${\rm Gr}_{\mathbf e}(K)$ for $K\in \mathcal{O}_{{\mathbf r}^1}$. 
Since the degeneration over the flat locus is flat, the dimension of the homogeneous components of the 
coordinate rings does not change in the family. We conclude that PBW semi-standard monomials form a basis in the homogeneous
coordinate ring of our quiver Grassmannian ${\rm Gr}_{\mathbf e}(M)$ and the relations from Definition \ref{ideal}
(after the base change as above) provide the reduced scheme structure.     
\end{proof}

\begin{remark}\label{Remarkr2}
Theorem \ref{ssPBW} holds for all partial flag varieties. The proof given above generalizes in a straightforward way
by forgetting the corresponding components. Moreover, the proof generalizes also to all orbits in the flat locus, that contain a point satisfying conditions in Lemma \ref{canon}. For example, in the ${\mathbf r}^2$-orbit, there is a point such that the semi-standard PBW tableaux provide a basis in the homogeneous coordinate ring.
\end{remark}

\section{Flat irreducible locus: group action and line bundles}\label{Sec:LineBundles}
\subsection{Lie algebras and representations}
Let $T\subset R$ be the transversal slice through the flat irreducible locus from \cite{CFFFR}, consisting
of all tuples of linear maps $(f_1,\ldots,f_{n-1})$ such that the matrix entry of $f_i$ in the standard basis $\{v_1,v_2,\cdots,v_{n+1}\}$ is given by:
$$(f_i)_{p,q}=\left\{\begin{array}{ccc}1&,& p=q\not=i+1,\\ \lambda_{p,q}&,& 2\leq p\leq i+1\leq q\leq n,\\
0&,& \mbox{ otherwise }\end{array}\right.$$
for certain $(\lambda_{i,j})_{2\leq i\leq j\leq n}$.
Let $M_t=((M_t)_1,\dots,(M_t)_n)$  be the representation of $Q$ corresponding to $t\in T$ and  
let $F_t$ denote the composition $f_{n-1}\circ f_{n-2}\circ\dots\circ f_1$.  
Then the matrix coefficient $(F_t)_{a,b}$ equals to $(b-a+1)\la_{a,b}$ if $2\leq a\le b\leq n$ and vanishes otherwise with the exception 
$(F_t)_{1,1}=(F_t)_{n+1,n+1}=1$.

Let $\fg_t$ be the Lie algebra of all $(n+1)\times (n+1)$ matrices with the bracket defined by  
the formula $[x,y]_t=xF_ty-yF_tx$. 
\begin{remark}
The subspace of upper triangular matrices $\fb_+$ is closed with respect to the bracket $[\cdot,\cdot]_t$.
However, this is {\it not} true for the subspace of strictly lower triangular matrices $\fn_-$.
\end{remark}

The deformed brackets naturally arise via endomorphism algebras
of $M_t$. Namely, let us define the family of maps $\Phi_t: \fg_t\to \mathrm {End} (M_t)$ by the formula
\[
(\Phi_t(x))_i=f_{i-1}\circ\dots \circ f_1\circ x\circ f_{n-1}\circ f_{n-2}\circ\dots \circ f_i. 
\]  
\begin{remark}
The condition that the $\Phi_t(x)$ indeed defines an endomorphism of the representation is easily verified, since 
this amounts to the conditions $f_i\circ (\Phi_t(x))_i= (\Phi_t(x))_{i+1}\circ f_i$ for $i < n$, which are immediate from the definition 
of the $\Phi_t$.
\end{remark}
Then we have the following lemma.
\begin{lemma}
The map $\Phi_t$ is a homomorphism of Lie algebras with respect to the bracket $[\cdot,\cdot]_t$ on $\fg_t$ and the usual
composition on $\mathrm{End} (M_t)$.    
\end{lemma}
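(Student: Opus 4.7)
The plan is to carry out a direct matrix-style computation, after first repackaging $\Phi_t$ in a way that makes the conjugation by $F_t$ visible. Set
\[
A_i := f_{i-1}\circ f_{i-2}\circ\cdots\circ f_1, \qquad B_i := f_{n-1}\circ f_{n-2}\circ\cdots\circ f_i,
\]
with the conventions $A_1 = \id$ and $B_n = \id$, so that by definition
\[
(\Phi_t(x))_i = A_i\circ x\circ B_i
\]
for every $x\in\fg_t$ and every $i\in\{1,\dots,n\}$.

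The key observation, which carries essentially all the content of the lemma, is the telescoping identity
\[
B_i\circ A_i \;=\; f_{n-1}\circ\cdots\circ f_i\circ f_{i-1}\circ\cdots\circ f_1 \;=\; F_t,
\]
valid for each $i$. I would state and verify this as a one-line sublemma before the main computation.

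Given this, the verification that $\Phi_t$ is a Lie algebra homomorphism is immediate. For $x,y\in\fg_t$ and each vertex $i$,
\[
[\Phi_t(x),\Phi_t(y)]_i \;=\; A_i\,x\,B_i\,A_i\,y\,B_i \;-\; A_i\,y\,B_i\,A_i\,x\,B_i \;=\; A_i\,(x F_t y - y F_t x)\,B_i \;=\; (\Phi_t([x,y]_t))_i,
\]
where the middle equality uses $B_i\circ A_i = F_t$ and the last equality is the definition of $[\cdot,\cdot]_t$ together with the formula for $\Phi_t$. Since the identity holds componentwise for every $i$, we obtain $\Phi_t([x,y]_t) = [\Phi_t(x),\Phi_t(y)]$ in $\End(M_t)$.

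There is no real obstacle here; the only thing worth flagging for the reader is that one should independently note (or have noted earlier) that $[\cdot,\cdot]_t$ really is a Lie bracket on $\fg_t$. Antisymmetry is visible from the formula, and the Jacobi identity follows from a direct cyclic cancellation of the six $F_t$-weighted triple products obtained by expanding $[[x,y]_t,z]_t + [[y,z]_t,x]_t + [[z,x]_t,y]_t$; this can be included as a short preliminary remark or left implicit since the homomorphism identity above would in any case transport Jacobi from $\End(M_t)$ back to $\fg_t$ on the image of $\Phi_t$.
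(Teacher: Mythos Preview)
Your proof is correct and is precisely the natural direct verification; the paper itself states the lemma without proof, so there is nothing to compare against beyond noting that your computation via $B_i\circ A_i = F_t$ is exactly the intended one-line check.
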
 

Thanks to the lemma above, the image of $\Phi_t$ is a Lie subalgebra in $\mathrm{End} (M_t)$. We denote this Lie subalgebra by 
$\fa_t$. 
\begin{lemma}\label{nokernel}
The map $\Phi_t$ has no kernel on $\fn_-$.
\end{lemma}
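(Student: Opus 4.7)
The plan is to turn the injectivity of $\Phi_t|_{\mathfrak{n}_-}$ into a one-line matrix identity by exploiting the rigid structure of the slice maps $f_k$. Writing $\Phi_t(x)_i = G_i\, x\, H_i$ with $G_i = f_{i-1}\circ\cdots\circ f_1$ and $H_i = f_{n-1}\circ\cdots\circ f_i$ (with $G_1 = H_n = \id$), the goal is to prove, for every position $(a,b)$ with $1\le b<a\le n+1$ and $b\le n$, the identity
\[
(G_b\, x\, H_b)_{a,b} \;=\; x_{a,b}.
\]
Once this is established, the hypothesis $\Phi_t(x)=0$ forces $x_{a,b}=0$ at every strictly lower triangular position, which is exactly what the lemma asserts.

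The identity reduces to a structural computation on the factors $f_k$. Recall that $(f_k)_{p,q}$ is upper triangular, equals $1$ on the diagonal except at $(k+1,k+1)$, and carries nonzero off-diagonal entries $\lambda_{p,q}$ only inside the window $2\le p\le k+1\le q\le n$. From this I extract two observations: \emph{(i)} for $a>b$ and any $k\le b-1$ one has $a\ge b+1\ge k+2$, so row $a$ of $f_k$ lies strictly outside the $\lambda$-window and equals $e_a^{\mathrm T}$; telescoping over the factors of $G_b$ then gives $e_a^{\mathrm T}\, G_b = e_a^{\mathrm T}$. \emph{(ii)} for any $b$ and any $k\ge b$ one has $b\le k<k+1$, so column $b$ of $f_k$ lies strictly outside the $\lambda$-window and equals $e_b$; telescoping over the factors of $H_b$ gives $H_b\, e_b = e_b$.

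Plugging (i) and (ii) into the expansion $(G_b\, x\, H_b)_{a,b}=\sum_{k,l}(G_b)_{a,k}\, x_{k,l}\,(H_b)_{l,b}$ collapses the double sum to the single term $x_{a,b}$, yielding the identity. The only real obstacle is bookkeeping: one must verify that the $\lambda$-window is avoided \emph{uniformly} across every factor appearing in $G_b$ and in $H_b$, which reduces to the crisp bounds $k+1\le b<a$ in (i) and $b\le k$ in (ii), both immediate from the index ranges defining $G_b$ and $H_b$. Notably, the argument uses nothing about genericity of the parameters $\lambda_{p,q}$, which is essential since $T$ is meant to be a transversal slice through the entire flat irreducible locus.
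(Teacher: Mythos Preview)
Your argument is correct and is essentially the same as the paper's: the paper observes that the lower-left $(n+1-i)\times i$ block of $\Phi_t(x)_i$ coincides with that of $x$, and you verify the special case $i=b$, column $b$, entry by entry via the row/column identities $e_a^{\mathrm T}G_b=e_a^{\mathrm T}$ and $H_b e_b=e_b$. Your version simply spells out explicitly why the relevant row and column of each factor $f_k$ avoid the $\lambda$-window, which is the content behind the paper's one-line claim.
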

\begin{proof}
The lower left $(n-i)\times i$-submatrix of $\Phi_t(x)_i$ coincides with the lower left $(n-i)\times i$-submatrix of 
$x$, which means that we can recover $x$ completely from $\Phi_t(x)$.
\end{proof}

\begin{remark}
The dimension of $\Phi_t(\fb_+)$ does depend on $t$. For example, if $\la_{i,j}=\delta_{i,j}$, then 
$\dim\Phi_t(\fb_+)=\dim(\fb_+)=(n+1)(n+2)/2$. If all $\la_{i,j}=0$, then $\dim\Phi_t(\fb_+)=2n+1$.
\end{remark}

Let us construct a family of representations $V_t(\mu)$ of $\fa_t$ labeled by dominant integral weights
$\mu=m_1\om_1+\dots +m_n\om_n$ with $m_i\in\bZ_{\ge 0}$.
We start with the fundamental representations. 
\begin{definition}
For $k=1,\dots,n$ we define $V(\om_k)\subset \Lambda^k (M_t)_k$ as the ${\rm U}(\fa_t)$-span of the vector 
$v_{\omega_k}=v_1\wedge\dots\wedge v_k$. 
\end{definition}

\begin{lemma}
$V(\om_k)=\Lambda^k (M_t)_k$.
\end{lemma}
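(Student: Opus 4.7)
The plan is to show that the cyclic span $V(\om_k)=U(\fa_t)\cdot v_{\om_k}$ contains every standard basis vector $v_I:=v_{i_1}\wedge\cdots\wedge v_{i_k}$ for $I=\{i_1<\cdots<i_k\}\subseteq\{1,\ldots,n+1\}$; since these span $\Lambda^k(M_t)_k$, this will complete the proof.

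First I analyse the operators $\Phi_t(E_{a,b})_k$ for $a>k\geq b$---the images under $\Phi_t$ of strictly lower-triangular matrix units that can raise a basis vector out of $\{v_1,\ldots,v_k\}$. Setting $G=f_{n-1}\circ\cdots\circ f_k$ and $H=f_{k-1}\circ\cdots\circ f_1$, the definition gives $\Phi_t(E_{a,b})_k=H\,E_{a,b}\,G$, and both $G$ and $H$ are upper triangular because each $f_i$ is. A direct computation yields
\[
\Phi_t(E_{a,b})_k\,v_q \;=\; G_{b,q}\sum_{p=1}^{n+1}H_{p,a}\,v_p.
\]
Inspecting the explicit entries of the $f_i$'s on the slice $T$, one checks that $G_{b,b}=1$ for every $b\le k$ and $H_{a,a}=1$ for every $a>k$.

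Next I expand $\Phi_t(E_{a,b})_k\cdot v_{\om_k}$ via the Leibniz rule. Using antisymmetry of the wedge, the surviving summands are either a scalar multiple of $v_{\om_k}$ itself or a ``defect-one'' basis vector $v_{I_{q,p}}$ with $I_{q,p}:=(\{1,\ldots,k\}\setminus\{q\})\cup\{p\}$, indexed by $b\le q\le k$ and $k<p\le a$; the coefficient of $v_{I_{q,p}}$ is $G_{b,q}H_{p,a}$, and in particular the leading term $v_{I_{b,a}}$ has coefficient $G_{b,b}H_{a,a}=1$. Viewing the assignment $(a,b)\mapsto\Phi_t(E_{a,b})_k\cdot v_{\om_k}\bmod \mc\, v_{\om_k}$ as a matrix against the basis $\{v_{I_{q,p}}\}$, the coefficient matrix is a tensor product of the $k\times k$ upper-left block of $G$ with the $(n{+}1{-}k)\times(n{+}1{-}k)$ lower-right block of $H^{\top}$, each unipotent triangular, hence invertible. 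Forming the corresponding linear combinations and subtracting the ensuing multiples of $v_{\om_k}\in V(\om_k)$, every $v_{I_{q,p}}$ lies in $V(\om_k)$.

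The proof now concludes by iteration. Applying the operators $\Phi_t(E_{a,b})_k$ to a defect-$d$ basis vector produces defect-$(d{+}1)$ basis vectors plus strictly lower-defect terms, and an analogous tensor-product triangularity controls the leading part. An induction on $d$ then places every $v_I$ inside $V(\om_k)$, whence $V(\om_k)=\Lambda^k(M_t)_k$. The main subtlety is the iterative bookkeeping: the set of defect-$(d{+}1)$ subsets reachable in one step from a fixed defect-$d$ subset is strictly smaller than the full defect-$(d{+}1)$ basis, but any $J$ of defect $d{+}1$ is one-step reachable from the subset $(J\setminus\{a\})\cup\{b\}$ (for any $a\in J$ with $a>k$ and any $b\in\{1,\ldots,k\}\setminus J$), so varying the starting vector covers all defect-$(d{+}1)$ basis vectors and no new ideas beyond the defect-one computation are required.
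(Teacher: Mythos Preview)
Your proof is correct and follows the same strategy the paper has in mind: act on $v_{\omega_k}$ by the operators $\Phi_t(E_{a,b})_k$ with $a>k\ge b$ and induct on the defect. The paper's one-line proof, however, appeals to the sharper fact established in the preceding lemma (injectivity of $\Phi_t$ on $\fn_-$): the lower-left $(n{+}1{-}k)\times k$ block of $\Phi_t(x)_k$ equals that of $x$. Concretely, on the slice $T$ one has $G_{s,q}=\delta_{s,q}$ for all $q\le k$ and $H_{p,r}=\delta_{p,r}$ for all $p>k$, not merely $G_{b,b}=H_{a,a}=1$. With this, your ``coefficient matrix'' is literally the identity, and the defect-$(d{+}1)$ part of $\Phi_t(E_{a,b})_k\,v_J$ reduces to the single term $\pm v_{(J\setminus\{b\})\cup\{a\}}$ (for $b\in J$, $a\notin J$); the tensor-product invertibility argument and the reachability bookkeeping in your last paragraph become unnecessary.
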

\begin{proof}
This is implied by the argument from the proof of Lemma \ref{nokernel}.
\end{proof}

\begin{definition}
For a dominant integral weight $\mu=\sum_{k=1}^n m_k\om_k$ we define the $\fa_t$-module 
$V_t(\mu)\subset \bigotimes V_t(\om_k)^{\otimes m_k}$ as the ${\rm U}(\fa_t)$-span of the vector
$v_\mu=\bigotimes v_{\om_k}^{\otimes m_k}$.  
\end{definition}

\begin{remark}\label{fn-}
Each space $V_t(\mu)$ is generated from the cyclic vector $v_\mu$ by the action of the (associative) algebra
of operators generated by $\Phi_t(\fn_-)$.  In fact, one easily sees that $\Phi_t(\fb_+) v_\mu\subset \bC v_\mu$.  
\end{remark}

In order to compute the dimension and to construct bases of the spaces $V_t(\mu)$ we define the
following total order on the standard basis $E_{a,b}$, $a>b$ of the algebra $\fn_-$ of strictly lower triangular
matrices: $E_{a,b}<E_{c,d}$ if $a-b>c-d$ or ($a-b=c-d$ and $a<c$). We extend this order to the homogeneous lexicographic order on the set of ordered monomials $E_{a_1,b_1}\dots E_{a_L,b_L}$, $E_{a_1,b_1}>\dots > E_{a_L,b_L}$. Namely, 
for two ordered monomials $E_{a_1,b_1}\dots E_{a_L,b_L}<E_{a'_1,b'_1}\dots E_{a'_M,b'_M}$ if $L<M$ or  
($L=M$ and there exists $j$ such that $E_{a_j,b_j}<E_{a'_j,b'_j}$ and $E_{a_i,b_i}=E_{a'_i,b'_i}$ for $i>j$).  
Given such an ordering we define monomial bases of $V_t(\mu)$ (see Remark \ref{fn-}) as follows.
We say that a vector $\prod_{i=1}^L E_{a_i,b_i}v_\mu\in V_t(\mu)$ is essential if 
\[
\prod_{i=1}^L E_{a_i,b_i}v_\mu\notin {\mathrm{span}} \left\{\prod_{i=1}^M E_{c_i,d_i}v_\mu\,\right| \left. \ 
\prod_{i=1}^M E_{c_i,d_i}<\prod_{i=1}^L E_{a_i,b_i}\right\}.
\] 
Clearly, the set of essential vectors form a basis of $V_t(\mu)$. 

For an element $\bs=(s_{i,j})_{1\le j<i\le n+1}$, $s_{i,j}\in \bZ_{\ge 0}$ we denote by $\bE^\bs$
the ordered product $\prod E_{i,j}^{s_{i,j}}$. Let $S_t(\mu)$ be the set of essential exponents, i.e. the set
of all $\bs$ such that ${\bE}^\bs v_\mu$ is an essential vector.

\begin{remark}
For $t=0$ (i.e. all $\la_{i,j}=0$)
the set of essential vectors is described via the combinatorics of Dyck paths (see \cite{FeFoLit}).
In particular, the number of essential vectors is equal to the dimension of the irreducible $\msl_{n+1}$-module 
$V(\mu)$ (which corresponds to $t$ with all $\la_{i,j}\ne 0$).    
\end{remark}

Our goal is to show that the set of essential monomials does not depend on $t$. In particular, we will show that 
$\dim V_t(\mu)$ is independent of $t$.

\begin{lemma}
For any $k=1,\dots,n$ and $t\in T$ the set of essential monomials in $V_t(\omega_k)$ is of the form
\[
E_{a_1,b_1}\dots E_{a_L,b_L},\ 1\le b_1<\dots<b_L\le k<a_L<a_{L-1}<\dots <a_1.
\]  
\end{lemma}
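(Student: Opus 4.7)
The plan is to compute each $\Phi_t(E_{a,b})_k$ explicitly as an endomorphism of $V=(M_t)_k$, and then to analyse the action of an arbitrary monomial $\mathbf{E}^{\mathbf{s}}$ on $v_{\omega_k}$ via the induced derivation on $\Lambda^k V$. Using the normal form of the $f_i$ on the transversal slice $T$, I would write $\Phi_t(E_{a,b})_k=h\circ E_{a,b}\circ g$ where $g:=f_{n-1}\circ\cdots\circ f_k$ and $h:=f_{k-1}\circ\cdots\circ f_1$; both are upper triangular, with matrix entries $g_{b,c}=\delta_{b,c}$ for $c\le k$, $g_{b,c}=\lambda_{b,c}$ for $k+1\le c\le n$, $h_{p,a}=\delta_{p,a}$ for $a>k$, and $h_{p,a}=\lambda_{p,a}$ for $2\le p\le a\le k$. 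Hence $\Phi_t(E_{a,b})_k$ is a rank-one operator, and its derivation action on $v_{\omega_k}=v_1\wedge\cdots\wedge v_k$ vanishes when $b>k$, is a scalar multiple of $v_{\omega_k}$ when $a\le k$ (equal to $\lambda_{b,a}v_{\omega_k}$ with the convention $\lambda_{1,a}:=0$), and equals, up to sign, $v_S$ with $S=(\{1,\ldots,k\}\setminus\{b\})\cup\{a\}$ when $b\le k<a$. In particular, iteration shows $V_t(\omega_k)=\Lambda^k V$, of dimension $\binom{n+1}{k}$.

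Next, I would prove by induction on the length $L$ that each monomial $\mathbf{E}^{\mathbf{s}}$ of the claimed form (supported on $L$ pairs $(a_i,b_i)$ with the $b_i\in\{1,\ldots,k\}$ pairwise distinct and the $a_i\in\{k+1,\ldots,n+1\}$ pairwise distinct) satisfies
\[
\mathbf{E}^{\mathbf{s}}\,v_{\omega_k}=\pm v_{S(\mathbf{s})}+(\text{a linear combination of strictly smaller monomials applied to }v_{\omega_k}),
\]
where $S(\mathbf{s})=\bigl(\{1,\ldots,k\}\setminus\{b_1,\ldots,b_L\}\bigr)\cup\{a_1,\ldots,a_L\}$. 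Applying the rightmost operator first and iterating, at each step $j$ the derivation rule produces a ``main'' contribution that substitutes $v_{b_j}\mapsto v_{a_j}$ at position $b_j$, together with ``cross-term'' contributions at positions already altered by previous operators, each carrying a scalar $g_{b_j,a_{j'}}=\lambda_{b_j,a_{j'}}$ and collapsing the two substitutions $v_{b_{j'}}\mapsto v_{a_{j'}}$ and $v_{b_j}\mapsto v_{a_j}$ into the single substitution $v_{b_{j'}}\mapsto v_{a_j}$. Every cross-term is therefore proportional to a monomial of strictly shorter length applied to $v_{\omega_k}$, and strictly shorter monomials are strictly smaller in the homogeneous lexicographic order.

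Finally, the candidate monomials parametrise all $\binom{n+1}{k}$ $k$-subsets $S\subseteq\{1,\ldots,n+1\}$, and since the $v_{S(\mathbf{s})}$'s form the standard basis of $\Lambda^k V=V_t(\omega_k)$ they are linearly independent; the leading-term analysis above shows that each candidate monomial is essential, and a count against $\dim V_t(\omega_k)=\binom{n+1}{k}$ then forces these to exhaust the essential set. The main technical obstacle is the inductive book-keeping in the second paragraph: one must verify uniformly that every correction produced by the derivation rule, at any step and applied to any intermediate wedge, strictly decreases the length of the underlying monomial, and that no cancellation among such corrections can reintroduce the leading monomial $\mathbf{E}^{\mathbf{s}}$ itself.
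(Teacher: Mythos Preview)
Your overall strategy is exactly the direct computation the paper alludes to, and the counting argument at the end is sound. However, your explicit matrix entries for $h=f_{k-1}\circ\cdots\circ f_1$ are incorrect, and this propagates into an incomplete cross-term analysis.

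Concretely, the claim ``$h_{p,a}=\delta_{p,a}$ for $a>k$'' is false. From the slice description one has, for $k<a\le n$,
\[
h(v_a)=v_a+\sum_{p=2}^{k}(k-p+1)\,\lambda_{p,a}\,v_p,
\]
so $h(v_a)$ has genuine components in rows $2,\dots,k$. (The formula $(F_t)_{p,a}=(a-p+1)\lambda_{p,a}$ in the paper already signals that such integer factors appear.) Similarly, your formula $h_{p,a}=\lambda_{p,a}$ for $2\le p\le a\le k$, and the formula $g_{b,c}=\lambda_{b,c}$ for $k+1\le c\le n$, are not exact; both $g$ and $h$ acquire integer multipliers and diagonal factors $\lambda_{c,c}$. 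Consequently your statement that $\Phi_t(E_{a,b})_k\cdot v_{\omega_k}$ ``equals, up to sign, $v_S$'' when $b\le k<a$ is wrong: there is an additional scalar multiple of $v_{\omega_k}$ coming from the row-$b$ component of $h(v_a)$.

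These errors do not break the argument, but they mean your cross-term bookkeeping is incomplete: you only account for cross-terms arising from $g$ at positions $c=a_{j'}>k$, whereas the off-diagonal part of $h$ produces further corrections at the position $c=b_j$ itself. The clean fix is to replace the explicit entries by the two qualitative facts that actually matter: (i) $g(v_c)=v_c$ for $c\le k$ and $c=n+1$, and (ii) for $a>k$, $h(v_a)-v_a\in\mathrm{span}(v_2,\dots,v_k)$. Then argue via the filtration by ``high count'' $\#(S\cap\{k+1,\dots,n+1\})$: each application of $\Phi_t(E_{a,b})_k$ raises this count by at most $1$, and by exactly $1$ only along the main substitution $c=b\le k$, $p=a>k$. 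Hence for any monomial $\mathbf t$ of length $L$, the only contribution to $\mathbf E^{\mathbf t}v_{\omega_k}$ with high count $L$ is the main path, which vanishes unless $\mathbf t$ is a candidate and then equals $\pm v_{S(\mathbf t)}$. This immediately gives the unitriangularity you want and shows every candidate is essential, after which your dimension count finishes the proof.
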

\begin{proof}
Direct computation.
\end{proof}

For a dominant integral $\mu$ let $S(\mu)$ be the Minkowski sum $m_1S_t(\om_1)+\dots +m_nS_t(\om_n)$. 
\begin{corollary}
Let $\mu=\sum_{k=1}^n m_k\om_k$. Then the vectors ${\bE}^\bs v_\mu$, $\bs\in S_t(\mu)$ are linearly independent in
$V_t(\mu)$.
\end{corollary}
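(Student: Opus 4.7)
My strategy is to identify, for each $\bs\in S(\mu)$, a distinct ``leading basis tensor'' in the ambient tensor product $\bigotimes_j V_t(\om_{k_j})$, and then conclude linear independence of the family $\{\bE^\bs v_\mu\}_{\bs\in S(\mu)}$ by a triangularity argument. To set this up, I fix a factorization $\mu=\om_{k_1}+\dots+\om_{k_\ell}$ with $\ell=\sum_k m_k$, so that $v_\mu=v_{\om_{k_1}}\otimes\dots\otimes v_{\om_{k_\ell}}$ and $V_t(\mu)\subset\bigotimes_j V_t(\om_{k_j})$.

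\textbf{Step 1 (Leibniz expansion and essential basis).} The coproduct $\Delta(x)=x\otimes 1+1\otimes x$ turns $\bigotimes_j V_t(\om_{k_j})$ into an $\fa_t$-module. Since $E_{a,b}\otimes 1$ and $1\otimes E_{a,b}$ commute, iterating the binomial identity yields
\[
\bE^\bs v_\mu=\sum_{\bs^{(1)}+\dots+\bs^{(\ell)}=\bs}\binom{\bs}{\bs^{(1)},\dots,\bs^{(\ell)}}\bigotimes_{j=1}^{\ell}\bE^{\bs^{(j)}}v_{\om_{k_j}},
\]
with positive multinomial coefficients $\prod_{a>b}\binom{s_{a,b}}{s_{a,b}^{(1)},\dots,s_{a,b}^{(\ell)}}$. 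The preceding lemma describes $S_t(\om_k)$ explicitly, and a Vandermonde count gives $|S_t(\om_k)|=\binom{n+1}{k}=\dim V_t(\om_k)$, so the linearly independent essential vectors $\{\bE^\bu v_{\om_k}:\bu\in S_t(\om_k)\}$ already form a basis of $V_t(\om_k)$. Consequently $\mathcal{B}:=\{\bigotimes_j\bE^{\bu^{(j)}}v_{\om_{k_j}}:\bu^{(j)}\in S_t(\om_{k_j})\}$ is a basis of the ambient tensor product.

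\textbf{Step 2 (Canonical decomposition and triangularity).} For each $\bs\in S(\mu)=\sum_j S_t(\om_{k_j})$, I would define a canonical greedy decomposition $(\bs^{(j)}_*)_j$ by choosing, at step $j$, the lex-maximal $\bs^{(j)}_*\in S_t(\om_{k_j})$ such that the remainder $\bs-\sum_{i\leq j}\bs^{(i)}_*$ still lies in $\sum_{i>j}S_t(\om_{k_i})$. The map $\bs\mapsto(\bs^{(j)}_*)_j$ is injective, so the canonical tensors $T_\bs:=\bigotimes_j\bE^{\bs^{(j)}_*}v_{\om_{k_j}}$ are distinct elements of $\mathcal{B}$ and hence linearly independent. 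Now any non-essential factor $\bE^{\bs^{(j)}}v_{\om_{k_j}}$ (with $\bs^{(j)}\notin S_t(\om_{k_j})$) rewrites in the essential basis of $V_t(\om_{k_j})$ as a combination indexed only by strictly smaller exponents, by the very definition of ``essential''. Substituting these rewritings into the Leibniz expansion of $\bE^\bs v_\mu$, the tensor $T_\bs$ appears with a positive integer coefficient (coming from the ``diagonal'' decomposition $(\bs^{(j)}_*)_j$), while in an induced order on $S(\mu)$ it does not appear in the expansion of $\bE^{\bs'}v_\mu$ for any $\bs'\succ\bs$. The resulting triangular relation then forces $\{\bE^\bs v_\mu:\bs\in S(\mu)\}$ to be linearly independent.

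\textbf{Main obstacle.} The crux is verifying the triangularity in Step 2: one must choose both the greedy decomposition and a matching order on $S(\mu)$ so that the cascade of non-essential reductions for $\bs'\neq\bs$ never contaminates the coefficient of $T_\bs$. This combinatorial compatibility should follow from the explicit non-crossing pair description of $S_t(\om_k)$ given by the preceding lemma, and can plausibly be checked by induction on $\ell$ via the factorization $V_t(\mu)\subset V_t(\mu-\om_{k_\ell})\otimes V_t(\om_{k_\ell})$, reducing to the two-factor case where the Leibniz rule has only one split. I expect this combinatorial step to absorb most of the work; once it is in place, linear independence is immediate from the fact that $T_\bs\in\mathcal{B}$ and $\bs\mapsto T_\bs$ is injective.
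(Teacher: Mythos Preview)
Your plan is essentially the same as the paper's: both use the Leibniz/coproduct expansion in the tensor product and reduce to an inductive argument on $\ell=\sum_k m_k$. The paper's proof is in fact even terser than yours --- it simply writes $V_t(\mu+\om_k)=V_t(\mu)\odot V_t(\om_k)$ and asserts that ``one shows that the products of essential monomials for $U$ and $W$ are linearly independent in $U\odot W$,'' leaving the two-factor triangularity to the reader, exactly the step you flag as the Main obstacle.

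One comment on the execution: your Step~2, with a lex-maximal greedy decomposition across all $\ell$ factors simultaneously, is more elaborate than necessary and makes the triangularity harder to verify than it needs to be. The cleaner route --- which you yourself propose at the end --- is to run the induction so that at each stage there are only \emph{two} tensor factors, $V_t(\mu)\otimes V_t(\om_k)$. Then the Leibniz expansion has a single split $\bs=\bs^{(1)}+\bs^{(2)}$, you can take as ``leading term'' the one with $\bs^{(2)}\in S_t(\om_k)$ maximal in the paper's monomial order, and the triangularity check becomes a statement about a single fundamental module, where the explicit description of $S_t(\om_k)$ from the preceding lemma is directly usable. This is what the paper's sketch intends, and it avoids having to control the interaction of $\ell$ simultaneous non-essential reductions.
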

\begin{proof}
We prove this by induction on $m_1+\dots+m_n$. If the sum is equal to one, then we are done. Now by definition 
$V_t(\mu+\om_k)=V_t(\mu)\odot V_t(\omega_k)$, where for two cyclic $\fa_t$-modules $U$ and $W$ with cyclic vectors
$u\in U$ and $w\in W$ the module  $U\odot W\subset U\T W$ is the Cartan component ${\mathrm U}(\fa_t) (u\T w)$.
Now one shows that the products of essential monomials for $U$ and $W$ are linearly independent in  $U\odot W$.     
\end{proof}

\begin{corollary}\label{est}
$\dim V_t(\mu)\ge \dim V(\mu)$.
\end{corollary}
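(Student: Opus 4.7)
The plan is to combine the dimension bound furnished by the previous corollary with a specialization at $t=0$, where the Feigin--Fourier--Littelmann basis theorem applies.

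First, I would invoke the lemma immediately preceding the corollary, which describes the essential monomials in each fundamental module $V_t(\om_k)$ purely combinatorially (as sequences $E_{a_1,b_1}\cdots E_{a_L,b_L}$ with $1\le b_1<\dots<b_L\le k<a_L<\dots<a_1$). This description is manifestly independent of the parameter $t\in T$, so the set $S_t(\om_k)$ does not depend on $t$, and therefore neither does the Minkowski sum $S(\mu)=\sum_{k=1}^n m_k S_t(\om_k)$.

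Next, the previous corollary states that the vectors $\bE^{\bs}v_\mu$ for $\bs\in S(\mu)$ are linearly independent in $V_t(\mu)$, which gives the key inequality $\dim V_t(\mu)\ge |S(\mu)|$. The proof is then reduced to verifying the identity $|S(\mu)|=\dim V(\mu)$.

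For the latter I would specialize to $t=0$, i.e.\ all $\la_{i,j}=0$. In this case the composite $F_0$ vanishes, the bracket $[\cdot,\cdot]_0$ is abelian, and one checks directly from the definition of $\Phi_0$ that $V_0(\mu)$ is precisely the PBW-degenerate module studied in \cite{Feigin1, FeFoLit}. By the Feigin--Fourier--Littelmann theorem cited in the remark preceding the corollary, the essential monomials form a basis of $V_0(\mu)$, are indexed exactly by the Minkowski sum $S(\mu)$ (via the Dyck path description), and $\dim V_0(\mu)=\dim V(\mu)$. Hence $|S(\mu)|=\dim V(\mu)$, and combining with the bound above yields the desired inequality.

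The main potential obstacle is the identification of $V_0(\mu)$ with the classical PBW-degenerate module of \cite{FeFoLit}, so that the FFL basis theorem transfers verbatim; this ought to be transparent from the construction but requires a careful inspection of $\Phi_0$, $\fa_0$ and the fact that $\Phi_0(\fb_+)v_\mu\subset \bC v_\mu$ forces $V_0(\mu)$ to coincide with the cyclic module generated by the abelianised negative part, which is exactly the PBW-degenerate representation.
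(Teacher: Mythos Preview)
Your strategy is correct and matches the paper's intended (unstated) argument: combine the linear independence from the preceding corollary with the Feigin--Fourier--Littelmann result at $t=0$, already flagged in the remark, to identify $|S(\mu)|$ with $\dim V(\mu)$.

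One factual correction is needed. At $t=0$ the composite $F_0$ does \emph{not} vanish: by the formula given in the paper, $(F_0)_{1,1}=(F_0)_{n+1,n+1}=1$, and consequently the bracket $[\cdot,\cdot]_0$ on all of $\fg_0$ is not abelian (for instance $[E_{2,1},E_{1,n+1}]_0=E_{2,n+1}$). What \emph{is} true, and what you actually need, is that $[\cdot,\cdot]_0$ vanishes on $\fn_-$: for strictly lower-triangular $x,y$ one checks $xF_0y=0$, since the only possibly nonzero row of $F_0y$ is row $n+1$ while $x$ has zero $(n{+}1)$-st column. Equivalently, a direct computation gives $\Phi_0(E_{a,b})_i=E_{a,b}$ when $b\le i<a$ and zero otherwise, so $\Phi_0(\fn_-)$ is abelian in $\End(M_0)$ and $V_0(\mu)$ is precisely the PBW-degenerate module of \cite{FeFoLit}. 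With this adjustment your argument goes through unchanged.
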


\subsection{Lie groups and quiver Grassmannians}
Let ${\rm Gr}_\mathbf{e}(M_t)$ be the quiver Grassmannian corresponding to the representation $M_t$.
To simplify the notation, we assume below that $\mathbf{e}=(1,2,\cdots,n)$. However, all the results of this section hold
in full generality. 

Let $\cO_j$ be the following  line bundles on ${\rm Gr}_j(V)$ generating the Picard group:
$\cO_j=\imath^*\cO(1)$, where $\imath: {\rm Gr}_j(V)\mapsto \bP(\Lambda^jV)$ is the Pl\"ucker embedding.
Then for each $\mu=m_1\om_1+\dots+m_n\om_n$ we obtain the line bundle 
$$\cO(\mu)=\bigotimes_{j=1}^n \cO_j^{\otimes m_j}.$$ 

In a similar way we obtain the line bundle $\cO_t(\mu)$ on each quiver Grassmannian ${\rm Gr}_\be(M_t)$.

\begin{proposition}\label{flat}
For any $t\in T$ we have 
\[
\dim \mathrm{H}^k({\rm Gr}_\be(M_t),\cO_t(\mu))=\delta_{k,0}\dim V(\mu).
\] 
\end{proposition}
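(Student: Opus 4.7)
The plan is to combine three ingredients: constancy of the Euler characteristic via flatness of the family over $T$, a two-sided bound on $h^0(t)$ (lower bound from the module $V_t(\mu)$, upper bound from Theorem~\ref{ssPBW}), and propagation of higher cohomology vanishing from the distinguished PBW point $M^1 \in T$ via Grauert's theorem.

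First, since $T\subset U_{flat,irr}\subset U_{flat}$, the restriction $\pi_T\colon Y_\mathbf{e}|_T\to T$ is proper and flat. The relative Pl\"ucker embedding packages the $\mathcal{O}_t(\mu)$ into a relative line bundle $\mathcal{O}_T(\mu)$ on $Y_\mathbf{e}|_T$, so by the semi-continuity theorem the Euler characteristic $\chi(\mathrm{Gr}_\mathbf{e}(M_t),\mathcal{O}_t(\mu))$ is locally constant on $T$; as $T$ is an irreducible affine space, it is constant. At a generic $t_0\in T$ (all $\lambda_{ij}$ nonzero) we have $M_{t_0}\cong M^0$ and $\mathrm{Gr}_\mathbf{e}(M_{t_0})\cong \mathrm{SL}_{n+1}/B$, so the classical Borel--Weil theorem gives $\chi = \dim V(\mu)$.

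Next, I would establish $h^0(t)=\dim V(\mu)$ for every $t\in T$ by squeezing it between two bounds. For the lower bound, the construction of $V_t(\mu)$ as the cyclic $\mathfrak{a}_t$-submodule of $\bigotimes V_t(\omega_k)^{\otimes m_k}=\bigotimes(\Lambda^{e_k}(M_t)_k)^{\otimes m_k}$ generated by $v_\mu$, together with the Pl\"ucker--Segre embedding $\mathrm{Gr}_\mathbf{e}(M_t)\hookrightarrow\prod_k\mathbb{P}(\Lambda^{e_k}(M_t)_k)\hookrightarrow\mathbb{P}(\bigotimes(\Lambda^{e_k}(M_t)_k)^{\otimes m_k})$, yields an injection $V_t(\mu)^*\hookrightarrow H^0(\mathrm{Gr}_\mathbf{e}(M_t),\mathcal{O}_t(\mu))$; combined with Corollary~\ref{est} this gives $h^0(t)\geq\dim V(\mu)$. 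For the matching upper bound, Theorem~\ref{ssPBW} together with Remark~\ref{Remarkr2} produces in each $G_\mathbf{d}$-orbit meeting $U_{flat,irr}$ a representative whose Pl\"ucker homogeneous coordinate ring admits the PBW semi-standard tableaux basis, whose degree $\mu$ piece has dimension $\dim V(\mu)$. Since the homogeneous coordinate ring surjects onto the graded ring of sections and $h^0$ is $G_\mathbf{d}$-equivariant, this forces $h^0(t)\leq\dim V(\mu)$.

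For the vanishing of higher cohomology, I would use the distinguished point $0\in T$ (all $\lambda_{ij}=0$), where $M_0=M^1$ and $\mathrm{Gr}_\mathbf{e}(M_0)$ is the PBW degenerate complete flag variety. The Borel--Weil theorem for this variety, due to Feigin \cite{Feigin1} (see also \cite{FFFM}), gives $h^k(\mathrm{Gr}_\mathbf{e}(M^1),\mathcal{O}(\mu))=0$ for $k\geq 1$. Since $h^0$ is constant on $T$ equal to $\chi$, Grauert's theorem makes $\pi_{T*}\mathcal{O}_T(\mu)$ locally free of rank $\dim V(\mu)$, and the base-change exact sequences $0\to R^k\pi_{T*}\mathcal{O}_T(\mu)\otimes k(t)\to H^k(\mathrm{Gr}_\mathbf{e}(M_t),\mathcal{O}_t(\mu))\to \mathrm{Tor}_1(R^{k+1}\pi_{T*}\mathcal{O}_T(\mu),k(t))\to 0$ can be run inductively in $k$ starting from the vanishing at $t=0$: iteratively one deduces that each $R^k\pi_{T*}\mathcal{O}_T(\mu)$ is locally free of rank zero on $T$, hence identically zero. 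This gives $h^k(t)=0$ for all $k\geq 1$ and all $t\in T$, completing the proof.

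The main obstacle is the last paragraph: upper semi-continuity alone only guarantees cohomology vanishing in a Zariski neighborhood of $M^1$ in $T$, so the argument genuinely relies on the global constancy of $h^0$ (established via the independent two-sided bound) to drive Grauert's base-change machinery and eliminate torsion in the higher direct images. Both the Pl\"ucker/Segre identification giving the injection $V_t(\mu)^*\hookrightarrow H^0$, and the propagation step via iterated base change, must be executed with care to keep the argument global on $T$.
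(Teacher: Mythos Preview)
Your argument has a genuine gap in the upper bound step. You assert that ``the homogeneous coordinate ring surjects onto the graded ring of sections,'' but for a closed embedding $X\hookrightarrow\prod_k\mathbb{P}(\Lambda^k V)$ the natural map goes the other way: the multi-graded coordinate ring $R(X)$ \emph{injects} into $\bigoplus_\mu H^0(X,\mathcal{O}_X(\mu))$, and surjectivity in degree $\mu$ is equivalent to the vanishing of $H^1(\prod_k\mathbb{P},\mathcal{I}_X(\mu))$, which you have not established. Hence Theorem~\ref{ssPBW} only yields $\dim V(\mu)=\dim R(X)_\mu\le h^0(t)$, i.e.\ the same lower bound you already had, not the upper bound you need. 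Without $h^0(t)\le\dim V(\mu)$ the constancy of $h^0$ on $T$ is unproven, and the Grauert/base-change step in your last paragraph cannot be launched (your short exact sequence is also not a standard form of the base-change theorem and would need justification in any case).

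The paper's proof avoids all of this by exploiting a point you overlook: one should use semicontinuity not over the slice $T$ (where $0$ and $t$ are both closed points with no specialization relation) but over the flat locus in $R_{\mathbf d}$. Since $T\subset U_{flat,irr}$, every $M_t$ degenerates to $M_0=M^1$, i.e.\ $M^1$ lies in the closure of the $G_{\mathbf d}$-orbit of $M_t$. Upper semicontinuity, together with the fact that $h^i$ is constant along $G_{\mathbf d}$-orbits, then gives $h^i(M_t)\le h^i(M^1)$ for every $i$. The result of \cite{FF} at $M^1$ yields $h^i(M_t)=0$ for $i\ge 1$ and $h^0(M_t)\le\dim V(\mu)$; constancy of the Euler characteristic (or the same inequality for $h^0$) finishes the argument. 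This is a two-line proof once the degeneration direction is identified, and it does not require the forward references to Corollary~\ref{est} or Lemma~\ref{emb} that your route invokes.
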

\begin{proof}
This follows from the semicontinuity of the dimensions of the cohomology groups in a flat family and 
the known result for $t=0$ in \cite{FF} (the PBW-degenerate flag varieties). 
\end{proof}

For convenience, we extend the parameters $\la_{i,j}$, $2\le i\le j\le n$ to $\la_{i,j}$ with arbitrary $i,j\in\{1,\dots,n+1\}$
by $\la_{1,1}=\la_{n+1,n+1}=1$ and $\la_{i,j}=0$ for other (not yet covered) pairs $i,j$.     
\begin{lemma}
If $\la_{b,a}=0$, then the endomorphisms ${\rm Id}+x\Phi_t(E_{a,b})$, $x\in\bC$ form a group $G_{a,b}$ isomorphic
to the additive group $\bG_a=\bC_+$. If  $\la_{b,a}\ne 0$, then the operators 
${\rm Id}+x\Phi_t(E_{a,b})$, $x\in\bC\setminus \{(a-b-1)\la_{b,a}^{-1}\}$  form a group $G_{a,b}$ isomorphic
to the multiplicative group $\bG_m=\bC^*$.
\end{lemma}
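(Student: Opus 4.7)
The plan is to reduce the lemma to the single identity
\[
\Phi_t(E_{a,b})\circ\Phi_t(E_{a,b}) \;=\; (F_t)_{b,a}\,\Phi_t(E_{a,b})
\]
in $\End(M_t)$, after which everything follows by elementary one-variable algebra applied to the family $\{{\rm Id}+x\,\Phi_t(E_{a,b})\}_{x\in\bC}$.

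First I would verify this identity by a direct matrix computation. Since
\[
(\Phi_t(E_{a,b}))_i \;=\; f_{i-1}\circ\dots\circ f_1\circ E_{a,b}\circ f_{n-1}\circ\dots\circ f_i,
\]
squaring the operator produces the composition $f_{n-1}\circ\dots\circ f_i\circ f_{i-1}\circ\dots\circ f_1 = F_t$ sandwiched between two copies of $E_{a,b}$, giving
\[
(\Phi_t(E_{a,b}))_i^2 \;=\; (f_{i-1}\cdots f_1)\circ(E_{a,b}\,F_t\,E_{a,b})\circ(f_{n-1}\cdots f_i).
\]
A one-line matrix multiplication yields $E_{a,b}\,F_t\,E_{a,b} = (F_t)_{b,a}\,E_{a,b}$, whence the claimed identity. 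Invoking the explicit formula for the entries of $F_t$ recalled at the start of the section, one sees that $(F_t)_{b,a}$ is a nonzero scalar multiple of $\lambda_{b,a}$ when $2\le b<a\le n$, and vanishes otherwise in accordance with the extended convention $\lambda_{1,1}=\lambda_{n+1,n+1}=1$ and all other boundary $\lambda$'s zero. In particular, $(F_t)_{b,a}=0$ if and only if $\lambda_{b,a}=0$.

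Second, writing $c:=(F_t)_{b,a}$ and using the quasi-idempotent identity to expand,
\[
({\rm Id}+x\,\Phi_t(E_{a,b}))\circ({\rm Id}+y\,\Phi_t(E_{a,b})) \;=\; {\rm Id}+(x+y+cxy)\,\Phi_t(E_{a,b}),
\]
so the family is closed under composition, with group operation $x*y=x+y+cxy$. If $\lambda_{b,a}=0$, then $c=0$, $*$ is plain addition on $\bC$, and $x\mapsto{\rm Id}+x\,\Phi_t(E_{a,b})$ is an isomorphism $\bG_a\xrightarrow{\sim}G_{a,b}$. If $\lambda_{b,a}\ne 0$, then $c\ne 0$ and the substitution $u=1+cx$ intertwines $*$ with ordinary multiplication on $\bC$; the element ${\rm Id}+x\,\Phi_t(E_{a,b})$ is invertible inside the family exactly when $u\ne 0$, i.e.\ when $x$ avoids the single value making $1+cx=0$, yielding $G_{a,b}\cong\bG_m$.

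The only real obstacle is bookkeeping: pinning down the explicit constant $c$ and matching it to the excluded parameter value stated in the lemma. This amounts to a careful reading of the index and sign conventions used for the matrix entries of $F_t$ and for the extended $\lambda$'s, and requires no further representation-theoretic input.
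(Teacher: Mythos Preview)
Your approach is correct and essentially identical to the paper's: the paper's entire proof consists of writing down the single composition identity
\[
({\rm Id}+x\Phi_t(E_{a,b}))({\rm Id}+y\Phi_t(E_{a,b}))={\rm Id}+(x+y+xy(b-a+1)\la_{b,a})\Phi_t(E_{a,b})
\]
and declaring that this implies the lemma. Your argument is the same but more explicit, supplying the quasi-idempotent computation $E_{a,b}F_tE_{a,b}=(F_t)_{b,a}E_{a,b}$ that underlies this identity and spelling out the change of variable $u=1+cx$ that exhibits the $\bG_m$-structure; your closing caveat about bookkeeping the exact constant is apt, since the paper itself leaves the matching of $(F_t)_{b,a}$ with the stated excluded value implicit.
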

\begin{proof}
We note that 
\[
({\rm Id}+x\Phi_t(E_{a,b}))({\rm Id}+y\Phi_t(E_{a,b}))={\rm Id}+(x+y+xy(b-a+1)\la_{b,a})\Phi_t(E_{a,b}).
\]
This implies the lemma.
\end{proof}

We denote by $G_t$ the group generated by all $G_{a,b}$ and by $G_t^-$ the subgroup generated by $G_{a,b}$ with $a>b$. 
\begin{remark}
The Lie algebra of $G_t$ is isomorphic to $\fa_t$.
\end{remark}

\begin{lemma}
The group $G_t$ acts on the quiver Grassmannian ${\rm Gr}_\mathbf{e}(M_t)$ with an open dense $G_t^-$-orbit through
the point $(\mathrm{span}(v_1,\dots,v_k))_{k=1,\dots,n}$.
\end{lemma}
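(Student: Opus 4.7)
The plan is first to verify that $G_t$ preserves subrepresentations of $M_t$, so the action is well defined, and then to establish openness of the $G_t^-$-orbit through $p_0$ via a tangent-space dimension count, with density following from the irreducibility of $\mathrm{Gr}_\mathbf{e}(M_t)$. The latter is guaranteed because $M_t\in U_{flat,irr}$: the slice $T$ from \cite{CFFFR} was built precisely as a transversal to the flat irreducible locus. For readability I would write the proof in the case $\mathbf{e}=(1,\dots,n)$ stated in this section; the general case is an immediate simplification via the parabolic projection of Lemma \ref{canon}(b).

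For well-definedness of the action, each generator $\mathrm{Id}+x\,\Phi_t(E_{a,b})$ is invertible by the preceding lemma and lies in $\mathrm{Id}+\Phi_t(\mathfrak{g}_t)\subset \mathrm{Id}+\mathrm{End}_Q(M_t)$, so it is an automorphism of $M_t$ as a $Q$-representation. Such automorphisms permute the subrepresentations of dimension vector $\mathbf{e}$, and hence the group $G_t$ generated by these operators acts on $\mathrm{Gr}_\mathbf{e}(M_t)$; the subgroup $G_t^-$ then acts as well.

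To show openness of $G_t^-\cdot p_0$, I would differentiate the orbit map $g\mapsto g\cdot p_0$ at the identity. This differential factors as
\[
\fn_-\xrightarrow{\Phi_t}\mathrm{End}_Q(M_t)\longrightarrow T_{p_0}\mathrm{Gr}_\mathbf{e}(M_t),
\]
where the second arrow sends $(\psi_k)_k$ to $(\psi_k|_{U_k^0}\bmod U_k^0)_k$. If this composition is injective then $\dim G_t^-\cdot p_0\geq\dim\fn_-=n(n+1)/2=\dim\mathrm{Gr}_\mathbf{e}(M_t)$, and since a $G_t^-$-orbit is locally closed, a full-dimensional orbit inside the irreducible variety $\mathrm{Gr}_\mathbf{e}(M_t)$ must be open, hence dense.

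The main obstacle, and essentially the whole content of the proof, is the injectivity of this composition. Suppose $\xi\in\fn_-$ satisfies $\Phi_t(\xi)_k(U_k^0)\subset U_k^0$ for every $k$, i.e.\ the block of $\Phi_t(\xi)_k$ with row index $>k$ and column index $\leq k$ vanishes for each $k$. Adapting the computation from the proof of Lemma \ref{nokernel}, the flanking compositions $f_{k-1}\circ\dots\circ f_1$ and $f_{n-1}\circ\dots\circ f_k$ are upper triangular with $1$'s at the diagonal positions $(a,a)$ for $a>k$ and $(b,b)$ for $b\leq k$ respectively (thanks to the explicit shape of $T$), so pre- and post-composition with them do not contribute anything into the lower-left block from entries of $\xi$ outside that block. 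Consequently the lower-left block of $\Phi_t(\xi)_k$ coincides literally with the corresponding block of $\xi$. Letting $k$ range over $\{1,\dots,n\}$, every entry $\xi_{a,b}$ with $a>b$ appears in at least one such block (take for instance $k=b$), and the vanishing hypothesis forces $\xi=0$. This gives the injectivity, and the dimension count above closes the proof.
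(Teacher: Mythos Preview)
Your argument is correct and follows the same route as the paper's proof, which is a terse two-line dimension count: the orbit has dimension $n(n+1)/2$, and irreducibility of $\mathrm{Gr}_{\mathbf e}(M_t)$ then forces it to be open and dense. Your tangent-space computation, showing that the lower-left $(n+1-k)\times k$ block of $\Phi_t(\xi)_k$ agrees with that of $\xi$, is precisely the content of Lemma~\ref{nokernel} and is the natural way to unpack the paper's phrase ``one sees that the $G_t^-$-orbit above has dimension $n(n+1)/2$''.
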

\begin{proof}
One sees that the $G_t^-$-orbit above has dimension $n(n+1)/2$. Since the quiver Grassmannian  ${\rm Gr}_\bd(M_t)$ is
irreducible, our lemma holds.
\end{proof}

\begin{proposition}
For a regular $\mu$ (i.e. $m_k>0$ for all $k$) 
there exists a natural projective embedding $\imath_\mu:{\rm Gr}_\mathbf{e}(M_t)\subset \bP(V_t(\mu))$.
We have $\imath_\mu^*\cO(1)\simeq \cO_t(\mu)$.
\end{proposition}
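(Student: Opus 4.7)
The plan is to obtain $\imath_\mu$ by factoring a well-known projective embedding through the linear subspace $V_t(\mu)$, using the $G_t$-equivariance of the construction plus the density of a $G_t^-$-orbit.

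First, I would recall that the Plücker embedding gives a closed immersion
\[
\iota:{\rm Gr}_\mathbf{e}(M_t)\hookrightarrow \prod_{k=1}^n \bP(\Lambda^k (M_t)_k),
\]
and, because $\mu$ is regular (all $m_k>0$), composition with the Segre--Veronese morphism yields a closed immersion
\[
j:\prod_{k=1}^n \bP(\Lambda^k(M_t)_k)\hookrightarrow \bP\!\left(\bigotimes_{k=1}^n (\Lambda^k (M_t)_k)^{\T m_k}\right).
\]
By the standard computation of pullbacks along Plücker and Segre--Veronese, $(j\circ\iota)^*\cO(1)\simeq \cO_t(\mu)$. It therefore suffices to show that the image of $j\circ\iota$ lies in the linear subspace $\bP(V_t(\mu))$: then $\imath_\mu$ is defined as the factored map, automatically a closed embedding, and $\imath_\mu^*\cO(1)\simeq \cO_t(\mu)$.

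Second, I would verify that $V_t(\mu)\subset \bigotimes_k V_t(\om_k)^{\T m_k}=\bigotimes_k (\Lambda^k(M_t)_k)^{\T m_k}$ is $G_t$-stable. The Lie algebra $\fa_t$ acts diagonally on the tensor product via $\Phi_t$ and its natural extension to exterior powers; by definition $V_t(\mu)=\mathrm{U}(\fa_t)\cdot v_\mu$ is $\fa_t$-stable. Since each one-parameter subgroup $G_{a,b}$ exponentiates $\Phi_t(E_{a,b})$ and $G_t$ is generated by the $G_{a,b}$, the subspace $V_t(\mu)$ is preserved by $G_t$. Moreover, the $G_t$-action on the ambient tensor space is compatible, via $j\circ\iota$, with the natural $G_t$-action on ${\rm Gr}_\mathbf{e}(M_t)$: this is just the statement that Plücker and Segre--Veronese are equivariant for the linear action of $G_t\subset \mathrm{End}(M_t)^\times$ on each $(M_t)_k$, which is immediate from the construction of $\Phi_t$.

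Third, I would use the base flag. The tuple $F_0=(\mathrm{span}(v_1,\dots,v_k))_{k=1}^n\in{\rm Gr}_\mathbf{e}(M_t)$ maps under $j\circ\iota$ exactly to the line $[v_\mu]\in\bP(V_t(\mu))$. By the preceding lemma, the $G_t^-$-orbit of $F_0$ is open and dense in ${\rm Gr}_\mathbf{e}(M_t)$; by equivariance its image lies in $G_t^-\cdot[v_\mu]\subset \bP(V_t(\mu))$. Since ${\rm Gr}_\mathbf{e}(M_t)$ is irreducible and $\bP(V_t(\mu))\subset \bP(\bigotimes_k (\Lambda^k(M_t)_k)^{\T m_k})$ is a closed subvariety, the whole image of $j\circ\iota$ is contained in $\bP(V_t(\mu))$. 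This yields $\imath_\mu$ and the identification of line bundles.

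The main obstacle is the equivariance/stability step: one must check carefully that the $G_t$-action on the ambient tensor space, induced from $\Phi_t$ together with the exterior and tensor power constructions, is the same as the geometric $G_t$-action on the Plücker coordinates of ${\rm Gr}_\mathbf{e}(M_t)$, and hence preserves the subspace $V_t(\mu)$ cut out by the $\mathrm{U}(\fa_t)$-span of $v_\mu$. All other steps are formal consequences of standard facts about Plücker and Segre--Veronese, combined with the irreducibility of ${\rm Gr}_\mathbf{e}(M_t)$ (flat irreducible locus) and the existence of the open dense $G_t^-$-orbit established above.
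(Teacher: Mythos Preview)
Your proposal is correct and follows essentially the same approach as the paper: embed ${\rm Gr}_\mathbf{e}(M_t)$ via Pl\"ucker and Segre--Veronese into $\bP\bigl(\bigotimes_k (\Lambda^k(M_t)_k)^{\otimes m_k}\bigr)$, then use that the quiver Grassmannian is the closure of the $G_t^-$-orbit through the base flag, whose image is $[v_\mu]$, so the image lies in the Cartan component $\bP(V_t(\mu))$. The paper's proof is more terse and leaves implicit the equivariance/closure argument you spell out, but the idea is the same.
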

\begin{proof}
We have the embedding ${\rm Gr}_\mathbf{e}(M_t)\subset \prod_{k=1}^n \mathrm{Gr}_k((M_t)_k)$, where the left hand side is the closure 
of the $G_t^-$ orbit through the point $\prod_{k=1}^n \mathrm{span}(v_1,\dots,v_k)$. We also have natural $G_t$-equivariant embeddings
$\mathrm{Gr}_k((M_t)_k)\subset \bP(V_t(\omega_k))$. Since $V_\mu(t)$ is the Cartan component inside the tensor product of fundamental 
representations, we obtain the embedding $\imath_\mu:{\rm Gr}_\mathbf{e}(M_t)\subset \bP(V_t(\mu))$.
\end{proof}

\begin{lemma}\label{emb}
There exists an embedding $V_t(\mu)^*\hk \mathrm{H}^0({\rm Gr}_\mathbf{e}(M_t),\cO_t(\mu))$.
\end{lemma}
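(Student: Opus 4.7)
The plan is to realize the claimed embedding as the dual of the restriction-of-sections map induced by the projective embedding $\imath_\mu$ from the previous proposition. Since $\mathrm{H}^0(\bP(V_t(\mu)),\cO(1)) = V_t(\mu)^*$ canonically and $\imath_\mu^*\cO(1)\simeq \cO_t(\mu)$, pullback of sections defines a linear map
\[
V_t(\mu)^* \longrightarrow \mathrm{H}^0({\rm Gr}_\mathbf{e}(M_t),\cO_t(\mu)),\qquad \ell\mapsto \ell\circ \imath_\mu.
\]
I would then reduce injectivity to the statement that $\imath_\mu({\rm Gr}_\mathbf{e}(M_t))$ is not contained in any proper linear subspace of $\bP(V_t(\mu))$, since the kernel of the restriction map is exactly the annihilator of the linear span of the image.

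Next I would exhibit enough points in the image to span $V_t(\mu)$. First, the standard flag $(U_k)_{k=1}^n$ with $U_k=\sspan(v_1,\dots,v_k)$ lies in ${\rm Gr}_\mathbf{e}(M_t)$, because the explicit description of $(f_i)_{p,q}$ on $T$ gives $f_i(v_q)=v_q$ for $q\leq i$, so $f_i(U_i)\subset U_{i+1}$. Its Pl\"ucker/Segre image is $[v_1\wedge\dots\wedge v_k]_k$, which in $\bP(\bigotimes_k V_t(\om_k)^{\otimes m_k})$ is exactly $[v_\mu]$. Second, the embedding $\imath_\mu$ is $G_t$-equivariant: $G_t$ acts on each $V_t(\om_k)=\Lambda^k (M_t)_k$ via $\Phi_t$, hence diagonally on the tensor product, and it preserves the Cartan component $V_t(\mu)$ as well as the quiver Grassmannian. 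Therefore the entire orbit $G_t\cdot[v_\mu]$ lies in $\imath_\mu({\rm Gr}_\mathbf{e}(M_t))$.

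Finally, any $\ell\in V_t(\mu)^*$ vanishing on $\imath_\mu({\rm Gr}_\mathbf{e}(M_t))$ satisfies $\ell(g\cdot v_\mu)=0$ for all $g\in G_t$. By the definition of $V_t(\mu)$ as $\mathrm{U}(\fa_t)\,v_\mu$ together with the fact that $\fa_t$ is the Lie algebra of $G_t$ (and $G_t$ is generated by the one-parameter subgroups $G_{a,b}$ coming from $\Phi_t$), the linear span of $G_t\cdot v_\mu$ equals $V_t(\mu)$. Hence $\ell\equiv 0$, proving the map is an embedding. The only mildly delicate point is the last equality $\sspan_{\bC}(G_t\cdot v_\mu)=V_t(\mu)$; this is standard once one knows that the associative algebra generated by the image of $\fa_t$ under the group exponentials coincides with $\mathrm{U}(\fa_t)$ acting on $V_t(\mu)$, which follows by differentiating the one-parameter subgroups $\mathrm{Id}+x\Phi_t(E_{a,b})$ at $x=0$.
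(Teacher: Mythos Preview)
Your proposal is correct and follows essentially the same approach as the paper: both realize the map as restriction of sections along $\imath_\mu$ and prove injectivity by showing that the $G_t$-orbit (the paper uses the $G_t^-$-orbit, invoking Remark~\ref{fn-}) through $[v_\mu]$ spans $V_t(\mu)$. The only difference is cosmetic---you work with the full group $G_t$ and supply the differentiation argument explicitly, whereas the paper simply asserts that the linear span of the open $G_t^-$-orbit is all of $V_t(\mu)$.
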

\begin{proof}
Recall the isomorphism $V_\mu(t)^*= \mathrm{H}^0(\bP(V_\mu(t)),\cO(1))$. Using the embedding $\imath_\mu$ we
consider the restriction map  
$$V_t(\mu)^*=\mathrm{H}^0(\bP(V_t(\mu)),\cO(1))\to \mathrm{H}^0({\rm Gr}_\mathbf{e}(M_t),\imath^*_\mu\cO(1))=
\mathrm{H}^0({\rm Gr}_\mathbf{e}(M_t),\cO_t(\mu)).$$
We claim that this map has no kernel. Indeed, if a section $s$ from $\mathrm{H}^0(\bP(V_t(\mu)),\cO(1))$ vanishes
on the quiver Grassmannian, in particular it vanishes on the open orbit of the group $G_t^-$.
However, the linear span of the vectors from this orbit coincides with the whole $V_t(\mu)$.
Hence, $s\in V_t(\mu)^*$ vanishes on $V_t(\mu)$.  
\end{proof}

\begin{theorem}
$\mathrm{H}^0({\rm Gr}_\mathbf{e}(M_t),\cO_t(\mu))^*\simeq V_t(\mu)$ as $\fa_t$-modules.
\end{theorem}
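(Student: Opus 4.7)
The plan is to combine the three ingredients already assembled in the paper: the injection of Lemma \ref{emb}, the dimension formula of Proposition \ref{flat}, and the lower bound of Corollary \ref{est}. These three facts together force all the dimensions to coincide and upgrade the injection to an isomorphism.

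More precisely, I would first dualize the injection from Lemma \ref{emb}. Since $V_t(\mu)$ is finite-dimensional, the embedding $V_t(\mu)^{*}\hookrightarrow \mathrm{H}^0({\rm Gr}_{\mathbf e}(M_t),\cO_t(\mu))$ yields a surjection
\[
\mathrm{H}^0({\rm Gr}_{\mathbf e}(M_t),\cO_t(\mu))^{*}\twoheadrightarrow V_t(\mu)^{**}\simeq V_t(\mu).
\]
On dimensions, Lemma \ref{emb} gives $\dim V_t(\mu)\le \dim \mathrm{H}^0({\rm Gr}_{\mathbf e}(M_t),\cO_t(\mu))$, while Proposition \ref{flat} computes the right-hand side to be $\dim V(\mu)$, the dimension of the irreducible $\msl_{n+1}$-module of highest weight $\mu$. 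On the other hand, Corollary \ref{est} gives the reverse inequality $\dim V_t(\mu)\ge \dim V(\mu)$. Thus all three numbers coincide, the injection of Lemma \ref{emb} is an isomorphism, and so is its dual.

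It remains to promote this to an isomorphism of $\fa_t$-modules. For this I would unwind the construction of the map in Lemma \ref{emb}: the identification $V_t(\mu)^{*}=\mathrm{H}^0(\bP(V_t(\mu)),\cO(1))$ is tautologically $G_t$-equivariant with respect to the natural linear $G_t$-action on $\bP(V_t(\mu))$, and the Pl\"ucker-type embedding $\imath_\mu:{\rm Gr}_{\mathbf e}(M_t)\hookrightarrow \bP(V_t(\mu))$ is $G_t$-equivariant by construction (since the quiver Grassmannian was realized as the closure of a $G_t^-$-orbit inside the $G_t$-stable Cartan component). Hence the restriction map on global sections is $G_t$-equivariant, and by differentiation it is $\fa_t$-equivariant, so dualizing preserves the $\fa_t$-module structure.

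The only step requiring any care is the equivariance check, since the $\fa_t$-action on $V_t(\mu)$ was defined via endomorphisms of $M_t$, while the action on $\mathrm{H}^0({\rm Gr}_{\mathbf e}(M_t),\cO_t(\mu))$ comes from the $G_t$-action on the Grassmannian coming from $\mathrm{Id}+x\Phi_t(E_{a,b})$; but these two origins are designed to match, so no genuine obstacle arises. The dimension pinch from the combination of Lemma \ref{emb}, Proposition \ref{flat}, and Corollary \ref{est} is therefore the whole content of the argument.
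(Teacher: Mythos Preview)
Your proposal is correct and follows exactly the paper's approach: dualize the injection of Lemma~\ref{emb} to a surjection, then use the dimension pinch from Proposition~\ref{flat} and Corollary~\ref{est} to conclude it is an isomorphism. Your added paragraph on $\fa_t$-equivariance is a welcome clarification that the paper leaves implicit.
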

\begin{proof}
Lemma \ref{emb} gives the surjection from the left hand side to the right hand side. Now   
Proposition \ref{flat} and Corollary \ref{est} imply the Theorem.
\end{proof}

\begin{corollary}
$\dim V_t(\mu)$ is equal to the dimension of the irreducible $\msl_{n+1}$-module of highest weight $\mu$.
\end{corollary}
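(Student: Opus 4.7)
The plan is to deduce this corollary as a direct dimensional consequence of the two results established immediately before it. The theorem just proved identifies $V_t(\mu)$ (up to duality) with the zeroth sheaf cohomology of the line bundle $\cO_t(\mu)$ on the quiver Grassmannian ${\rm Gr}_{\mathbf e}(M_t)$, and Proposition \ref{flat} pins down the dimension of this cohomology via flatness of the family.

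Concretely, the first step is to take dimensions on both sides of the isomorphism of $\fa_t$-modules
\[
\mathrm{H}^0({\rm Gr}_{\mathbf e}(M_t),\cO_t(\mu))^{*}\simeq V_t(\mu),
\]
which is the content of the preceding theorem. This gives
\[
\dim V_t(\mu)=\dim \mathrm{H}^0({\rm Gr}_{\mathbf e}(M_t),\cO_t(\mu)).
\]
The second step is to invoke Proposition \ref{flat}, which asserts that $\dim \mathrm{H}^0({\rm Gr}_{\mathbf e}(M_t),\cO_t(\mu))=\dim V(\mu)$ and that the higher cohomology vanishes. Chaining the two equalities yields precisely $\dim V_t(\mu)=\dim V(\mu)$, which is the Weyl-dimension of the irreducible $\msl_{n+1}$-module of highest weight $\mu$.

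There is essentially no obstacle at this stage: the substantive content lives in the preceding theorem (surjectivity of the restriction map together with the lower bound $\dim V_t(\mu)\geq \dim V(\mu)$ from Corollary \ref{est}, forcing equality) and in the flatness input of Proposition \ref{flat} (proved via upper semicontinuity in a flat family and the known PBW-degenerate case $t=0$ from \cite{FF}). The only thing worth double-checking in writing up is that the argument covers arbitrary $\mu$, not just regular ones: the embedding $\imath_\mu$ and hence the theorem were stated for regular $\mu$, so for non-regular $\mu$ one should note that $V_t(\mu)$ is still well-defined as a submodule of a tensor product of fundamentals, and the Minkowski-sum basis argument from Corollary \ref{est} together with the product structure reduces the non-regular case to the regular one (or one can observe directly that $\dim V_t(\om_k)=\binom{n+1}{k}$ from the explicit description of essential monomials in fundamental representations).
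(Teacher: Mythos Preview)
Your proof is correct and matches the paper's approach exactly: the paper states the corollary without proof, since it is immediate from the preceding theorem (giving $\dim V_t(\mu)=\dim\mathrm{H}^0({\rm Gr}_{\mathbf e}(M_t),\cO_t(\mu))$) together with Proposition~\ref{flat} (giving $\dim\mathrm{H}^0=\dim V(\mu)$). Your additional remark on the regular versus non-regular $\mu$ issue is a reasonable observation about a point the paper leaves implicit, though it is not needed for the corollary as stated there.
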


\end{document}